\newenvironment{Eq}[1][]{\begin{equation}\ifthenelse{\equal{#1}{}}{}{\tag{#1}}\begin{aligned}}{\end{aligned}\end{equation}\ignorespacesafterend}
\newenvironment{Eq*}[1][]{\begin{equation*}\ifthenelse{\equal{#1}{}}{}{\tag{#1}}\begin{aligned}}{\end{aligned}\end{equation*}\ignorespacesafterend}
\numberwithin{equation}{section}
\newtheorem{theorem}{Theorem}[section]
\newtheorem{proposition}[theorem]{Proposition}
\newtheorem{corollary}[theorem]{Corollary}
\newtheorem{btsz}{Bootstrap ansatz}
\newtheorem{lemma}[theorem]{Lemma}
\newtheorem{claim}[theorem]{Claim}
\theoremstyle{remark}%
\newtheorem{remark}{Remark}[section] 
\theoremstyle{definition}
\DeclareMathOperator{\I}{i}
\DeclareMathOperator{\fd}{d}	\renewcommand{\d}{\fd}
\DeclareMathOperator{\diag}{diag}
\newcommand{\const}{\mathrm{const}.}
\newcommand{\Roma}[1]{\uppercase\expandafter{\romannumeral#1}}
\newcommand{\varE}{{\mathfrak E}}
\newcommand{\varF}{\varE'}
\newcommand{\Ma}[1]{\max\{#1,1\}}
\newcommand{\Mi}[1]{\min\{#1,1\}}
\newcommand{\mm}{{\mathbf{m}}}
\newcommand{\bZ}{{\mathbf Z}}
\newcommand{\bL}{{\mathbf L}}
\newcommand{\bOmega}{{\boldsymbol \Omega}}
\newcommand{\bY}{{\mathbf Y}}
\newcommand{\rH}{{\mathcal H}}
\newcommand{\rHb}{{\underline{\mathcal H}}}
\newcommand{\rD}{{\mathcal D}}
\newcommand{\rR}{{\mathbb{R}}}
\newcommand{\rS}{{\mathcal{S}}}
\newcommand{\rN}{{\mathbb{N}}}
\newcommand{\rC}{{\mathbb{C}}}
\newcommand{\rDa}{{\rD_{U,U_*}^{V,\infty}}}
\newcommand{\rDb}{{\rD_{l;U,U_V}^{V,V_U}}}
\newcommand{\LD}{{\mathcal L}}
\newcommand{\hD}{{\hat D}}
\DeclareMathOperator{\divv}{div}\renewcommand{\div}{\divv}
\DeclareMathOperator{\Div}{Div}
\newcommand{\Lb}{{\underline L}}
\newcommand{\alphab}{{\underline\alpha}}
\newcommand{\sdiv}{{\slashed \div}}
\newcommand{\spartial}{{\slashed \partial}}
\newcommand{\snabla}{{\slashed \nabla}}
\newcommand{\sD}{{\slashed D}}
\newcommand{\svE}{{\slashed{\mathcal{E}}}}
\newcommand{\sJ}{{\slashed J}}
\newcommand{\sA}{{\slashed A}}
\newcommand{\sm}{{\slashed m}}
\newcommand{\sGamma}{{\slashed \Gamma}}
\newcommand{\rsdiv}{{\mathring \sdiv}}
\newcommand{\rsvE}{{\mathring \svE}}
\newcommand{\Alphab}{{\underline{\mathcal A}}}
\newcommand{\varC}{{\mathcal C}}
\newcommand{\INF}[1]{{\kl[#1\kr]^{(\infty)}}}
\newcommand{\INFt}[1]{{\kl[#1\kr]^{(\infty)}}}
\newcommand{\q}{{\mathbf q_0}}
\newcommand{\rF}{{\mathring F}}
\newcommand{\rrho}{{\mathring \rho}}
\newcommand{\rsigma}{{\mathring \sigma}}
\newcommand{\ralpha}{{\mathring \alpha}}
\newcommand{\ralphab}{{\mathring \alphab}}
\renewcommand{\tt}{{\tilde t}}
\newcommand{\tx}{{\tilde x}}
\newcommand{\tr}{{\tilde r}}
\newcommand{\tomega}{{\tilde\omega}}
\newcommand{\tu}{{\tilde u}}
\newcommand{\tv}{{\tilde v}}
\newcommand{\tU}{{\tilde U}}
\newcommand{\tsquare}{{\tilde \square}}
\newcommand{\tpartial}{{\tilde \partial}}
\newcommand{\tspartial}{{\tilde \spartial}}
\newcommand{\tD}{{\tilde D}}
\newcommand{\tOmega}{{\tilde \Omega}}
\newcommand{\tLb}{{\tilde\Lb}}
\newcommand{\tL}{{\tilde L}}
\newcommand{\tT}{{\tilde T}}
\newcommand{\tS}{{\tilde S}}
\newcommand{\tK}{{\tilde K}}
\newcommand{\te}{{\tilde e}}
\newcommand{\tA}{{\tilde A}}
\newcommand{\tsA}{{\tilde \sA}}
\newcommand{\tF}{{\tilde F}}
\newcommand{\tphi}{{\tilde \phi}}
\newcommand{\tPsi}{{\tilde \Psi}}
\newcommand{\talpha}{{\tilde \alpha}}
\newcommand{\tJ}{{\tilde J}}
\newcommand{\trS}{{\tilde\rS}}
\newcommand{\trH}{{\tilde\rH}}
\newcommand{\trHb}{{\tilde\rHb}}
\newcommand{\trD}{{\tilde\rD}}
\newcommand{\tbZ}{{\tilde\bZ}}
\newcommand{\kl}[1]{\mathopen{}\left#1}
\newcommand{\kr}[1]{\right#1}
\newcommand{\Low}[1]{\limits_{\substack{#1}}}
\renewcommand{\emph}{}
\crefname{equation}{}{}
\newcommand{\Tm}[1]{Theorem \ref{#1}}
\newcommand{\La}[1]{Lemma \ref{#1}}
\newcommand{\Sn}[1]{Section \ref{#1}}
\newcommand{\Cm}[1]{Claim \ref{#1}}
\newcommand{\Cy}[1]{Corollary \ref{#1}}
\newcommand{\Pn}[1]{Proposition \ref{#1}}
\newcommand{\Bz}[1]{Bootstrap ansatz \ref{#1}}
\newcounter{part0}[subsection]
\renewcommand{\part}[1][]{\noindent\refstepcounter{part0}{\bfseries Part \number\value{part0}:} #1.\par}
\newcounter{part1}[part0]
\newcounter{part2}[part1]
\title[MKG equation with scattering data]
{The Maxwell-Klein-Gordon equation with scattering data}
\author{Wei Dai}
\address{School of Mathematical Sciences\\ Zhejiang University of Technology\\ Zhejiang, China}
\email{daiw23@zjut.edu.cn}
\author{He Mei}
\address{School of Mathematical Sciences\\ Shenzhen University\\ Shenzhen 518060, China}
\email{meihe@szu.edu.cn}
\author{Dongyi Wei}
\address{School of Mathematical Sciences \\ Peking University\\ Beijing, China}
\email{jnwdyi@pku.edu.cn}
\author{Shiwu Yang}
\address{Beijing International Center for Mathematical Research\\ Peking University\\ Beijing, China}
\email{shiwuyang@math.pku.edu.cn}
\date{\today}
\begin{document}

\bibliographystyle{siam}

\begin{abstract}
It has been shown in \cite{MR4030741} that general large solutions to the Cauchy problem for the Maxwell-Klein-Gordon system (MKG) in the Minkowski space $\mathbb{R}^{1+3}$ decay like linear solutions. One hence can define the associated radiation field on the future null infinity as  the limit of $(r\underline{\alpha}, r\phi)$ along the out going null geodesics. In this paper, we show the existence of a global solution to the MKG system which scatters to any given sufficiently localized radiation field with arbitrarily large size and total charge. The result follows by studying the characteristic initial value problem for the MKG system with general large data by using gauge invariant vector field method. We in particular extend the small data result of He in \cite{MR4299134} to a class of general large data. 
\end{abstract}

\keywords{Maxwell-Klein-Gordon system; Characteristic initial value problem; Scattering}

\subjclass[2020]{35L05, 35Q61, 35Q70}

\maketitle

\section{Introduction}
\label{Sn:I}

The classical Maxwell-Klein-Gordon (MKG) equation describes the motion of a  charged particle driven by the electromagnetic field. Let $A$ be a real valued $1$-form on $\mathbb{R}^{1+3}$. The covariant derivative $D_\mu$ acts on a complex valued scalar field $\phi$ via the relation 
\begin{Eq*}
D_\mu \phi =\partial_\mu \phi+\I A_\mu \phi,\qquad \I:=\sqrt{-1}. 
\end{Eq*}
The commutator of this covariant derivative gives the Maxwell field $F$ with components
\begin{Eq*}
F_{\mu\nu}=\partial_\mu A_\nu-\partial_\nu A_\mu,
\end{Eq*}
which can also be viewed as the $2$-form $F=dA$. 
The MKG equation is  a system for the connection field $A$ and the scalar field $\phi$
\begin{Eq}
\label{Eq:MKGe-Fphi}
\begin{cases}
\partial^\mu F_{\mu\nu}=-J_\nu[\phi]:=-\Im(\phi\cdot\overline{D_\nu \phi}),\\
\square_A\phi:=D^\mu D_\mu \phi=0.
\end{cases}
\end{Eq}
Here and throughout the paper, raising and lowering the indices is with respect to the flat Minkowski metric $\mm=\diag\{-1, 1, 1, 1\}$.
The MKG system is one of the simplest models in gauge theory
and enjoys the gauge freedom,
that is, 
 $(A-\d\xi, e^{\I\xi}\phi)$ solves the same system with $(A,\phi)$ for any smooth enough function $\xi(t,x)$.
However, we may note that the Maxwell field $F$ is gauge invariant and the total charge for the particle 
\begin{Eq}\label{Eq:Do_q0} 
\q:=\frac{1}{4\pi}\int_{\rR^3}J_0[\phi]\d x=\frac{1}{4\pi}\int_{\rR^3}\partial^iF_{0i}\d x
\end{Eq}
is conserved, which in particular implies that the electric field $E=(F_{0i})_{i=1}^3$ has a nontrivial tail at the spatial infinity.

The Cauchy problem with initial data given on $\{t=0\}$ has been well studied. It has been shown in \cite{MR649158}, \cite{MR649159}, \cite{MR1271462}, \cite{MR2051613} that for sufficiently smooth initial data or even initial data in the energy space or below, the solution exists globally in time. One of the central themes  regarding the MKG system in the past decades is to investigate the long time dynamics of the solutions. Various decay estimates have been obtained for the solutions firstly in \cite{MR802944}, \cite{MR654042}  for sufficiently small and rapid decaying initial data (hence the total charge vanishes). As pointed above, the charge has a long range effect on the asymptotic behaviors of the solutions. An essential improvement was contributed by Lindblad-Sterbenz in \cite{MR2253534}, in which they obtained decay estimates for the solutions for general small initial data with nonzero total charge. This small data result has been extended to general large data recently in  \cite{MR4030741},  \cite{wei2022global}. More precisely, for initial data bounded in some weighted energy space, the solution verifies the pointwise decay estimate
\begin{Eq*}
|\underline{\alpha}|\leq C t^{-1},\quad |\phi|\leq Ct^{-1}.
\end{Eq*}
Here $\alphab=(F_{\Lb e_i})_{i=1,2}$ (see the precise definition in \Sn{Sn:P}). Now, for $u:=\frac{t-r}{2}$, $v:=\frac{t+r}{2}$, $r:=|x|$ and $\omega\in \mathbb{S}^2$, one can take the limit
\begin{equation}
\label{eq:radiation}
\Alphab(u, \omega)=\lim_{v\rightarrow\infty} (r\alphab),\qquad
\Phi(u, \omega)=\lim_{v\rightarrow\infty} (r\phi),
\end{equation}
which can be viewed as functions on the future null infinity. In particular, for data on the initial Cauchy hypersurface $\{t=0\}$ with total charge $\q$, we obtained the associated scattering data $(\Alphab, \Phi; \q)$ on the future null infinity in the sense of Friedlander \cite{MR583989}, \cite{MR1846782}, \cite{MR0460898}. It is then natural to ask for given $(\Alphab, \Phi; \q)$ on the future null infinity, whether there is a global solution to the MKG system such that the corresponding scattering data agree with $(\Alphab, \Phi; \q)$.

Such a problem is partially motivated by the above mentioned classical results of Friedlander for linear wave equations, in which he established the one to one correspondence between the Cauchy data and the scattering data in energy space for solutions of linear wave equations. 
After conformal transformation, the future null infinity becomes a conic light cone. 
The problem is then reduced to a characteristic initial value problem for the MKG system with data on a light cone.  
The study of the characteristic initial value problem for linear and nonlinear wave equations also attracts extensive attention. 
From a physical point of view, an observer is able to measure the initial data on his past light cone, 
but is not able to instantaneously measure the data on a nearby space-like surface, 
see for example the discussions in \cite{MR3528223}, \cite{MR3585918}. 
For some geometric wave equations like the Einstein equation in general relativity and Yang-Mills equations, 
the initial data has to satisfy the associated constraint equations,
which are nonlinear elliptic equations for the Cauchy problem while they are transport equations for the characteristic initial value problem. 
The easier constraint equation allows one to specify the initial data and helps to simplify the full problem. 
In the monumental work \cite{MR2488976} of Christodoulou (also see his early works \cite{MR860312}, \cite{MR848643}, \cite{MR1680551}), 
a type of short pulse data for the characteristic initial value problem for the vacuum Einstein equation leads to the dynamic formation of trapped surfaces for the spacetime as solution to the Einstein equation.

However, results for the characteristic initial value problem for general linear and nonlinear wave equation are less fruitful compared to the Cauchy problem, partially due to the lack of Fourier analysis or effective fundamental solution for the characteristic initial value problem. One of our main goals in this paper is to study the characteristic initial value problem for the MKG system. We then establish a scattering theory for the nonlinear model of MKG system for general large initial data on the future null infinity.

\subsection{Statement of the main results}
To state our main theorems,
let's define some necessary notations. 
Additional to the standard Cartesian coordinates $(t, x)$ of $\mathbb{R}^{1+3}$, we may also use the polar coordinate system $\{t, r=|x|, \omega=r^{-1}x\}$. Let $u=\frac{t-r}{2}$, $v=\frac{t+r}{2}$ and  $\Lb=\partial_u$, $L=\partial_v$. Then $\{\Lb,L,e_1,e_2\}$ forms a null frame with $\{e_1,e_2\}$  an orthonormal basis of the tangent space of the two sphere with radius $r$.
 
Denote $\mathcal{I}^+$ as the future null infinity, parameterized by  $(u, \omega)\in \mathbb{R}\times \mathbb{S}^2$. The scattering data as our initial data on $\mathcal{I}^+$ are a 1-form $\Alphab$ on the unit sphere for each fixed $u$ and a complex scalar field $\Phi$. Our goal is to find a solution $(F, \phi)$ to the MKG system \eqref{Eq:MKGe-Fphi} such that 
\eqref{eq:radiation} holds with $\alphab= (F_{\Lb e_i})_{i=1,2}$. To define the covariant derivatives for the scalar field on $\mathcal{I}^+$, 
we need to extract a 1-form on $\mathcal{I}^+$ which should be considered as the limitation of $A$. 
Let $[r^2\sigma]^{(\infty)}:=\lim_{v\rightarrow\infty}r^2\sigma$ with $\sigma:=F_{e_1 e_2}$.
We are able to calculate it through $\Alphab$ and the transport equation
\begin{equation}
\label{eq:trans:sigma}
\Lb[r^2\sigma]^{(\infty)}=-\rsdiv({}^*\Alphab),
\end{equation}
which is induced by \eqref{Eq:MKGe-ud_FZ},
where $\rsdiv$ is the sphere divergence and ${}^*$ is the Hodge star operator on $\mathbb{S}^2$.
Then, we see that 
\begin{Eq*}
\mathcal{F}^+:=[r^2\sigma]^{(\infty)}\rsvE+\d u \wedge \Alphab
\end{Eq*}
is a closed 2-form on $\mathcal{I}^{+}$, where $\rsvE$ is the volume form of $\mathbb{S}^2$. We hence can find a 1-form $B$ on $\mathcal{I}^{+}$ such that $\mathcal{F}^+=dB$. Now, we get the covariant derivatives $D_{u}:=\partial_u+\I B_u$, $D_{\omega}:=\partial_\omega+\I B_\omega$ on $\mathcal{I}^{+}$. 

Our first result concerns the continuation criteria for the characteristic initial value problem for the MKG system. For any $U_*\in\rR$, we define the following weighted energy $\|\cdot\|_{S\!N_{U_*}}$ for the data $(\Alphab,\Phi)$ on $\mathcal{I}^{+}$ 
\begin{Eq*}
\|(\Alphab,\Phi)\|_{S\!N_{U_*}}^2:=&\hspace{-5pt}\sum\Low{n\leq 2\\|\beta|\leq 6-2n}\hspace{-5pt}\int_{U_*}^\infty \int_{\mathbb{S}^2} \kl<u\kr>^{6+2n}|\partial_u^n \partial_\omega^\beta\Alphab|^2\d\omega\d u
+\hspace{-20pt}\sum\Low{n\leq 3\\|\beta|\leq 7-2\max\{n,1\}}\hspace{-20pt}\int_{U_*}^\infty \int_{\mathbb{S}^2} \kl<u\kr>^{4+2n}|D_u^n D_\omega^\beta\Phi|^2\d\omega\d u.
\end{Eq*} 
Here $\kl<u\kr>:=\sqrt{|u|^2+1}$. We emphasize here that the above norm is gauge invariant, which is independent of the choice of the connection field $B$ on $\mathcal{I}^{+}$.
\begin{theorem}
\label{Tm:Main1}
For any given scattering data $(\Alphab,\Phi)$ such that  
 $ \|(\Alphab,\Phi)\|_{S\!N_{U_*}}$ is finite for any $U_*\in \mathbb{R}$, there exists a solution $(F,\phi)$ to the MKG system \eqref{Eq:MKGe-Fphi} which scatters to $(\Alphab,\Phi)$ in the sense of \eqref{eq:radiation}. Moreover the solution is unique if for any $U_*\in \mathbb{R}$
\begin{Eq}
\label{eq:uniqueness}
\lim_{v\rightarrow\infty}\|(r\alphab,r\phi)\|_{S\!N_{U_*}'}=\|(\Alphab,\Phi)\|_{S\!N_{U_*}},
\end{Eq}
in which the weighted energy $\|(r\alphab,r\phi)\|_{S\!N_{U_*}'}$ is similarly defined on the light cone  $\{v=\const\}$. 
\end{theorem}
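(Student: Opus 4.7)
The plan is to reduce the scattering problem to a characteristic initial value problem (CIVP) and then approximate by finite-cone problems. I would first apply a conformal compactification near the future null infinity so that $\mathcal{I}^+$ is mapped onto (a portion of) an outgoing light cone $\{\tilde v=0\}$ in a new spacetime, with the rescaled scattering data $(\underline{\mathcal{A}},\Phi;\q)$ becoming characteristic initial data for a conformally related MKG-type system. The presence of the total charge $\q$ forces $\underline{\alpha}$ to have a nontrivial $r^{-2}$ tail in $u$, which will appear as a boundary singularity of the conformally rescaled data at $u\to-\infty$; this, together with the conic singularity of the cone, is what the weighted norm $\|\cdot\|_{S\!N_{U_*}}$ is tailored to handle.

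Once the problem is recast, I would solve a sequence of approximate CIVPs with data prescribed on truncated cones $\{v=V\}\cap\{U_*\le u\le U_V\}$, obtained by restricting (and smoothly cutting off) $(\underline{\mathcal{A}},\Phi)$ and integrating the transport equation \eqref{eq:trans:sigma} to recover a full initial data set $(F,\phi)$ on the finite cone. For each $V$ I would invoke the local/semi-global CIVP theory for MKG (a large-data version of He's framework in \cite{MR4299134}, established elsewhere in this paper) in a chosen gauge compatible with the null structure, producing a solution in the future domain of dependence. The bulk of the analysis is then to derive $V$-independent bounds on $(F,\phi)$ in a gauge invariant weighted norm whose trace on $\mathcal{I}^+$ controls $\|(\underline{\mathcal{A}},\Phi)\|_{S\!N_{U_*}}$. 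The estimates would be carried out via a bootstrap using the vector field method adapted to the characteristic setting: weighted $L^2$ energies against $\langle u\rangle$ weights for $\underline{\alpha}$, $\sigma$, $\alpha$, $\rho-\q r^{-2}$ and $\phi$, together with Morawetz-type multipliers, all conducted at the level of commuted quantities $\partial_u^n\partial_\omega^\beta$ and $D_u^n D_\omega^\beta$ matching the indices in $\|\cdot\|_{S\!N_{U_*}}$.

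With uniform weighted bounds in hand I would pass to the limit $V\to\infty$ by a standard diagonal/compactness argument, exploiting weak-$*$ limits on any precompact region and strong convergence of lower-order traces on finite truncated cones $\{v=V_0\}$ for fixed $V_0$; the limit $(F,\phi)$ solves MKG on the whole future. The identification of the radiation field, i.e.\ \eqref{eq:radiation}, follows because the weighted norms imply that $r\underline\alpha$ and $r\phi$ have well-defined limits along outgoing null geodesics, and because the approximating data were constructed precisely to converge to $(\underline{\mathcal{A}},\Phi)$ in the trace norm. For uniqueness under the energy-equality assumption \eqref{eq:uniqueness}, I would take two solutions scattering to the same data, form the difference, and run an energy estimate backward from a large-$v$ cone: the gauge invariant difference energy is controlled by the defect between $\|(r\underline\alpha,r\phi)\|_{S\!N_{U_*}'}$ and $\|(\underline{\mathcal{A}},\Phi)\|_{S\!N_{U_*}}$, which vanishes in the limit by hypothesis.

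The main obstacle I expect is the $V$-independent weighted energy estimate in the presence of the charge tail and the possibly large size of the data. The nonlinear terms $J_\nu[\phi]$ and $A_\mu\phi$ lose derivatives and contain borderline-decaying charge contributions through $\rho$, so closing the bootstrap requires a careful algebraic decomposition isolating the charge part $\q r^{-2}$, together with gauge invariant multipliers (as in \cite{MR4030741,wei2022global}) that avoid losing the $\langle u\rangle$-weights. The second subtle point is the behavior at the conic tip of the initial cone after compactification, where one must show that constructed solutions do not develop singularities propagating out of the vertex; this is where the specific higher-derivative count in $\|\cdot\|_{S\!N_{U_*}}$ (with $\max\{n,1\}$ rather than $n$ for $\Phi$) is used to trade a tangential derivative for an $L^\infty$ control near the vertex.
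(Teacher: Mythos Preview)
Your proposal conflates the proofs of Theorem~\ref{Tm:Main1} and Theorem~\ref{Tm:Main2}. For Theorem~\ref{Tm:Main1} the paper's route is far more direct than what you outline, and the ingredients you flag as the ``main obstacle'' (charge tail, $V$-independent weighted bootstrap, decomposition $\rho-\q r^{-2}$) simply do not enter.

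The paper's argument is: for each fixed $U_*$, the conformal inversion $\pi$ of Proposition~\ref{Pn:Toct} maps the region $\{u\ge U_*\}$ onto the \emph{bounded} truncated light cone $\trD_{0,1}^{0,1}$, carrying the portion $\{u\ge U_*\}$ of $\mathcal{I}^+$ to the finite cone $\trH_0^{0,1}=\{\tu=0,\,0\le\tv\le1\}$. The decay encoded in $\|(\Alphab,\Phi)\|_{S\!N_{U_*}}<\infty$ is exactly what makes the transformed data \emph{regular up to the vertex} $\tv=0$ (this is the computation \eqref{Eq:Eo_talpha_o_tH}--\eqref{Eq:Eo_tsAphi_o_tH} and Claim~\ref{Cm:Eo_tAphi_o_tH}). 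So after compactification there is no boundary singularity, no charge tail to isolate, and no need to approximate by finite-$V$ cones and extract limits: one has a genuine finite characteristic initial value problem. The paper then (i) proves local existence in a slab $\trD_{0,\delta}^{0,1}$ by a Picard iteration in Lorenz gauge, and (ii) extends to all of $\trD_{0,1}^{0,1}$ by restricting to a spacelike hyperboloid $\tilde\varC\subset\trD_{0,\delta}^{0,1}$, which $\pi^{-1}$ sends to a bounded piece of a constant-$t$ hyperplane, and invoking the large-data global Cauchy theory of Klainerman--Machedon \cite{MR1271462}. Finally one lets $U_*\to-\infty$ and patches by uniqueness. No weighted vector-field bootstrap is used anywhere in the proof of Theorem~\ref{Tm:Main1}.

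Two specific points in your write-up are off. First, the total charge $\q$ plays no role in Theorem~\ref{Tm:Main1}: the compatibility condition \eqref{Eq:crosd} is an assumption only of Theorem~\ref{Tm:Main2}, and in any case the charge manifests in $\rho$, not as a tail in $\alphab$. Second, your proposed limiting procedure $V\to\infty$ with uniform weighted estimates is essentially the machinery of Sections~\ref{Sn:Taierp1}--\ref{Sn:Taierp2}, which is what the paper uses to prove the \emph{quantitative} Theorem~\ref{Tm:Main2}; importing it here is both unnecessary and circular, since that machinery presupposes a solution already exists (it is a bootstrap on a known solution, not a construction). The step you are missing is the reduction, via global Cauchy theory on a hyperboloid inside the compactified slab, from local-in-$\tu$ existence to existence in the full $\trD_{0,1}^{0,1}$.
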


Our second result gives quantitative estimates for the solution $(F, \phi)$. Due to the existence of total charge $\q$, we need the following compatibility condition 
 \begin{Eq}
 \label{Eq:crosd}
\const\equiv\int_{-\infty}^\infty \Im\kl(\Phi\overline{D_u\Phi}\kr)- \rsdiv \Alphab\d u=:\q,\qquad
0\equiv \int_{-\infty}^\infty  \rsdiv ({}^*\!\Alphab)\d u.
\end{Eq}
This condition is implied by the asymptotic behaviors for the solutions for the Cauchy problem. Indeed the second condition follows from the transport equation \eqref{eq:trans:sigma} for the component $\sigma$ of the Maxwell field together with the pointwise decay properties of the solution obtained in \cite{MR4030741}. 
Similarly for the first identity, 
for $[r^2\rho]^{(\infty)}:=\lim_{v\rightarrow\infty}r^2\rho$ with $\rho:=\frac{1}{2}F_{\Lb L}$, we have the transport equation 
\begin{Eq*}
\Lb [r^2\rho]^{(\infty)}=\rsdiv\Alphab-\Im (\Phi \cdot \overline{D_{u}\Phi}),
\end{Eq*}
which is induced by equation \eqref{Eq:MKGe-ud_FZ}. 
The asymptotic behavior of $\rho$ then leads to the above compatibility condition \eqref{Eq:crosd}. See \Sn{Sn:PoCEo_FphiZ_oeni} for more detailed discussions.
 
For any constant $0\leq \varepsilon_1<1 $, we define the weighted energy for the initial scattering data
\begin{Eq*}
\mathcal{E}_{\varepsilon_1}=\|(\Alphab,\Phi)\|_{S\!N_{0}}^2+\hspace{-5pt}
\sum\Low{n\leq 2\\|\beta|\leq 6-2n}\hspace{-5pt}\int_{-\infty}^0 \int_{\mathbb{S}^2}   \kl<u\kr>^{2+2n-\varepsilon_1}|\partial_u^n \partial_\omega^\beta\Alphab|^2\d\omega\d u +
\hspace{-20pt}
\sum\Low{n\leq 3\\|\beta|\leq 7-2\max\{n,1\}}\hspace{-20pt}\int_{-\infty}^0 \int_{\rS^2}   \kl<u\kr>^{2n-\varepsilon_1}|D_u^n D_\omega^\beta\Phi|^2\d\omega\d u.
\end{Eq*} 
Then we have:
\begin{theorem}\label{Tm:Main2}
Assume that the initial scattering data $(\Alphab,\Phi)$ satisfy the above compatibility condition with total charge $\q$ and the weighted energy $\mathcal{E}_{\varepsilon_1}$ is finite for some constant $0\leq\varepsilon_1<1$. Then there exists a solution $(F, \phi)$ to the MKG equation \eqref{Eq:MKGe-Fphi} with this given radiation field $(\Alphab,\Phi)$  and a constant $C$ depending only on  $\mathcal{E}_{\varepsilon_1}$, $\q$, $\varepsilon_1$ and $\varepsilon_2$  such that the weighted energy on the Cauchy hypersurface $\{t=0\}$
\begin{Eq}
\label{Eq:Main2}
\sum_{|\beta|\leq 2}\int_{\rR^3}\kl<r\kr>^{2-\varepsilon_2+2|\beta|}\kl|\kl(DD^\beta\phi,\partial^\beta \rF\kr)\kr|^2\d x \leq C
\end{Eq}
 for all $\varepsilon_1<\varepsilon_2<1$. Here $\rF$ is the chargeless part of the Maxwell field 
\begin{Eq*}
\rF=F-F[\q]=F- \q r^{-2}\d t\wedge\d r\cdot{\bf 1}_{\{u\leq -1\}}.
\end{Eq*}
Moreover the solution is unique if we assume that the solution is continuous in the sense of \eqref{eq:uniqueness} for all $U_*$ as in theorem \ref{Tm:Main1}.
\end{theorem}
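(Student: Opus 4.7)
The plan is to produce the quantitative estimate \eqref{Eq:Main2} by propagating the weighted energies encoded in $\mathcal E_{\varepsilon_1}$ from the future null infinity back to the Cauchy hypersurface $\{t=0\}$, after first invoking \Tm{Tm:Main1} to produce a candidate solution $(F,\phi)$ together with its convergence in the sense of \eqref{eq:uniqueness}. Because the charge obstructs pointwise decay of $F$ in the exterior, I would first split off the Coulomb part by writing
\begin{Eq*}
F=F[\q]\cdot\mathbf{1}_{\{u\le-1\}}+\rF,
\end{Eq*}
so that the second identity in \eqref{Eq:crosd} together with the transport equations for $[r^2\rho]^{(\infty)}$ and $[r^2\sigma]^{(\infty)}$ make $\q$ in \eqref{Eq:Do_q0} the correct total charge and render $\rF$ chargeless. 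The vector field method is then applied gauge invariantly to $(\rF,\phi)$ in the null frame $\{\Lb,L,e_1,e_2\}$, with rotational Killing fields $\bOmega$ and a covariant version of the scaling/conformal vector field serving as commutators, paralleling the framework used on the Cauchy side in \cite{MR4030741}, \cite{wei2022global}.

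First, I would run a backwards characteristic construction: for each large $V$ regard the outgoing cone $\{v=V\}$ as a characteristic hypersurface, equip it with the induced data $(r\alphab,r\phi)|_{v=V}$ from \Tm{Tm:Main1}, and solve the MKG system towards the past. The continuity hypothesis \eqref{eq:uniqueness} guarantees that the weighted initial norm on $\{v=V\}$ converges to $\|(\Alphab,\Phi)\|_{S\!N_{U_*}}$, so the quantitative estimates will be stable as $V\to\infty$. In parallel, the second compatibility identity in \eqref{Eq:crosd} ensures that the potential $B$ on $\mathcal I^+$ with $dB=\mathcal F^+$ is globally defined modulo gauge, which gives an intrinsic meaning to the covariant derivatives $D_u^n D_\omega^\beta\Phi$ appearing in $\mathcal E_{\varepsilon_1}$.

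The heart of the proof is a bootstrap on weighted energies of the form
\begin{Eq*}
\sum_{|\beta|\le 2}\int\kl<r\kr>^{2-\varepsilon_2+2|\beta|}\kl|\kl(D D^\beta\phi,\partial^\beta\rF\kr)\kr|^2
\end{Eq*}
on the family of truncated cones $\{v=V_0\}\cap\{u\le U_*\}$, obtained by multiplier identities with vector fields of the form $\kl<u\kr>^{p}\Lb+\kl<v\kr>^{q}L$ together with the conformal multiplier, and commuted with $\bOmega$ and covariant $D_\Lb$. The nonlinear source $J_\nu[\phi]$ has the standard null structure, so the quadratic errors close provided the weights are compatible; the transition from the $\kl<u\kr>^{2+2n-\varepsilon_1}$ weights on $\mathcal I^+$ to the $\kl<r\kr>^{2-\varepsilon_2+2|\beta|}$ weights on $\{t=0\}$ is then executed via the identity $r=v-u$, which is exactly where the loss $\varepsilon_2>\varepsilon_1$ enters (it pays for integrating the $\kl<u\kr>^{-1-(\varepsilon_2-\varepsilon_1)}$ factor uniformly in $v$). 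Sending $V_0\to-\infty$, after letting $U_*\to-\infty$ first, yields \eqref{Eq:Main2}.

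The main obstacle will be closing this bootstrap uniformly in $V$ in the presence of the Coulomb tail: the cross terms $F[\q]\cdot D\phi$ in $\square_A\phi=0$ decay only like $r^{-2}$ and, multiplied by the $\kl<u\kr>^{2+2n-\varepsilon_1}$ weights, are just borderline non-integrable in $u$, which is precisely what forces the loss $\varepsilon_2>\varepsilon_1$. A secondary difficulty is the distributional error created by the cutoff $\mathbf{1}_{\{u\le-1\}}$ supported on the timelike cone $\{u=-1\}$; on this fixed cone I would estimate the jump locally using the trace of the bootstrap norm, so that it can be absorbed into the source terms for $\rF$. Once the bootstrap is closed, the quantitative uniqueness in \Tm{Tm:Main1} identifies the limit as the candidate solution and \eqref{Eq:Main2} follows by lower semicontinuity of the weighted norm.
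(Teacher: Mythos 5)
Your overall scaffolding (use \Tm{Tm:Main1} for existence, split off the Coulomb part $F[\q]\mathbf{1}_{\{u\le-1\}}$, and run a gauge-invariant weighted multiplier/bootstrap argument in the exterior to carry the $\mathcal E_{\varepsilon_1}$ data from $\mathcal I^+$ down to $\{t=0\}$) matches the paper's strategy, but the proposal is missing the one idea that makes the bootstrap close, namely how the error term produced by the large charge is actually absorbed. In any energy identity with a conformal-type multiplier the charge contributes a term of the schematic size $\q\iint v^{-1}\,\big(|u|^{-\varepsilon}v^{2}\,|r^{-1}\phi|\,|\hD_L\phi|\big)$, i.e.\ per unit $u$ it is only $O(\q v^{-1})$, and since on the lower part of the exterior one only has $v^{-1}\le |u|^{-1}$, Gronwall in $u$ gives a factor $e^{C\q\log}$-type growth that cannot be beaten by choosing $\varepsilon_2>\varepsilon_1$: the coefficient $\q$ is not small, so your claim that the loss $\varepsilon_2-\varepsilon_1$ ``pays for integrating the $\langle u\rangle^{-1-(\varepsilon_2-\varepsilon_1)}$ factor'' does not apply where $v\approx|u|$ — there is no such factor there. (Moreover, as the paper points out, the bulk terms of $r^p$/standard multipliers have the wrong sign when the data sit on $\mathcal I^+$ and an outgoing cone rather than on $\{t=0\}$, so your generic family $\langle u\rangle^{p}\Lb+\langle v\rangle^{q}L$ cannot simply be borrowed from the Cauchy-problem literature.)

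The paper resolves exactly this point by splitting the exterior along the hypersurface $v=|u|^{1+(\varepsilon_2-\varepsilon_1)/R}$ and using two different multipliers: above it, $\kappa K$ with $\kappa=|u|^{-\varepsilon_1}$, where $v^{-1}\le|u|^{-1-(\varepsilon_2-\varepsilon_1)/R}$ supplies both integrability in $u$ and smallness once $|U_*|$ is large; below it, $\kappa K$ with $\kappa=|u|^{-R-\varepsilon_2}v^{R}$, whose own bulk term carries an explicit factor $R$ ($-u^2L\kappa/2=-\tfrac{R}{2}|u|^{2-R-\varepsilon_2}v^{R-1}$), so that taking $R$ large depending only on $\q$ lets the left-hand side absorb the charge term $C_\q\iint|u|^{2-R-\varepsilon_2}v^{R-1}|r^{-1}\phi||\hD_L\phi|$. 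This is also where the loss $\varepsilon_1<\varepsilon_2$ genuinely enters (through the splitting exponent $(\varepsilon_2-\varepsilon_1)/R$ and the weight $|u|^{-R-\varepsilon_2}v^{R}$), not through the transition of weights at $\{t=0\}$. Without this two-region/two-weight mechanism (or a substitute for it), your bootstrap does not close near the lower boundary of the exterior region, and the estimate \eqref{Eq:Main2} for the full range $\varepsilon_1<\varepsilon_2<1$ is not reached. Two smaller points: the hypersurface $\{u=-1\}$ is null, not timelike, and in the paper's scheme the cutoff $\mathbf{1}_{\{u\le-1\}}$ creates no distributional source because the exterior estimates are run entirely in $\{u\le U_*\le-1\}$ while for $r\lesssim V_*$ one uses the conformal (interior) bounds with $\langle r\rangle\approx1$; also, before any exterior estimate one must first establish the commuted limits and fluxes on $\mathcal I^+$ and on $\rH_{U_*}$ (the paper's Claim \ref{Cm:Eo_FphiZ_oeni} and Section \ref{Sn:Ado_H}), which your proposal takes for granted.
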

A couple of remarks are in order.

\begin{remark}
  The boundedness of the weighted energy on the Cauchy hypersurface $\{t=0\}$ is sufficient to conclude the pointwise decay estimates for the solutions near the spatial infinity.
\end{remark}

\begin{remark}
 The decay assumption on the scattering data at time infinity ($u\rightarrow +\infty$) ensures that the data on the conic light cone after a conformal transformation is regular up to the conic point. This assumption will be significantly improved in our forthcoming paper \cite{DaiYang2024} by studying the MKG system with singular data on the initial light cone. 
\end{remark}

\begin{remark}
 The scattering data problem  studied here is closely related to the theory of scattering for linear and nonlinear fields, see discussion for example in \cite{MR0217440}. Combined with the decay estimates for the Cauchy problem, we can establish a map from the scattering data on the future null infinity to that on the past null infinity. However, we lose a little bit regularity ($\varepsilon_1<\varepsilon_2$) due to the existence of large total charge $\q$. 
\end{remark}

\subsection{Related results}
 The most related work is the small data result obtained by He in \cite{MR4299134}, in which the scattering data are given for the scalar field and the connection field in Lorenz gauge. Under this particular gauge condition, the scalar field can not be freely assigned as there is a phase correction based on the asymptotic behaviors for the solutions for the Cauchy problem in \cite{MR3936130}, \cite{MR2253534}. 
Another related work was given by Wang in \cite{wang2010radiation}. In this work, he considered the scattering problem for Einstein equations in harmonic coordinates, and also used the conformal compactification method and discussed the characteristic initial value problem, see also \cite{wang2014} for the higher dimension cases.  
General nonlinear wave equation verifying the null condition or weak null condition with small scattering data on the future null infinity has been studied earlier by Lindblad-Schlue in \cite{lindblad2017scattering}. Another deep and remarkable result is the construction of a large class of black hole spacetimes as solutions to the vacuum Einstein equation with scattering data on the  future null infinity and the event horizon of a fixed Schwarzschild spactime in \cite{dafermos2013scattering}  by Dafermos-Holzegel-Rodnianski. For scattering theory for linear fields on various black hole spacetimes, we refer to the recent progresses in \cite{MR2070126}, \cite{MR3494169}, \cite{MR3798305}, \cite{MR4166630} and references therein. 

Scattering theory for nonlinear fields with large data is in general more difficult. 
Baez-Segal-Zhou in \cite{MR1073286}  constructed the scattering operator for the cubic defocusing semilinear wave equation in the energy space in Minkowski space, which has later been extended  to non-stationary asymptotic flat backgrounds by Joudioux in \cite{MR2910977}, \cite{MR4109802}.  Similar result has been obtained by Baskin-Barreto in \cite{MR3324913} for the energy critical defocusing semilinear wave equations. We remark here that the good sign and  the superconformal power (see discussions for the Cauchy problem in \cite{MR4411879}) of the nonlinearity is of particular importance to establish the scattering theory. Hence the MKG system studied here provides another nonlinear model for which  one can find a continuous map from a class of scattering data on the future null infinity to that on the past null infinity.

There are mainly two types of initial hypersurfaces for characteristic initial value problem for nonlinear wave equations: a characteristic null cone and two transversely  intersecting null hypersurfaces, one of which may be part of the null infinity. Local existence and uniqueness of the solutions for general quasilinear wave equations with smooth data on a characteristic null cone were shown by Cagnac in \cite{MR648323}.  Then Rendall in \cite{MR1032984} extended this  result to two intersecting null hypersurfaces by using  a different approach with smooth data. Generalizations on these classical results for general nonlinear wave equation could be found for example in \cite{MR1079777}, \cite{dossa1992problemes}, \cite{MR1267067}, \cite{MR1851621}, \cite{MR2168739}, \cite{MR2153385}. We emphasize here that the local existence result means that the solution exists in a small neighborhood of the conic point or the intersecting sphere of the two null hypersurfaces. Local existence in a uniform neighborhood of the null hypersurfaces as needed in our case (see details in the next subsection) holds true under certain conditions on the nonlinearity (\cite{MR2124075}, \cite{MR2475332}, \cite{MR3528223},  \cite{MR3974128}).
 
An important application of the above local existence results is to the Einstein equation in general relativity. Local existence in a neighborhood of conic light cone was shown in \cite{MR2785136}, \cite{MR3129312}. For the case of two intersecting null hypersurfaces or data on part of the past null infinity, we refer to the discussions in  \cite{MR661723}, \cite{MR2719376}, \cite{MR3116325} and references therein. The Einstein equation under certain gauge condition is a complicated system of quasilinear wave equation for the metric components, which does not verify the conditions in \cite{MR2124075}, \cite{MR2475332}. However Luk in \cite{MR2989616} was able to extend the local existence result in a neighborhood of the intersecting two sphere to the uniform future of the characteristic initial null cones, see further discussions and applications in \cite{MR4148409}, \cite{MR3851745}, \cite{MR3318018}, \cite{MR3758993}.

\subsection{Strategy for the proof}
In this subsection, we sketch the main ideas and novelties for the proof. For the Cauchy problem investigated in \cite{MR4030741}, the solution is first constructed in the exterior region out side of a forward light cone $\{u\leq U_*\}$. In this region, the difficulty is the long range effect of the large total charge. But one can choose  $U_*$ large enough such that the chargeless part of the data are sufficiently small. Then inside the light cone, the conformal compactification method was applied to show the decay estimates for the solutions. For the inverse problem studied in this paper, the strategy is to construct the solution in the reverse order.

We first need to show the existence of the solution in a neighborhood of the time infinity. Similar to the Cauchy problem, we can make use of the conformal structure of the MKG system in Minkowski space $\mathbb{R}^{1+3}$. For some $U_*\in \mathbb{R}$, the region $\{(t,x):u>U_*\}$ in $\mathbb{R}^{1+3}$ can be conformally compactified to a region bounded by two light cones as shown in the following picture:
\begin{figure}[H]
\centering
\includegraphics[width=0.5\textwidth]{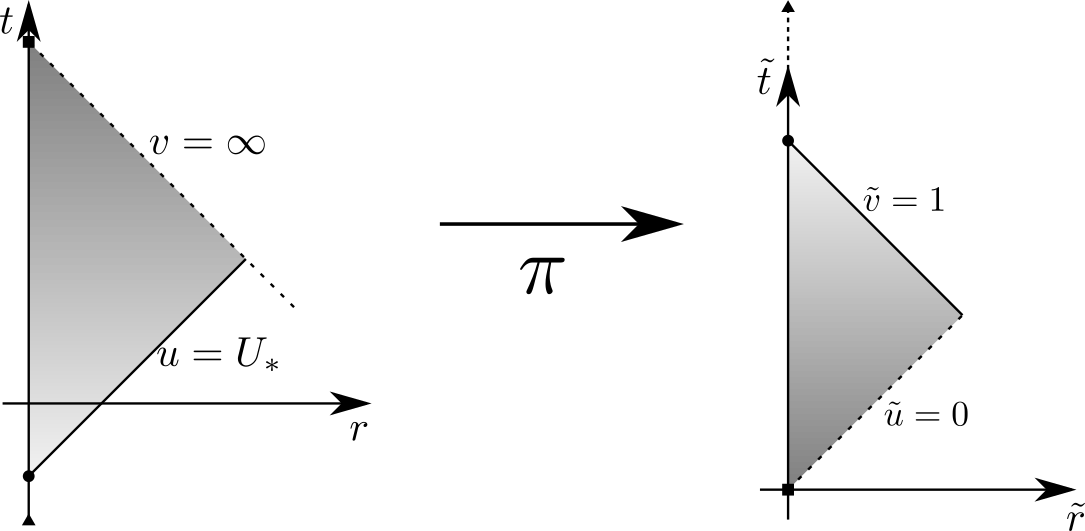}
\end{figure}
Here we may note that the part of the future null infinity $\{v=\infty\}$ is mapped to $\{\tilde{u}=0\}$ and the light cone $\{u=U_*\}$ corresponds to $\{\tilde{v}=1\}$. We are then led to study the characteristic initial value problem for the MKG system with data on the conic light cone $\{\tilde{u}=0\}$. Our assumption on the scattering data on the future null infinity implies that the data on the cone $\{\tilde{u}=0\}$ is sufficiently regular. The difficulty now is to show the existence of the solution to the MKG system in the maximal Cauchy development $\{\tilde{u}\geq 0, \tilde{v}\leq 1\}$ with general  sufficiently regular data on the light cone $\{\tilde{u}=0,  \tilde{v}\leq 1\}$.  The idea is to first show that the solution exists in a neighborhood of the light cone $\{\tilde{u}=0\}$, that is, for some constant $\epsilon>0$ there is a unique solution to the MKG system in the region $\{0\leq \tilde{u}\leq \epsilon,  \tilde{v}\leq 1\}$. This will be carried out by studying the MKG system under the Lorenz gauge and the proof is inspired by the work \cite{MR2989616} of Luk for the local existence result for the characteristic initial value problem for the Einstein equations. 

To extend the local solution to a global one in the whole region $\{0\leq \tilde{u}\leq 1, \tilde{v}\leq 1\}$, we rely on the classical results \cite{MR649158}, \cite{MR649159} of Eardley-Moncreif for the Cauchy problem (also see the work \cite{MR1271462} of Klainerman-Machedon). We can extract a hyperboloid which is space like in the region $\{0\leq \tilde{u}\leq \epsilon, 0\leq \tilde{v}\leq 1\}$. Then the solution exists in the maximal Cauchy development of this hyperboloid as long as the solution is sufficiently regular on the hyperboloid. This is standard, see details in \Sn{Sn:Taiir}. %

 The above discussion also implies that the solution exists in the full Minkowski space, that is, Theorem \ref{Tm:Main1} holds. But we do not have quantitative control on the solution as $U_*$ goes to $-\infty$. The existence of nonzero total charge does not allow us to use the conformal mapping method in the whole Minkowski space. To show that the solution is also bounded in some weighted energy space on any Cauchy hypersurface as stated in Theorem \ref{Tm:Main2}, we turn to gauge invariant vector field method. First we can choose $-U_*$, $V_*$ large enough such that the data on the later part of the future null infinity $\{u\leq U_*, v=\infty\}$ and the out going null cone $\{u=U_*, v\geq V_*\}$ are sufficiently small. It is then sufficient to study the solutions in the exterior region $\{u\leq U_*, v\geq V_*\}$. To see the new difficulties, let's review the $r$-weighted energy identity which played a significant role for the Cauchy problem in the region $\mathcal{D}$ bounded by the out going null hypersurface $\mathcal{H}_{U_*}$, the in coming null hypersurface $\underline{\mathcal{H}}_{V_*}$ and the Cauchy hypersurface $\{t=0\}$  for all $0\leq p\leq 2$
\begin{align*}
  &\iint_{\mathcal{D}}r^{p-1}\left(p(|\hat{D}_L\phi|^2+|\alpha|^2)+(2-p)(|\sD\phi|^2+|\rrho|^2+ \sigma^2)\right)dxdt\\
&+\int_{\mathcal{H}_{U_*} }r^{p+2}(|\hat{D}_L\phi|^2+| \alpha|^2)dvd\omega+\int_{\underline{\mathcal{H}}_{V_*} }r^{p+2}(|\sD\phi|^2+|\rrho|^2+\sigma^2)dud\omega\\
=&\frac{1}{2}\int_{\{t=0\}} r^p(|\hat{D}_L\phi|^2+|\sD\phi|^2)+r^{p+2}(|\alpha|^2+|\rrho|^2+\sigma^2)drd\omega-\iint_{\mathcal{D}}  \q r^{p-2} \Im(\phi\cdot \overline{D_L\phi}) dxdt.
\end{align*}
Here $\hat{D}_L\phi=D_L\phi+r^{-1}\phi $ and $\rrho=\rho-\q r^{-2}$. We still have the same issue to control the error term arising from the existence of large total charge $\q$ (the last term with indefinite sign).  The bulk term in the first line contributes a good sign for the Cauchy problem when the data are assigned on $\{t=0\}$ while it becomes negative for the characteristic initial value problem with data on $\mathcal{H}_{U_*} $ and the future null infinity (the limit of $\underline{\mathcal{H}}_{V_*}$ as $V_*$ goes to infinity). This means that the robust $r$-weighted energy method introduced by Dafermos-Rodnianski in \cite{MR2730803} may not work for the characteristic initial value problem considered here.

In our paper, we first adopt the weighted conformal vector field $|u|^{-\varepsilon_1} K$, where $K=2u^2 \Lb+2 v^2 L$ has been used in \cite{MR4299134} for the small data case.
Then, we apply the energy method to the region bounded by $\rH_{U_*}$, $\rH_{U}$, $\rHb_{\infty}$ and $\rHb_{V}$ with $U<U_*<0$ and $V_*<V$, and get that
\begin{Eq*}
&\int_{\mathcal{H}_{U} }|u|^{-\varepsilon_1}  v^2 |\hat{D}_L\phi|^2
\leq C(\mathcal{E}_{\varepsilon_1}) +C_\q\int_{U}^{U_*}\int_{\rH_u} v^{-1}\kl(|u|^{-\varepsilon_1}v^2 |r^{-1}\phi||\hD_L\phi|\kr)\d u+else.
\end{Eq*}
Here we only show this reduced version of energy estimate,
since the error term arising from the nonzero total charge depends only on the scalar field, 
where we have omitted the volume element for simplicity.
As discussed in \cite{MR3758434}, 
integrating from the future null infinity, 
we can bound $\phi$ in terms of $\hD_{L}\phi$, 
that is, we can view  $|r^{-1}\phi|$ as $|\hD_{L}\phi|$.
However, such error term still can not be absorbed since we only have $v^{-1}\leq |u|^{-1}$ and that $\q$ is not small.
Now, the key observation is that this error term could be controlled by using Gronwall's inequality if we restrict the above identity to the region $v\geq |u|^{1+\varepsilon_3}$ for some $\varepsilon_3>0$. 
Then, noticing $v^{-1}\leq |u|^{-1-\varepsilon_3}$ and $|u|\geq |U_*|$, the $v^{-1}$ in error term can provide integrability and smallness simultaneously, for $|U_*|$ large enough. See \Sn{Sn:Taierp1} for the detailed discussion. 

For the remaining part where $v\leq |u|^{1+\varepsilon_3}$, we change the weighted multiplier to $|u|^{-R-\varepsilon_2}v^{R} K$ for some large constant $R$ and apply it to the region $\mathcal{D}$ bounded by $\{v=|u|^{1+\varepsilon_3}\}$ with $\varepsilon_3:=({\varepsilon_2-\varepsilon_1})/{R}$, $\rHb_{V} $ and $\rH_{U}$ as depicted below:
\begin{figure}[H]
\centering
\includegraphics[width=0.3\textwidth]{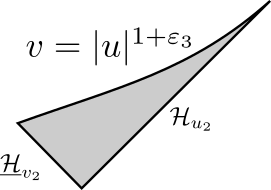}
\end{figure}
Noticing that the previous step has also shown that the weighted energy on the space like hypersurface $\{v=|u|^{1+\varepsilon_3}\}$ is finite. We then can derive the energy estimate
\begin{align*}
R\iint_{\mathcal{D}}   |u|^{2-R-\varepsilon_2}v^{-1+R} |\hD_{\Lb}\phi|^2
\leq C(\mathcal{E}_{\varepsilon_1}) +C_\q\iint_{\rD} |u|^{2-R-\varepsilon_2}v^{-1+R}|r^{-1}\phi||\hD_L\phi|+else,
\end{align*}
where the factor $R$ in front of the bulk term comes from the skillfully selection of multiplier.
Again $r^{-1}|\phi|$ is comparable to $|\hat{D}_L\phi|$.
Now, for sufficiently large constant $R$, depending only on the total charge $\q$, the error term arising from the total charge can be absorbed by the bulk term on the left hand side. See \Sn{Sn:Taierp2} for detailed discussion.

\subsection{Organization of the paper}
The paper is organized as follows.
In \Sn{Sn:P} we make some preparation which includes notations, basic calculations, energy method, and some induced formulas of MKG equations.
In \Sn{Sn:Taiir} we consider the MKG equation in the interior region and conclude   \Tm{Tm:Main1}. The argument in the exterior region is split into two part.   \Sn{Sn:Taierp1}
is devoted to the study of the solutions in the upper exterior region where $v\geq |u|^{1+\varepsilon_3}$ and   \Sn{Sn:Taierp2} gives the proof for the solution in the lower exterior region.   The last   \Sn{Sn:A} collects some computations and lemmas used in the proof.

\section{Preparation}\label{Sn:P}
\subsection{Notation}
In this subsection, we list some notations that will be used throughout the paper.

In this paper, $C$ and $C(\omega)$ are used to denote a positive constant and a bounded function respectively, 
depending on their indices, 
which may change from line to line. 
The convention $x\lesssim y$ and $y\gtrsim x$ mean $x\leq Cy$,
and $x\approx y$ means $x\lesssim y\lesssim x$.

We use $\Div$ as the time-space divergence, $\div$ as the space divergence,
and $\spartial$ (and similarly for $\snabla$, $\sD$, etc.,) to represent the spherical projection of $\partial$.
Besides, we shall mainly use the derivatives with respect to two sets of vector fields. They are $\mathcal{Y}:=\{\Lb,L\}\cup\{\Omega_{ij}\}$ and $\mathcal{Z}:=\{T,S,K\}\cup \{\Omega_{ij}\}$, where
\begin{Eq*}
\Omega_{ij}=&x_i\partial_j-x_j\partial_i=rC_{ij}^k(\omega)e_k,&\quad T=&\partial_t=\frac{1}{2}(\Lb+L),\\
S=&t\partial_t+r\partial_r=u\Lb+vL,&\quad K=&(t^2+r^2)\partial_t+2tr\partial_r=2(u^2\Lb+v^2L).
\end{Eq*}

The order of derivatives are sometimes important in this paper.
So we use the bold symbol, like $\bZ$ (and similarly for $\bY$, $\bOmega$, etc.,), 
to denote a collection of derivatives $(Z_1,Z_2,...,Z_m)$,
where $Z_i$ belongs to its corresponding set $\mathcal{Z}$ (or $\mathcal{Y}$, $\{\Omega_{ij}\}$, etc.,).
We use $\chi_{X}(\bZ)$ to denote the amount of derivatives in $\bZ$ that belongs to $X$, 
and $\chi(\bZ)$ to denote the total amount of derivatives in $\bZ$.
For example, setting $\bZ=(\Omega_{12},K,\Omega_{31})$,
then we have $\chi_\Omega(\bZ)=2$, $\chi_K(\bZ)=1$ and $\chi(\bZ)=3$.
Finally, we define $\bZ=(Z_1,Z_2,...,Z_m)$ acting on any $f$ as $Z_1Z_2...Z_mf$ (similarly for $D_{\bZ}$, $\LD_{\bZ}$, etc.,).
We also use the convention $\partial^{\leq n}$ 
(and similar for $\Omega^{\leq n}$, $\spartial^{\leq n}$, etc.,)  to
stand for the collection $\kl\{\partial^{\beta}\kr\}_{|\beta|\leq n}$.

For any $2$-form $G=G_{\mu\nu}\d x^\mu \d x^\nu$, we define its null decomposition
\begin{Eq*}
\rho[G]:=\frac{1}{2}\Lb^\mu L^\nu G_{\mu\nu},\quad \alphab[G]_i:=\Lb^\mu e_i{}^\nu G_{\mu\nu},\quad
\alpha[G]_i:=L^\mu e_i{}^\nu G_{\mu\nu},\quad \sigma[G]_{ij}:=e_i{}^\mu e_j{}^\nu G_{\mu\nu}.
\end{Eq*}
Note that $(\sigma[G]_{ij})$ can be considered as a spherical $2$-form. 
Thus, we have $\sigma[G]_{ij}=\sigma[G]\svE_{ij}$ 
where $\svE$ is the volume form of its sphere 
and $\sigma[G]:=2^{-1}\sigma[G]_{ij}\svE^{ij}$.

We use the shorthands $\rho$, $\alphab$, etc., while $G=F$; 
$\rrho$, $\ralphab$, etc., while $G=\rF$;
$\rho^{(\bZ)}$, $\alphab^{(\bZ)}$, etc., while $G=F^{(\bZ)}$, where $G^{(\bZ)}:=\LD_\bZ G$;
and $\rrho^{(\bZ)}$, $\ralphab^{(\bZ)}$, etc., while $G=\rF^{(\bZ)}$.
For the scalar field part, we use the shorthand $f^{(\bZ)}:=\hD_{\bZ}f$ where $\hD f:=r^{-1}D(rf)$.
Finally, we also use the shorthand $J^{(\bZ)}:=r^{-2}\LD_\bZ(r^2J)=r^{-2}\LD_{\bZ}\Im\big((r\phi)\cdot\overline{D(r\phi)}\big)$.

In this paper, we will mainly consider the region that corresponds to the light cone.
Under this consideration, we define
\begin{Eq*}
\rS_{U}^V:=&\{(t,x):u=U,~v=V\},\quad&
\rH_{U}^{V_1,V_2}:=&\{(t,x):u=U,~V_1\leq v\leq V_2\},\\
\rHb_{U_1,U_2}^{V}:=&\{(t,x):U_1\leq u\leq U_2,~v=V\},\quad&
\rD_{U_1,U_2}^{V_1,V_2}:=&\{(t,x):U_1\leq u\leq U_2,~V_1\leq v\leq V_2\}.
\end{Eq*}
We also define $\rD_{+;U_1,U_2}^{V_1,V_2}:=\rD_{U_1,U_2}^{V_1,V_2}\cap\{t\geq 0\}$.
When considering the integral in such regions, we omit the usual volume element.
For example, we denote
\begin{Eq*}
\int_{\rS_U^V}f:=\kl.\int_{\rS^2}f\cdot r^2\d\omega\kr|_{u=U,v=V}.
\end{Eq*}

\subsection{Basic calculation results}
In this subsection, we give some basic calculation results that will be used later.

Firstly, we record here the standard formulas of the \emph{Levi-Civita} connection $\nabla$ with respect to the null frame.
\begin{Eq*}
&\nabla_\Lb(\Lb,L,e_i)=\nabla_L(\Lb,L,e_i)=0,\quad&
&\nabla_{e_i}\Lb=-\nabla_{e_i}L=-r^{-1}e_i,\\
&\nabla_{e_i}e_j=\sGamma^{k}{}_{ij}e_k+\frac{1}{2r}\sm_{ij}(\Lb-L),
\end{Eq*}
where $\sm$ is the induced sphere metric of $m$ and $\sGamma$ is the sphere frame-Christoffel symbol.
Using them, we easily know
\begin{Eq*}
\rho=&\frac{1}{2}(\Lb A_L-L A_\Lb),&\qquad \alphab_i=&r^{-1}\Lb(rA_{e_i})-e_i A_\Lb,\\
\alpha_i=&r^{-1}L(rA_{e_i})-e_i A_L,&\qquad \sigma=&e_1A_{e_2}-e_2A_{e_1}+(\sGamma^i{}_{21}-\sGamma^i{}_{12})A_{e_i}.
\end{Eq*}

Next, we consider the Lie derivative of $F$ in $\{u\leq -1\}$.
We can easily calculate that $\LD_ZF[\q]=0$ for any $Z\in \mathcal{Z}$.
It means that excepting $\rho=\rrho+\rho_0$ with $\rho_0:=r^{-2}\q$,
there are $\kl(\alphab^{(\bZ)},\rho^{(\bZ)},\sigma^{(\bZ)},\alpha^{(\bZ)}\kr)=\kl(\ralphab^{(\bZ)},\rrho^{(\bZ)},\rsigma^{(\bZ)},\ralpha^{(\bZ)}\kr)$ 
 for any $\bZ$.

Then, we consider the communicator.
The only non-vanishing $[Y_1,Y_2]$ for $Y_1,Y_2\in \mathcal{Y}$ are 
\begin{Eq*}
&[\Omega_{i_1i_2},\Omega_{j_1j_2}]=C_{i_1i_2j_1j_2}^{k_1k_2}\Omega_{k_1k_2}.
\end{Eq*}
The non-vanishing $[Z_1,Z_2]$ for $Z_1,Z_2\in \mathcal{Z}$ excepting $[\Omega,\Omega]$ are 
\begin{Eq*}
&[T,S]=T,\quad [T,K]=2S,\quad [S,K]=K.
\end{Eq*}
Besides, we also have
\begin{Eq}\label{Eq:CofdaZ}
[\Lb,T]=&0,&\quad[\Lb,S]=&\Lb,&\quad [\Lb,K]=&4u\Lb,&\quad [\Lb,\Omega_{ij}]=&0,\\
[L,T]=&0,&\quad[L,S]=&L,&\quad [L,K]=&4vL,&\quad [L,\Omega_{ij}]=&0,\\
[e_i,T]=&0,&\quad[e_i,S]=&e_i,&\quad [e_i,K]=&2te_i,&\quad [e_i,\Omega_{jk}]=&C^l_{ijk}(\omega) e_l.
\end{Eq}
Excepting these, for the covariant derivative $D$, we have
\begin{Eq}\label{Eq:Cocd}
[D_\Lb,D_L]=2\I\rho,\quad [D_\Lb,D_{\Omega_{ij}}]=\I C_{ij}^k(\omega)r\alphab_k,\quad [D_L,D_{\Omega_{ij}}]=\I C_{ij}^k(\omega)r\alpha_k.
\end{Eq}
Finally, using the relation between \emph{Lie} derivatives and the usual derivatives, we can find that for any $N\in\rN_0$,
\begin{Eq}\label{Eq:EbudaLd}
\big|\Omega^{\leq N}\alphab[G]\big|\approx& \sum_{\chi(\bOmega)\leq N}\big|\alphab[\LD_\bOmega G]\big|,
\end{Eq}
and similar for that which $\alphab$ replaced by $\rho$, $\sigma$ or $\alpha$.


\subsection{The multiplier vector fields and energy equality}\label{Tmvfaee}
For any closed $\rR$-valued $2$-form $G$ and $\rC$-valued scalar field $f$, 
their momentum $2$-tensor $T[G,f]$ in $(\rR^{1+3},\mm)$ is defined by
\begin{Eq*}
T[G,f]_{\mu\nu}:=G_{\mu\gamma}G_{\nu}{}^\gamma-\frac{1}{4}m_{\mu\nu}G_{\gamma\iota}G^{\gamma\iota}+\Re(\overline{D_\mu f}D_\nu f)-\frac{1}{2}m_{\mu\nu}\overline{D^\gamma f}D_\gamma f.
\end{Eq*}
Given an $\rR$-valued function $\chi$ and $\rR$-valued vector fields $W$ and $X$, we can define the associated current,
\begin{Eq*}
{}^{(X)}\tilde J[G,f]_\mu:=&T[G,f]_{\mu\nu}X^\nu-\frac{1}{2}|f|^2\partial_\mu\chi+\frac{1}{2}\chi\partial_\mu(|f|^2)+W_\mu.
\end{Eq*}
Then, we are able to calculate that
\begin{Eq*}
\Div\kl( {}^{(X)}\tilde J[G,f]\kr)
=&X_\nu G^{\nu\gamma}\nabla^\mu G_{\mu\gamma}+\Re(\overline{\square_A f}(D_Xf+\chi f))+X^\nu\Im(f\overline{D^\gamma f})F_{\nu\gamma}\\
&+T[G,f]_{\mu\nu}{}^{(X)}\pi^{\mu\nu}-\frac{1}{2}|f|^2\square\chi+\chi D^\mu f\overline{D_\mu f}+\nabla^\mu W_\mu, 
\end{Eq*}
where ${}^{(X)}\pi_{\mu\nu}:=2^{-1}\LD_X\mm_{\mu\nu}$ is the deformation tensor of $X$. 
Integrating it in a given domain $\rD$, by \emph{Gauss}' law, we get the energy equality
\begin{Eq}\label{Eq:Ee}
\int_\rD \Div\kl( {}^{(X)}\tilde J[G,f]\kr)=\int_{\partial\rD}{}^{(X)}\tilde J[G,f]_{\vec n} 
\end{Eq}
where $\vec n$ is the outer normal vector of $\partial\rD$.

In this paper, 
we will use a special kind of multiplier vector fields.
Consider $X=2^{-1}\kappa K$ with $\kappa=\kappa(u,v)$ which will be fixed later,
$\chi=t\kappa$ and $W=(2r)^{-1}|f|^2\kl(u^2L\kappa\Lb-v^2\Lb\kappa L\kr)$. 
We can calculate that 
\begin{Eq}\label{Eq:Eet}
{}^{(X)}\pi_{\Lb\Lb}=&-2v^2\Lb\kappa,\quad{}^{(X)}\pi_{L\Lb}=-2t\kappa-2^{-1}K\kappa,\quad{}^{(X)}\pi_{LL}=-2u^2L\kappa,\quad{}^{(X)}\pi_{e_ie_j}=t\kappa\sm_{ij},\\
{}^{(X)}\tilde J[G,f]_\Lb
=&\kappa\cdot\kl(u^2(|\alphab[G]|^2+|\hD_{\Lb} f|^2)+v^2(|\rho[G]|^2+|\sigma[G]|^2+|\sD f|^2)\kr)+ \frac{1}{2r^2}\Lb\kl(r(u^2+v^2)\kappa|f|^2\kr),\\
{}^{(X)}\tilde J[G,f]_L
=&\kappa\cdot\kl(u^2(|\rho[G]|^2+|\sigma[G]|^2+|\slashed D f|^2)+v^2(|\alpha[G]|^2+|\hD_{L} f|^2)\kr)- \frac{1}{2r^2}L\kl(r(u^2+v^2)\kappa|f|^2\kr),\\
\Div\kl( {}^{(X)}\tilde J[G,f]\kr)
=&\frac{1}{2}\kappa\cdot\kl(G_K{}^{\nu}\nabla^{\mu}G_{\mu\nu}+\Re(\overline{\square_A f}\hD_{K}f)+\Im(f\overline{D^\mu f})F_{K\mu}\kr)\\
&\quad-\frac{u^2L\kappa}{2}\cdot\kl(|\alphab[G]|^2+|\hD_{\Lb}f|^2\kr)-\frac{v^2\Lb\kappa}{2}\cdot\kl(|\alpha[G]|^2+|\hD_{L}f|^2\kr)\\
&\quad-\frac{K\kappa}{4}\cdot\kl(|\rho[G]|^2+|\sigma[G]|^2+|\sD f|^2\kr).
\end{Eq}

\subsection{Some induced formulas of the MKG equation}
In the null frame, for the scalar field, we have,
\begin{Eq}\label{Eq:MKGe-ud_phi}
D_LD_\Lb(r\phi)=\sD^2(r\phi)-\I\rho\cdot(r\phi).
\end{Eq}%
As for the \emph{Maxwell} field, as shown in Section 2.2 of \cite{MR4030741},
for any $\chi(\bZ)\leq 2$, we have
\begin{Eq}\label{Eq:MKGe-ud_FZ}
\begin{cases}
L(r^2\rho^{(\bZ)})=-\sdiv(r^2\alpha^{(\bZ)})+r^2J^{(\bZ)}_L,\\
\Lb(r^2\rho^{(\bZ)})=\sdiv(r^2\alphab^{(\bZ)})-r^2J^{(\bZ)}_\Lb,\\
L(r^2\sigma^{(\bZ)})=-\sdiv(r^2{}^*\alpha^{(\bZ)}),\quad 
\Lb(r^2\sigma^{(\bZ)})=-\sdiv(r^2{}^*\alphab^{(\bZ)}),\\
L(r\alphab^{(\bZ)}_i)=-\snabla_i(r\rho^{(\bZ)})+{}^*\snabla_i(r\sigma^{(\bZ)})+rJ^{(\bZ)}_{e_i},\\
\Lb(r\alpha^{(\bZ)}_i)=\snabla_i(r\rho^{(\bZ)})+{}^*\snabla_i(r\sigma^{(\bZ)})+rJ^{(\bZ)}_{e_i},
\end{cases}
\end{Eq}
where $({}^*\slashed \Xi)_j:=\slashed\Xi^i\svE_{ij}$.
Note that here $\rho$ can be freely replaced by $\rrho$ in the region $\{u\leq -1\}$.
Meanwhile, here $J^{(\bZ)}$ has the expansion formula
\begin{Eq}\label{Eq:Efo_JZ}
J_\mu
=&\Im(\phi\cdot \overline{ D_\mu\phi})=\Im(\phi\cdot \overline{ \hD_\mu\phi}),\\
J^{(Z)}_\mu
=&\Im(\phi^{(Z)}\cdot \overline{ \hD_\mu\phi})+\Im(\phi\cdot \overline{ \hD_\mu \phi^{(Z)}})-|\phi|^2F_{Z\mu},\\
J^{(Z_1,Z_2)}_\mu
=&\Im(\phi^{(Z_1,Z_2)}\cdot \overline{ \hD_\mu\phi})+\Im(\hD_{Z_2}\phi\cdot \overline{ \hD_\mu \phi^{(Z_1)}})+\Im(\phi^{(Z_1)}\cdot \overline{ \hD_\mu \phi^{(Z_2)}})+\Im(\phi\cdot \overline{ \hD_\mu\phi^{(Z_1,Z_2)}})\\
&-2\Re(\phi\cdot\overline{\phi^{(Z_1)}})F_{Z_2\mu}-2\Re(\phi\cdot\overline{\phi^{(Z_2)}})F_{Z_1\mu}-|\phi|^2F^{(Z_1)}_{Z_2\mu}-|\phi|^2F_{[Z_2,Z_1]\mu}.
\end{Eq}
As shown in that paper, we also know
\begin{Eq}\label{Eq:MKGe_phiZ}
\square_A\phi^{(\bZ)}=
\begin{cases}
0,&\bZ=\emptyset,\\
Q(\phi,F;Z),&\bZ=Z,\\
\begin{aligned}
&Q(\phi^{(Z_1)},F;Z_2)+Q(\phi^{(Z_2)},F;Z_1)\\
&~ +Q(\phi,F;[Z_1,Z_2])+Q(\phi,F^{(Z_1)};Z_2)\\
&~-2F_{Z_1\mu}F_{Z_2}{}^\mu\phi,
\end{aligned}&\bZ=(Z_1,Z_2),
\end{cases}
\end{Eq}
with 
\begin{Eq*}
Q(f,G;Z):=&2\I G_{\mu\nu}Z^\nu D^\mu f+\I\nabla^{\mu}(Z^\nu G_{\mu\nu})f.
\end{Eq*}
Here, for $u\leq -1$, the $Q(f,G;Z)$ can be controlled as follow:
\begin{Eq}\label{Eq:Eo_Q}
|u|^{-\chi_K(\bZ)}|Q(f,G;Z)|
\lesssim& \big(|u||(\Div G)_\Lb|+v|\slashed{\Div G}|+|u|^{-1}v^2|(\Div G)_L|\big)|f|\\
&+\big(|u||\rho[G]|+v|\alpha[G]|\big)|\hD_\Lb f|\\
&+\big(|u||\alphab[G]|+v|\sigma[G]|+|u|^{-1}v^2|\alpha[G]|\big)|\sD f|\\
&+\big(v|\alphab[G]|+|u|^{-1}v^2|\rho[G]|\big)|\hD_Lf|\\
&+\big(|\rho[G]|+|\sigma[G]|\big)|f|.
\end{Eq}

\section{The analysis in interior region}\label{Sn:Taiir}
\subsection{The conformal transform}
To begin with, we introduce a conformal transform result 
which will be used in the interior region.
The whole picture of this theory to MKG equations can be found in, e.g., 
Chapter 4 of \cite{MR2391586} or Section 6.1 of \cite{MR4030741}. 

\begin{proposition}\label{Pn:Toct}
Consider $\pi:(\rD,m)\rightarrow (\tilde \rD,\tilde m)$ is a conformal bijection between two domains of \emph{Minkowski} space with a scalar field $\Lambda$  that satisfying
\begin{Eq*}
\pi^{*}\tilde m=\Lambda^{-2}m,\qquad \square (\Lambda^{-1})=0.
\end{Eq*}
Then, $(A,\phi)$ is a solution of \eqref{Eq:MKGe-Fphi} on $(\rD,m)$ 
if and only if $(\tA,\tphi):=(\pi^{-1})^*(A,\Lambda\phi)$ is a solution of \eqref{Eq:MKGe-Fphi} on $(\tilde\rD,\tilde m)$.
\end{proposition}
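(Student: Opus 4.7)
The strategy is to pull everything back to $\rD$, so that we are comparing two systems on the same domain with two conformally related metrics $m$ and $m':=\Lambda^{-2}m$, the Maxwell potential unchanged, and the scalar field changed from $\phi$ to $\Lambda\phi$. Since $\pi$ is an isometry from $(\rD,m')$ to $(\tilde\rD,\tilde m)$, the statement is equivalent to: $(A,\phi)$ solves \eqref{Eq:MKGe-Fphi} with respect to $m$ if and only if $(A,\Lambda\phi)$ solves \eqref{Eq:MKGe-Fphi} with respect to $m'$. Because the transformation $(A,\phi)\mapsto(A,\Lambda\phi)$ is a bijection and the two conformal factors $\Lambda$ and $\Lambda^{-1}$ play symmetric roles (with $\square_m\Lambda^{-1}=0$ passing to a corresponding condition on $m'$), the two implications are manifestly symmetric, so it suffices to verify one direction.

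For the Maxwell half, the Faraday tensor $F=\d A$ is a 2-form that depends only on $A$, hence is identical for both metrics. Using $(m')^{\mu\nu}=\Lambda^{2}m^{\mu\nu}$ and $\sqrt{-m'}=\Lambda^{-4}\sqrt{-m}$ in four dimensions, a direct Christoffel-symbol computation (or the standard divergence-of-antisymmetric-tensor formula) yields
\begin{equation*}
\nabla^{m',\mu}F_{\mu\nu}=\Lambda^{2}\,\nabla^{m,\mu}F_{\mu\nu}.
\end{equation*}
On the other hand, $D_\nu(\Lambda\phi)=\Lambda D_\nu\phi+(\partial_\nu\Lambda)\phi$, and since $\Lambda$ is real the extra piece $\Lambda(\partial_\nu\Lambda)\Im(|\phi|^2)$ vanishes, giving $J_\nu[\Lambda\phi]=\Lambda^{2}J_\nu[\phi]$. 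Multiplying the original Maxwell equation by $\Lambda^2$ therefore gives exactly the transformed Maxwell equation.

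For the scalar half, the gauge-covariant d'Alembertian admits the coordinate expression
\begin{equation*}
\square_{A,g}\phi=\frac{1}{\sqrt{-g}}(\partial_\mu+\I A_\mu)\bigl[\sqrt{-g}\,g^{\mu\nu}(\partial_\nu+\I A_\nu)\phi\bigr].
\end{equation*}
Apply this formula with $g=m'=\Omega^2 m$ (setting $\Omega:=\Lambda^{-1}$) and argument $\tilde\phi=\Omega^{-1}\phi=\Lambda\phi$. Expanding $(\partial_\nu+\I A_\nu)(\Omega^{-1}\phi)=\Omega^{-1}D_\nu\phi-\Omega^{-2}(\partial_\nu\Omega)\phi$ and pulling the $\Omega$ weights out through $\sqrt{-m'}=\Omega^{4}\sqrt{-m}$, $(m')^{\mu\nu}=\Omega^{-2}m^{\mu\nu}$, the two cross-terms of the form $m^{\mu\nu}\partial_\mu\Omega\,D_\nu\phi$ cancel by symmetry of $m^{\mu\nu}$, and what remains is
\begin{equation*}
\square_{A,m'}(\Lambda\phi)=\Lambda^{3}\,\square_{A,m}\phi-\Lambda^{4}(\square_m\Omega)\phi=\Lambda^{3}\,\square_{A,m}\phi,
\end{equation*}
using precisely the hypothesis $\square_m(\Lambda^{-1})=0$. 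Hence $\square_{A,m}\phi=0$ if and only if $\square_{A,m'}(\Lambda\phi)=0$.

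The likely friction point is step three: one must carefully carry the gauge connection $A$ through the conformal rescaling without losing the cancellation between the cross term $m^{\mu\nu}\partial_\mu\Omega\,D_\nu\phi$ and its conjugate, and one must track that the leftover curvature-like piece reduces exactly to $\phi\cdot\square_m\Omega$ rather than an expression involving $\Omega^{-1}\square\Omega^{-1}$ (the two differ by a term $2\Omega^{-1}|\nabla\Omega|^2$ which must be accounted for). Once this accounting is done cleanly, the only hypothesis invoked is $\square_m\Lambda^{-1}=0$, and assembling the Maxwell and scalar pieces proves the equivalence.
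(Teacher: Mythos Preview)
Your argument is correct. The reduction to comparing $(A,\phi)$ for $m$ with $(A,\Lambda\phi)$ for $m'=\Lambda^{-2}m$ via the isometry $\pi:(\rD,m')\to(\tilde\rD,\tilde m)$ is clean, and the two key identities
\[
\nabla^{m',\mu}F_{\mu\nu}=\Lambda^{2}\nabla^{m,\mu}F_{\mu\nu},\qquad
\square_{A,m'}(\Lambda\phi)=\Lambda^{3}\square_{A,m}\phi-\Lambda^{4}(\square_m\Lambda^{-1})\phi
\]
are verified exactly as you indicate; the cross terms cancel by symmetry and the residual term is precisely $\square_m\Lambda^{-1}$, so the hypothesis kills it. Since $\Lambda\neq 0$, each identity already gives an ``if and only if'', so the appeal to symmetry of the two directions is unnecessary (and your aside that $\square_m\Lambda^{-1}=0$ passes to $\square_{m'}\Lambda=0$ is true but not needed).

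The paper does not give its own proof of this proposition; it simply records the statement and refers the reader to Chapter~4 of Christodoulou's monograph and Section~6.1 of Yang--Yu for the standard conformal-covariance computation. Your direct verification is the expected argument behind those citations.
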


In $(\tt,\tx)$ coordinate system, we define $\tu$, $\tL$, etc., similar to that in $(t,x)$ coordinate system.
Now, for any fixed $U_*$, 
we define $T_*:=2U_*-2^{-1}$ and $\Lambda:=(t-T_*)^2-|x|^2$.
It is easy to check that the map $\pi$ defined by
\begin{Eq*}
\pi:\quad&D_{U_*,\infty}^{U_*,\infty}=\{(t,x):u\geq U_*\}&\rightarrow \quad&\trD_{0,1}^{0,1}=\{(\tt,\tx):\tu>0,\tv\leq 1\},\\
&\qquad\quad(t,x)&\mapsto\quad&\qquad(\tt,\tx)=\Lambda^{-1}(t-T_*,x)
\end{Eq*}
satisfies the requirements of \Pn{Pn:Toct}. 
Meanwhile, $\pi$ maps the $u\geq U_*$ part of scattering data
to the cone data on $\trH_{0}^{0,1}$ (see the figure below).
\begin{figure}[H]
\centering
\includegraphics[width=0.7\textwidth]{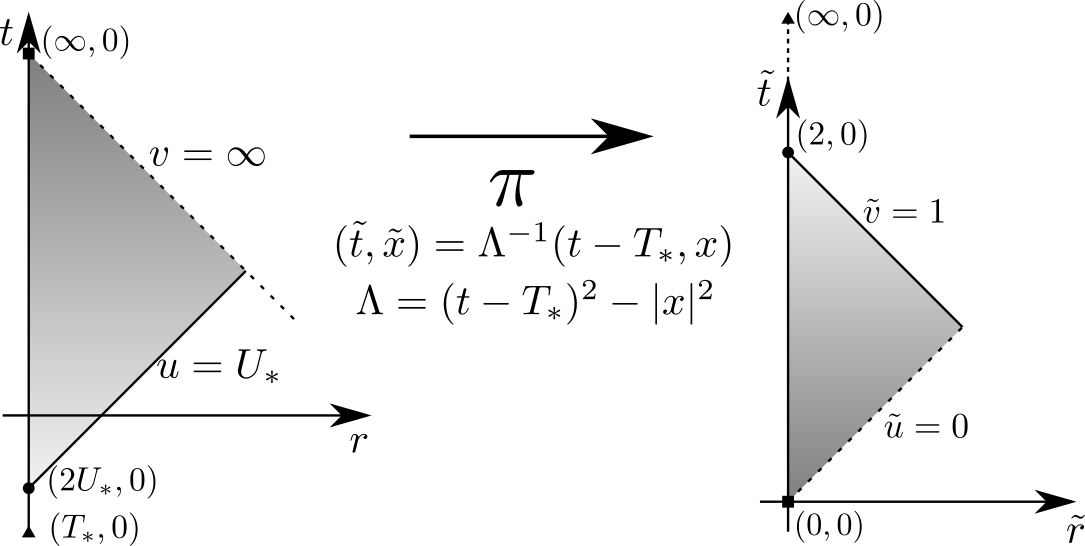}
\end{figure}
Thus, to construct the solution of \eqref{Eq:MKGe-Fphi} in $D_{+;U_*,\infty}^{U_*,\infty}$ with any given $U_*$, 
we only need to construct the solution of \eqref{Eq:MKGe-Fphi} in $\trD_{0,1}^{0,1}$ with the corresponding data on $\trH_{0}^{0,1}$.

\subsection{The equation on $\trD_{0,1}^{0,1}$ and its data}
To solve the MKG equation in $\trD_{0,1}^{0,1}$, we need to temporarily choose a gauge 
so that $\trH_{0}^{0,1}$ becomes a characteristic hypersurface.
For this purpose, we adopt the \emph{Lorenz} gauge, that is, 
\begin{Eq}\label{Eq:gci_tD}
\tilde\lambda=\tpartial^\mu \tA_\mu\equiv 0, \qquad ~for~(\tt,\tx)\in \tD_{0,1}^{0,1}.
\end{Eq}
In this gauge, \eqref{Eq:MKGe-Fphi} reduce to the semilinear wave system
\begin{Eq}\label{Eq:MKGe-swe}
\begin{cases}
\tsquare \tA_\mu=-\tJ_\mu=-\Im(\tphi\cdot\overline{\tpartial_\mu \tphi})+\tA_\mu|\tphi|^2,\\
\tsquare \tphi=-2\I \tA^\mu\tpartial_\mu\tphi+\tA^\mu \tA_\mu\tphi.
\end{cases}
\end{Eq}

Conversely, once $(\tA,\tphi)$ solves \eqref{Eq:MKGe-swe}, then $\tilde\lambda$ satisfies the wave equation
\begin{Eq*}
\tsquare\tilde \lambda=\tilde\lambda|\tphi|^2.
\end{Eq*}
This means $\tilde\lambda\equiv 0$ in $\trD_{0,1}^{0,1}$ as long as $\tilde\lambda=0$ on $\trH_{0}^{0,1}$.

Next, we construct the initial data of $(\tA,\tphi)$ on $\trH_0^{0,1}$.
To begin with, we introduce some relations under mapping $\pi$. 
Defining $u_*=\frac{t-T_*-r}{2}$ and $v_*=\frac{t-T_*+r}{2}$, we have
\begin{Eq}\label{Eq:Rbofatf}
&\tu=(4v_*)^{-1},&\quad &\tv=(4u_*)^{-1}, &\quad& \Lambda=4u_*v_*=\tr^{-1}r,\\
&\tLb=-4v_*^2L,&\quad& \tL=-4u_*^2\Lb,&\quad& \te_i=4u_*v_* e_i,&\quad&\tOmega_{ij}=\Omega_{ij}.
\end{Eq}
Then, we are able to calculate
\begin{Eq*}
\talpha_i(0,\tv,\tomega)=&\lim_{\tu\rightarrow 0}\tF_{\tL\te_i}(\tu,\tv,\tomega)=-16u_*^3\lim_{v\rightarrow\infty}[v_*F_{\Lb e_i}](u,v,\omega)=-16u_*^3\Alphab_i(u,\omega),\\
\tphi(0,\tv,\tomega)=&\tv^{-1}\lim_{\tu\rightarrow 0}[\tr\tphi](\tu,\tv,\tomega)=4u_*\lim_{v\rightarrow \infty}[r\phi](u,v,\omega)=4u_*\Phi(u,\omega).
\end{Eq*}
Now, using \eqref{Eq:Rbofatf} and noticing $\|(\Alphab,\Phi)\|_{S\!N_{U_*}}<\infty$, we have
\begin{Eq}\label{Eq:Eo_talpha_o_tH}
&\sum\Low{n\leq 2}\hspace{0pt}\int_0^1\int_{\rS^2}\tv^{-2+2n}\kl|\tL^n\tOmega^{\leq 6-2n} \talpha\kr|_{\tu=0}^2\d\tomega\d \tv\\
\lesssim&\sum\Low{n\leq 2}\int_{U_*}^\infty \int_{\rS^2} u_*^{2-2n}\kl|(u_*^2\Lb)^n\Omega^{\leq 6-2n} (u_*^3\Alphab)\kr|^2\d\omega\frac{\d u}{u_*^2}\\
\lesssim&\sum\Low{n\leq 2}\int_{U_*}^\infty \int_{\rS^2} \kl<u\kr>^{6+2n}\kl|\Lb^n\Omega^{\leq 6-2n} \Alphab\kr|^2\d\omega\d u<\infty,
\end{Eq}
and similarly
\begin{Eq}\label{Eq:Eo_tsAphi_o_tH}
\sum_{n\leq 3}\int_0^1\int_{\trS^2}\tv^{-4+2n}\kl|\tD_\tL^n\tD_\tOmega^{\leq 7-2\Ma{n}}\tphi\kr|_{\tu=0}^2\d\tomega\d \tv<\infty.
\end{Eq}

Armed with these estimates,
we make the following claim.
\begin{claim}\label{Cm:Eo_tAphi_o_tH}
Assume that \eqref{Eq:Eo_talpha_o_tH} and \eqref{Eq:Eo_tsAphi_o_tH} are satisfied.
Then, in \emph{Lorenz} gauge, there exists a collection $(\tA,\tphi)$ on $\trH_0^{0,1}$  corresponding to $(\talpha,\tphi)$, which satisfying $\tA_\tL=0$ and 
\begin{Eq}\label{Eq:Eo_tAphi_o_tH}
\sum\Low{|\alpha|\leq 1\\|\beta|\leq 2}\int_{\trH_{0}^{0,1}}\kl|(\tspartial,\tL)^\alpha\tpartial^\beta\big(\tA,\tphi\big)\kr|^2<\infty.
\end{Eq}
Moreover, this collection is the unique collection (up to a gauge transform under \emph{Lorenz} gauge) with respect to $(\talpha,\tphi)$.
\end{claim}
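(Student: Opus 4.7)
The plan is to construct the four components of $\tA$ in the null frame $\{\tLb,\tL,\te_1,\te_2\}$ on $\trH_0^{0,1}$ by using $\talpha_i = \tF_{\tL\te_i}$ as free data, the Lorenz gauge as a propagation constraint along the null generators, and the residual gauge freedom to impose $\tA_{\tL} \equiv 0$. The scalar field $\tphi$ is already given by the data, so the derivatives of $\tphi$ in \eqref{Eq:Eo_tAphi_o_tH} will be controlled by combining \eqref{Eq:Eo_tsAphi_o_tH} with the wave equation in \eqref{Eq:MKGe-swe}, which trades transverse derivatives of $\tphi$ against tangential ones plus nonlinear source terms.

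With $\tA_{\tL} = 0$, the null-frame identity analogous to $\alpha_i = r^{-1}L(rA_{e_i}) - e_i A_L$ collapses to the transport equation $\tL(\tr\tA_{\te_i}) = \tr\talpha_i$. Since $\tr = \tv$ on $\{\tu = 0\}$, integrating from the vertex $\tv = 0$ gives
\begin{Eq*}
\tA_{\te_i}(\tv,\tomega) = \frac{1}{\tv}\int_0^{\tv}\tv'\talpha_i(\tv',\tomega)\,\d\tv',
\end{Eq*}
which is well defined thanks to the vanishing of $\talpha$ at the tip forced by the weight $\tv^{-2}$ in \eqref{Eq:Eo_talpha_o_tH} at $n=0$. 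The remaining component $\tA_{\tLb}$ is recovered from the Lorenz constraint $\tpartial^\mu\tA_\mu = 0$ restricted to the cone: with $\tA_{\tL} = 0$, this becomes a transport equation of schematic form $\tL\tA_{\tLb} = 2\tspartial\cdot\tA_{\te} + (\text{angular connection terms})$, which I again integrate from $\tv = 0$.

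To prove the weighted bound \eqref{Eq:Eo_tAphi_o_tH}, I would first control the tangential derivatives $(\tspartial,\tL)^{\leq 3}\tA$ directly from the integral formulas, using a Hardy-type inequality in $\tv$ to absorb the $\tv^{-1}$ averaging factor against the weights in \eqref{Eq:Eo_talpha_o_tH}. Transverse derivatives $\tLb\tpartial^\beta\tA$ are then extracted from $\tsquare\tA_\mu = -\tJ_\mu$, which algebraically expresses $\tL\tLb\tA_\mu$ in terms of second-order tangential derivatives of $\tA$ and nonlinear terms in $(\tA,\tphi)$; similarly $\tLb\tpartial^\beta\tphi$ comes from the wave equation for $\tphi$. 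I expect the main technical obstacle to be the bookkeeping of weights at the vertex $\tv = 0$: each integration from zero produces a $\tv^{-1}$ factor, and one must verify at every derivative order that the given weights in \eqref{Eq:Eo_talpha_o_tH}--\eqref{Eq:Eo_tsAphi_o_tH} are strong enough to absorb this loss and deliver the unweighted integrand of \eqref{Eq:Eo_tAphi_o_tH}.

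For uniqueness, any two such constructions $(\tA,\tphi)$ and $(\tA',\tphi)$ with the same $(\talpha,\tphi)$ and both satisfying $\tA_{\tL} = \tA'_{\tL} = 0$ produce a difference whose Maxwell field vanishes in the tangential-angular directions along the cone; Lorenz gauge then forces the difference to equal $\d\tilde\xi$ with $\tsquare\tilde\xi = 0$, which is exactly the residual Lorenz-preserving gauge freedom allowed by the statement.
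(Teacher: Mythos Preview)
Your construction of $\tA_{\tLb}$ contains a genuine gap. You write that the Lorenz condition restricted to the cone becomes a first-order transport equation $\tL\tA_{\tLb}=2\tspartial\cdot\tsA+\cdots$, but the Lorenz gauge in the null frame reads (cf.\ \eqref{Eq:Ro_LbAL_ug})
\[
\tLb\tA_{\tL}+\tL\tA_{\tLb}+2\tr^{-1}(\tA_{\tLb}-\tA_{\tL})=2\tsdiv\tsA,
\]
and the term $\tLb\tA_{\tL}$ is a \emph{transverse} derivative which does not vanish merely because $\tA_{\tL}=0$ on $\trH_0^{0,1}$. There is no residual gauge freedom to kill it either, since after fixing $\tA_{\tL}=0$ the remaining harmonic gauge function has $\tLb\tL\xi$ determined by tangential data. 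The paper resolves this by combining the Lorenz relation with the wave equation $\tsquare\tA_{\tL}=-\tJ_{\tL}$ to eliminate $\tLb\tA_{\tL}$; this yields the \emph{second-order} transport equation \eqref{Eq:c_eq_ALb} for $\tA_{\tLb}$, and per the Remark following the claim this compatibility condition is exactly what ``Lorenz gauge on the cone'' means. Your first-order equation would produce an $\tA_{\tLb}$ that fails this compatibility, so the resulting data would not propagate the Lorenz gauge under \eqref{Eq:MKGe-swe}.

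A smaller point: your uniqueness argument assumes both competitors already satisfy $\tA_{\tL}=0$, but the claim asserts uniqueness up to Lorenz gauge among all admissible $(\tA',\tphi)$. The paper handles this by first gauging away $\tA'_{\tL}$ via $\xi=\int_0^{\tv}\tA'_{\tL}\,\d\tv'$ and then invoking uniqueness of the ODEs \eqref{Eq:S_sA_b_alpha} and \eqref{Eq:c_eq_ALb}.
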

\begin{remark}
Here, $\tA$ is in \emph{Lorenz} gauge means that it satisfies the compatibility condition \eqref{Eq:c_eq_ALb} which in turn comes from \eqref{Eq:gci_tD} and \eqref{Eq:MKGe-swe}.
The proof of this Claim will be postponed to \Sn{Sn:PoCEo_tAphi_o_tH}.
\end{remark}

\subsection{The preparation for local existence}
In this and the next two subsections, we omit the tilde on $\tL$, $\tA$, etc., 
since that we will only discuss in $(\tt,\tx)$ coordinate system.
As we discussed in the \Sn{Sn:I}, there are already several results of the local existence of wave equation with characteristic data. 
However, to fit our frame and for the convenience of readers, 
we give the detailed process to construct the local solution of \eqref{Eq:MKGe-swe} 
with characteristic data satisfying \eqref{Eq:Eo_tAphi_o_tH}.

To begin with, we transfer \eqref{Eq:MKGe-swe} to a unified system
\begin{Eq}\label{Eq:Eo_Psi}
\begin{cases}
\square \Psi^I=C_{JK}^{I\mu}\Psi^J\partial_\mu \Psi^K+C_{JKL}^I\Psi^J\Psi^K\Psi^L,\\
\Psi|_{\rH_0^{0,1}}=\Psi_{(data)},
\end{cases}
\end{Eq}
with $\Psi:=(A,\phi)$ satisfying \eqref{Eq:Eo_tAphi_o_tH}.
In the beginning, we assume $\Phi_{(data)}\in C_0^\infty(\rH_0^{0,1}\backslash \rS_0^0)$.

Then, we construct the iterative system
\begin{Eq}\label{Eq:c_it_Phi}
\begin{cases}
\square \Psi_{j+1}^I=C_{JK}^{I\mu}\Psi_{j}^J\partial_\mu \Psi_{j}^K+C_{JKL}^I\Psi_{j}^J\Psi_{j}^K\Psi_{j}^L,\\
\Psi_{j+1}|_{\rH_0^{0,1}}=\Psi_{(data)},
\end{cases}
\end{Eq}
with $j\in\rN_0$ and $\Psi_0:=0$.
Using the result shown in \cite{MR0460898} and the iteration,
we know $\Psi_j$ is smooth in $\rD_{0,1}^{0,1}$ for any $j$.

Next, we are going to show that $\{\Psi_j\}$ converges in $\rD_{0,\delta}^{0,1}$ with some $\delta$ small enough.
For this purpose, for $U\leq\delta\ll1$, $V\leq1$ and $n=0,1,2$, we define
\begin{Eq*}
\varE^{(n)}[\Psi](U,V):=&\sup_{0\leq u\leq U}\|(L,\spartial)\partial^{\leq n}\Psi\|_{L^2(\rH_{u}^{0,V})}^2+\sup_{0\leq v\leq V}\|(\Lb,\spartial)\partial^{\leq n}\Psi\|_{L^2(\rHb_{0,U}^{v})}^2,\\
\varE_0^{(n)}[\Psi](U,V):=&\sup_{0\leq u\leq U}\|\partial^{\leq n}\Psi\|_{L^2(\rH_{u}^{0,V})}^2.
\end{Eq*}
Following \eqref{Eq:Eo_tAphi_o_tH}, we know that there exists a constant $M_1\gg 1$, such that for any $j$, there is
\begin{Eq}\label{Eq:ci_EE0_j}
\varE_0^{(2)}[\Psi_j](0,1)+\varE^{(2)}[\Psi_j](0,1)\leq M_1^2.
\end{Eq}
We also find that
\begin{Eq*}
\|\partial^{\leq n}\Psi\|_{L^2(\rH_{U}^{0,V})}^2-\|\partial^{\leq n}\Psi\|_{L^2(\rH_{0}^{0,V})}^2
\lesssim& \int_{\rD_{0,U}^{0,V}}|\partial_t\partial^{\leq n}\Psi||\partial^{\leq n}\Psi|\\
\lesssim& \int_0^V \|\Lb\partial^{\leq n}\Psi\|_{L^2(\rHb_{0,U}^{v})}^2\d v+\int_0^U\|(L,1)\partial^{\leq n}\Psi\|_{L^2(\rH_{u}^{0,V})}^2\d u\\
\lesssim& \varE^{(n)}[\Psi](U,V)+\delta \varE_0^{(n)}[\Psi](U,V),
\end{Eq*}
which means 
\begin{Eq}\label{Eq:c_E0}
\varE_0^{(n)}[\Psi](U,V)\lesssim \varE_0^{(n)}[\Psi](0,V)+\varE^{(n)}[\Psi](U,V).
\end{Eq}

\subsection{The uniformly boundedness of $\Psi_j$ in $\rD_{0,\delta}^{0,1}$}
In this subsection we will show that, 
for some $M_2\gg 1$ and $\delta$ to be fixed later, 
we have
\begin{Eq}\label{Eq:c_E_j}
\varE^{(2)}[\Psi_j](U,V)< 4M_1^2e^{2M_2V}
\end{Eq}
for any $j$, $U\leq \delta$ and $V\leq 1$.

It is obvious that \eqref{Eq:c_E_j} holds for $j=0$. Next, we assume that it holds for some specific $j$. 
Now, using \eqref{Eq:c_E0} and \eqref{Eq:ci_EE0_j}, we know
\begin{Eq}\label{Eq:c_E0_j}
\varE_0^{(2)}[\Psi_j](U,V)\lesssim M_1^2e^{2M_2V}.
\end{Eq}
Meanwhile, using Sobolev inequality on cone, \eqref{Eq:c_E_j} and \eqref{Eq:c_E0_j}, we get that
\begin{Eq}\label{Eq:c_Dinf_j}
\|\partial^{\leq 1}\Psi_j\|_{L^\infty(\rD_{0,U}^{0,V})}\leq& \sup_{0\leq u\leq U}\|\partial^{\leq 1}\Psi_j\|_{L^\infty(\rH_{u}^{0,V})}\\
\lesssim& \sup_{0\leq u\leq U}\|(L,\spartial)^{\leq 2}\partial^{\leq 1}\Psi_j\|_{L^2(\rH_{u}^{0,V})}\\
\lesssim&\varE^{(2)}[\Psi_j](U,V)^{\frac{1}{2}}+\varE_0^{(2)}[\Psi_j](U,V)^{\frac{1}{2}}\\
\lesssim&M_1e^{M_2V}.
\end{Eq}
Thus, once $\delta\lesssim e^{-M_2}$, we have
\begin{Eq}\label{Eq:c_inf_j}
\|\Psi_j\|_{L^\infty(\rD_{0,U}^{0,V})}\leq& \|\Psi_j\|_{L^\infty(\rH_{0}^{0,V})}+U\|\Lb\Psi_j\|_{L^\infty(\rD_{0,U}^{0,V})}\\
\lesssim& \|(L,\spartial)^{\leq 2}\Psi_j\|_{L^2(\rH_{0}^{0,V})}+U M_1e^{M_2V}\\
\lesssim&\varE_0^{(2)}[\Psi_j](0,V)^{\frac{1}{2}}+\delta M_1e^{M_2V}\\
\lesssim& M_1.
\end{Eq}

Then, using the standard energy estimate of \eqref{Eq:c_it_Phi}, we know
\begin{Eq*}
\varE^{(2)}[\Psi_{j+1}](U,V)-\varE^{(2)}[\Psi_{j+1}](0,V)
\leq& C\int_{\rD_{0,U}^{0,V}}\kl|\partial^{\leq 2}\kl(\Psi_j\partial\Psi_j,\Psi_j^3\kr)\kr|\kl|\partial_t\partial^{\leq 2}\Psi_{j+1}\kr|\\
\leq& C_1\int_{\rD_{0,U}^{0,V}}\kl(\underbrace{|\Lb\partial^{\leq 2}\Psi_j||\Psi_j|}_{I_1}+\underbrace{|(L,\spartial)^{\leq 1}\partial^{\leq 2}\Psi_j||\partial^{\leq 1}\Psi_j|\kl<\Psi_j\kr>}_{I_2}\kr)\\
&\phantom{C_1\int_{\rD_{0,U}^{0,V}}}\quad\times \kl(\underbrace{|\Lb\partial^{\leq 2}\Psi_{j+1}|}_{I_3}+\underbrace{|(L,\spartial)\partial^{\leq 2}\Psi_{j+1}|}_{I_4}\kr)
\end{Eq*}
for some specific $C_1$.
Among them, using \eqref{Eq:c_E_j} and \eqref{Eq:c_inf_j}, we calculate
\begin{Eq*}
\int_{\rD_{0,U}^{0,V}} I_1I_3 \leq& \|\Lb\partial^{\leq 2}\Psi_j\|_{L_v^1\kl((0,V); L^2(\rHb_{0,U}^v)\kr)} \|\Psi_j\|_{L^\infty(\rD_{0,U}^{0,V})}\|\Lb\partial^{\leq 2}\Psi_{j+1}\|_{L_v^\infty\kl((0,V); L^2(\rHb_{0,U}^v)\kr)}\\
\leq &C \|M_1e^{M_2v}\|_{L_v^1(0,V)}\cdot M_1\cdot \varE^{(2)}[\Psi_{j+1}](U,V)^{\frac{1}{2}}\\
\leq &C M_2^{-2}M_1^4e^{2M_2V}+\frac{1}{8C_1}\varE^{(2)}[\Psi_{j+1}](U,V).
\end{Eq*}
Using \eqref{Eq:c_E_j}, \eqref{Eq:c_E0_j}, \eqref{Eq:c_Dinf_j} and \eqref{Eq:c_inf_j}, we see
\begin{Eq*}
\int_{\rD_{0,U}^{0,V}} I_2I_3 \leq& \|(L,\spartial)^{\leq 1}\partial^{\leq 2}\Psi_j\|_{L_u^2\kl((0,U);L^2(\rH_{u}^{0,V})\kr)}\|\partial^{\leq 1}\Psi_j\|_{L^\infty(\rD_{0,U}^{0,V})}\\
&\times \|\kl<\Psi_j\kr>\|_{L^\infty(\rD_{0,U}^{0,V})}\|\Lb\partial^{\leq 2}\Psi_{j+1}\|_{L_v^2\kl((0,V); L^2(\rHb_{0,U}^v)\kr)}\\
\leq &CU^{1/2}M_1e^{M_2V}\cdot M_1e^{M_2V}\cdot M_1\cdot \varE^{(2)}[\Psi_{j+1}](U,V)^{\frac{1}{2}}\\
\leq &C\delta M_1^6e^{4M_2V}+\frac{1}{8C_1}\varE^{(2)}[\Psi_{j+1}](U,V).
\end{Eq*}
Similarly, we have
\begin{Eq*}
\int_{\rD_{0,U}^{0,V}} I_1I_4 
\leq& C\delta M_2^{-1}M_1^4e^{2M_2V}+\frac{1}{8C_1}\varE^{(2)}[\Psi_{j+1}](U,V),\\
\int_{\rD_{0,U}^{0,V}} I_2I_4 
\leq &C\delta^2 M_1^6e^{4M_2V}+\frac{1}{8C_1}\varE^{(2)}[\Psi_{j+1}](U,V).
\end{Eq*}

Mixing these estimates together, we get that
\begin{Eq*}
\varE^{(2)}[\Psi_{j+1}](U,V)\leq M_1^2+C_2\kl(M_2^{-2}M_1^4e^{2M_2V}+\delta M_1^6e^{4M_2V}\kr)+\frac{1}{2}\varE^{(2)}[\Psi_{j+1}](U,V)
\end{Eq*}
always holds for some specific $C_2$.
Now, we fix $M_2=(2C_2)^{\frac{1}{2}}M_1$ and $\delta=(2C_2)^{-1}M_1^{-4}e^{-2M_2}$. 
It can be derived from the above inequality that \eqref{Eq:c_E_j} holds for $j+1$.
This finishes the proof of uniformly boundedness by iteration.

\subsection{The existence and uniqueness in $\rD_{0,\delta}^{0,1}$}
In this step, we will show that the sequence $\{\Psi_j\}$ is convergent.
Using the energy method again, we know
\begin{Eq*}
&\varE^{(0)}[\Psi_{j+1}-\Psi_j](U,V)\\
\lesssim &\int_{\rD_{0,U}^{0,V}} \kl|\kl(\Psi_j\partial\Psi_j-\Psi_{j-1}\partial\Psi_{j-1},\Psi_j^3-\Psi_{j-1}^3\kr)\kr|\kl|\partial_t(\Psi_{j+1}-\Psi_j)\kr|\\
\lesssim &\int_{\rD_{0,U}^{0,V}} \bigg({\kl|\Lb^{\leq 1}(\Psi_j-\Psi_{j-1})\kr|\kl|\partial^{\leq 1}(\Psi_j,\Psi_{j-1})\kr|\kl<\Psi_j,\Psi_{j-1}\kr>}+{\kl|(L,\spartial)(\Psi_j-\Psi_{j-1})\kr|\kl|(\Psi_j,\Psi_{j-1})\kr|}\bigg)\\
&\phantom{\int_{\rD_{0,U}^{0,V}}}\quad\times \kl({\kl|\Lb(\Psi_{j+1}-\Psi_j)\kr|}+{\kl|(L,\spartial)(\Psi_{j+1}-\Psi_j)\kr|}\kr).
\end{Eq*}

Similarly to the process in last subsection, using \eqref{Eq:c_E0} to \eqref{Eq:c_inf_j}
and noticing $\delta$ is sufficiently small,
we know
\begin{Eq*}
\varE^{(0)}[\Psi_{j+1}-\Psi_j](U,V)\leq&  M_3\int_0^V\varE^{(0)}[\Psi_{j}-\Psi_{j-1}](U,v)\d v+\frac{1}{2} \varE^{(0)}[\Psi_{j}-\Psi_{j-1}](U,V)
\end{Eq*}
with some $M_3\geq 4M_1^2 e^{2M_2}\geq \varE^{(0)}[\Psi_{1}-\Psi_{0}](U,V)$ and large enough. By iteration, we know
\begin{Eq*}
\varE^{(0)}[\Psi_{j+1}-\Psi_j](U,V)\leq& M_3\sum_{i=0}^j\frac{1}{i!}2^{i-j}(VM_3)^{i},\\
\sum_{j=0}^\infty \varE^{(0)}[\Psi_{j+1}-\Psi_j](U,V)\leq &M_3\sum_{i=0}^\infty\sum_{j=i}^\infty \frac{1}{i!}2^{i-j}(VM_3)^{i}
\leq 2M_3e^{VM_3}.
\end{Eq*}

This shows that $\{\Psi_j\}$ is convergent, 
and gives the existence result in $\rD_{0,\delta}^{0,1}$ with $C_0^\infty$ data. 
By density, this also gives the existence result with general data.
A similar process also gives the uniqueness of such solution. 
We left the detailed proof to the interested reader.
\begin{remark}
The processes in these three subsections can be transplanted to the local existence of more general wave equation $\square \Psi=f(\Psi,\partial\Psi)$ with smooth $f$. 
The only difference is that in \eqref{Eq:c_inf_j} we need to control $\|\partial^{\leq 1}\Psi_j\|_{L^\infty}$.
So, we should require $\|(\spartial,L)^{\leq 1}\partial^{\leq 3}\Psi\|_{L^2(H_0^{0,1})}<\infty$ compared to \eqref{Eq:Eo_tAphi_o_tH}.
\end{remark}

\subsection{The existence and uniqueness in $\rD_{0,1}^{0,1}$}
The next step is to construct the solution in whole $\trD_{0,1}^{0,1}$.
The key point here is that we have already known the global existence result of MKG equation with the \emph{Cauchy} data.
Meanwhile, we find that $\pi^{-1}$ maps the hyperboloid 
$\tilde\varC:=\{\tt/(\tt^2-\tr^2)=(2\delta)^{-1},~\tv\leq 1\}\subset \trD_{0,\delta}^{0,1}$
to the hyperplane $\varC:=\{t=(2\delta)^{-1}+T_*,~u\geq U_*\}$.
\begin{figure}[H]
\centering
\includegraphics[width=0.7\textwidth]{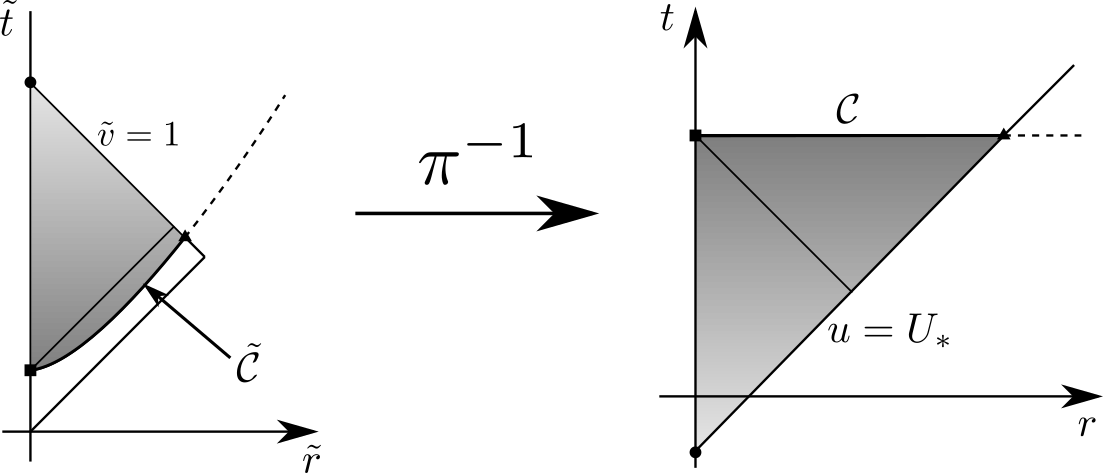}
\end{figure}
Now, notice that have we already known $(\tA,\tphi)$ and their derivatives on $\tilde\varC$. 
By mapping $\pi^{-1}$, this gives the initial data of \eqref{Eq:MKGe-Fphi} on $\varC$.
Meanwhile, since both $\tilde\varC$ and $\varC$ are bounded regions in their coordinates,
it is easy to verify that all needed energy on $\varC$ is finite.
Then, according to the theory showed in \cite{MR1271462}, 
there exists a unique solution in $\{t\leq(2\delta)^{-1}+T_*,~u\geq U_*\}$, 
and thus a unique solution in $\{\tt/(\tt^2-\tr^2)\leq(2\delta)^{-1},~\tv\leq 1\}$.

By splicing the solution in this part with the solution in $\trD_{0,\delta}^{0,1}$,
we get the solution in $\trD_{0,1}^{0,1}$, 
where the uniqueness follows from the uniqueness in each region.

\subsection{The proof of \Tm{Tm:Main1}}
Finally, notice that the above process gives a unique solution $(\tA,\tphi)$ under \emph{Lorentz} gauge.
It means that its corresponding $(A,\phi)$ is a unique solution in $\rD_{+;U_*,\infty}^{U_*,\infty}$ under some corresponded gauge which depend on $U_*$.
It also means that, up to a gauge transform, $(A,\phi)$ is the unique solution in $\rD_{+;U_*,\infty}^{U_*,\infty}$.

Now, passing $U_*\rightarrow\infty$, we reach the solution in $\rR_+\times \rR^3$,
and finishes the proof of \Tm{Tm:Main1}.

\subsection{Additional discussion on $\rH_{U_*}^{-U_*,\infty}$}\label{Sn:Ado_H}
In order to give the boundary energy in the exterior region, 
that is, the energy on $\rH_{U_*}^{-U_*,\infty}$, 
we give some additional discussions.

Here we consider $U_*\leq -1$ as a constant. 
Then, on $\rH_{U_*}^{-U_*,\infty}$ we have $r\approx v\approx v_*$.
The image of $\rH_{U_*}^{-U_*,\infty}$ under mapping $\pi$ is $\trHb_{0,\tU_*}^1$ with $\tU_*:=(1-8U_*)^{-1}$.

\part[Some preparation]
To begin with, we mention that under mapping $\pi$, we have \eqref{Eq:Rbofatf}. Thus we have
\begin{Eq}\label{Eq:Rb_Z_a_tZ}
T=-\tK,\quad
S=-(\tS+T_*\tK),\quad
K=-(\tT+2T_*\tS+T_*^2\tK).
\end{Eq}
For the writing convenience, in this subsection we set $\tPsi=(\tA_0,\tA_1,\tA_2,\tA_3,\tphi)$ and
\begin{Eq*}
I_1:=&\int_0^{\tU_*}\int_{\trS^2}|(\tLb,\tspartial)\tpartial^{\leq 2}\tPsi|_{\tv=1}^2\d\tomega\d\tu,\quad&
I_2:=&\int_0^{\tU_*}\int_{\trS^2}|\tpartial^{\leq 2}\tPsi|_{\tv=1}^2\d\tomega\d\tu,\\
I_3:=&\sup_{\tu\in (0,\tU_*]}\int_{\trS^2}|\tpartial^{\leq 2}\tPsi|_{\tv=1}^2\d\tomega.
\end{Eq*}
Here, noticing $\tr\approx 1$ on $\trHb_{0,\tU_*}^1$, by the discussions in last few subsections, we easily know 
\begin{Eq*}
I_1\lesssim \|(\tLb,\tspartial)\tpartial^{\leq 2}\tPsi\|_{L^2(\trHb_{0,1}^1)}^2<\infty.
\end{Eq*}
For $I_2$ and $I_3$, using \eqref{Eq:key_Eob1_2} and \eqref{Eq:key_Eob3_2}, we also know
\begin{Eq*}
I_2+I_3\lesssim I_1+\kl.\int_{\trS^2}|\tpartial^{\leq 2}\tPsi|\d\tomega\kr|_{\tu=0,\tv=1}^2\lesssim I_1+\|\tL^{\leq 1}\tpartial^{\leq 2}\tPsi\|_{L^2(\trH_0^{0,1})}^2<\infty.
\end{Eq*}

\part[The energy corresponding to $\rho$, $\sigma$ and $\alpha$]
For $\rho$, using \eqref{Eq:Rbofatf} and \eqref{Eq:Rb_Z_a_tZ}, under mapping $\pi$, we have
\begin{Eq*}
\sum_{\chi(\bZ)\leq 2}|r^2\rho^{(\bZ)}|=\sum_{\chi(\bZ)\leq 2}|r^2\Lb^\mu L^\nu\LD_\bZ F_{\mu\nu}|
\approx\sum_{\chi(\tbZ)\leq 2}|\tr^2\tL^\mu \tLb^\nu\LD_\tbZ \tF_{\mu\nu}|
\lesssim|\tpartial^{\leq 2}\tL\tA_\tLb|+|\tpartial^{\leq 2}\tLb\tA_\tL|.
\end{Eq*}
Using the \emph{Lorentz} gauge condition \eqref{Eq:Ro_LbAL_ug},
the relation between the null frame and the \emph{Cartesian} frame, 
and noticing $\tr\approx 1$,
we find
\begin{Eq*}
\sum_{\chi(\bZ)\leq 2}|r^2\rho^{(\bZ)}|\lesssim |(\tLb,\tspartial)^{\leq 1}\tpartial^{\leq 2}\tPsi|.
\end{Eq*}
Then, we have
\begin{Eq*}
\sum_{\chi(\bZ)\leq 2}\int_{\rH_{U_*}^{-U_*,\infty}}|\rho^{(\bZ)}|^2
\approx\sum_{\chi(\bZ)\leq 2}\int_{-U_*}^\infty\int_{\rS^2}|r^2\rho^{(\bZ)}|_{u=U_*}^2\d\omega\frac{\d v}{v_*^2}
\lesssim I_1+I_2<\infty.
\end{Eq*}

The energy estimate for $\sigma$ and $\alpha$ is similar.
At first we can calculate
\begin{Eq*}
\sum_{\chi(\bZ)\leq 2}|r^2\sigma^{(\bZ)}|
\lesssim|\tspartial^{\leq 1}\tpartial^{\leq 2}\tPsi|,\qquad
\sum_{\chi(\bZ)\leq 2}|rv_*^2\alpha^{(\bZ)}|
\lesssim|(\tLb,\tspartial)^{\leq 1}\tpartial^{\leq 2}\tPsi|.
\end{Eq*}
Then, we know
\begin{Eq*}
\sum_{\chi(\bZ)\leq 2}\int_{\rH_{U_*}^{-U_*,\infty}}\kl(|u|^2|\sigma^{(\bZ)}|^2+v^2|\alpha^{(\bZ)}|^2\kr)<\infty.
\end{Eq*}

\part[The energy corresponding to $\hD_L\phi$ and $\sD\phi$]
For $\hD_L\phi$, under mapping $\pi$, we have
\begin{Eq*}
\sum_{\chi(\bZ)\leq 2}|rv_*^2\hD_L\phi^{(\bZ)}|=\sum_{\chi(\bZ)\leq 2}|v_*^2D_LD_\bZ(r\phi)|
\approx\sum_{\chi(\tbZ)\leq 2}|\tD_\tLb \tD_\tbZ(\tr\tphi)|
\lesssim|\tLb\tpartial^{\leq2}\tPsi|+|\tpartial^{\leq 2}\tPsi|\langle \tPsi\rangle^3+\kl|\tpartial^{\leq 1}\tPsi\kr|^2.
\end{Eq*}
Then, similar to above and using \emph{H\"older} and \emph{Sobolev} inequality, we have
\begin{Eq*}
\sum_{\chi(\bZ)\leq 2}\int_{\rH_{U_*}^{-U_*,\infty}}v^2|\hD_L\phi^{(\bZ)}|^2
\lesssim& \int_0^{\tU_*}\int_{\trS^2}\kl(|\tLb\tpartial^{\leq2}\tPsi|+|\tpartial^{\leq 2}\tPsi|\langle \tPsi\rangle^3+\kl|\tpartial^{\leq 1}\tPsi\kr|^2\kr)_{\tv=1}^2\d\tomega\d\tu\\
\lesssim& I_1+I_2(1+I_3)^3+I_2^2<\infty.
\end{Eq*}
Similarly, for $\sD\phi$, we have
\begin{Eq*}
\sum_{\chi(\bZ)\leq 2}\int_{\rH_{U_*}^{-U_*,\infty}}|\sD\phi^{(\bZ)}|^2<\infty.
\end{Eq*}

\part[The energy corresponding to $J_L$ and $\sJ$]
For $J_L$, similar to above and using \eqref{Eq:CofdaZ}, we know
\begin{Eq*}
\sum_{\chi(\bZ)\leq 2}|r^2v_*^2J^{(\bZ)}_L|=&\sum_{\chi(\bZ)\leq 2}\kl|v_*^2 L^\mu \LD_\bZ\Im\big((r\phi)\cdot  \overline{D(r\phi)}\big)_\mu\kr|
\approx\sum_{\chi(\tbZ)\leq 2}\kl|\tLb^\mu\LD_{\tbZ}\Im\kl((\tr\tphi)\cdot \overline{\tD(\tr\tphi)}\kr)_\mu\kr|\\
\lesssim&\kl(|\tLb^{\leq 1}\tpartial^{\leq2}\tPsi||\tPsi|+|\tLb\tpartial^{\leq 1}\tPsi||\tpartial^{\leq 1}\tPsi|+|\tLb\tPsi||\tpartial^{\leq 2}\tPsi|\kr)\langle \tPsi\rangle.
\end{Eq*}
Then, we similarly have
\begin{Eq*}
\sum_{\chi(\bZ)\leq 2}\int_{\rH_{U_*}^{-U_*,\infty}}v^4|J^{(\bZ)}_L|^2
\lesssim(I_1+I_2)I_3(1+I_3)<\infty.
\end{Eq*}
Similarly, for $\sJ$, we know
\begin{Eq*}
\sum_{\chi(\bZ)\leq 2}\int_{\rH_{U_*}^{-U_*,\infty}}v^2|\sJ^{(\bZ)}|^2<\infty.
\end{Eq*}

\subsection{Additional discussion on null infinity}
In this subsection, we give some discussions of the limit behavior of $(F,\phi)$ on null infinity.
Here we always assume $u\geq U_*:=-1$, then $u_*\geq 1/4$.
At first, we see a calculation trick that
\begin{Eq}\label{Eq:spts_3}
\lim_{s\rightarrow\infty}\sum_{n\leq N}|c|^{-n}\kl|{\kl((s+c)^2\partial_s\kr)^n\kl((s+c)^2f\kr)}\kr|
\approx\lim_{s\rightarrow\infty}\sum_{n\leq N}|c|^{-n}\kl|{\kl(s^2\partial_s\kr)^n(s^2f)}\kr|
\end{Eq}
is held for the constant independent of $c$.
Then, using the relations under mapping $\pi$,
we know
\begin{Eq}\label{Eq:Eo_rho_oini}
\lim_{v\rightarrow\infty}\sum_{n\leq 1}u_*^{-2n}\int_{\rS^2}\kl|{(r^2L)^n\Omega^{\leq 5-2n}(r^2\rho)}\kr|^2\d\omega
\approx&\lim_{v\rightarrow\infty}\sum_{n\leq 1}u_*^{-2n}\int_{\rS^2}\kl|{(v_*^2L)^n\Omega^{\leq5-2n}(r^2\rho)}\kr|^2\d\omega\\
\lesssim&\sum_{n\leq 1}\tv^{2n}\int_{\trS^2}\kl|\tLb^n\kl(\tr^2\tOmega^{\leq 5-2n}(\tLb \tA_\tL,\tL\tA_\tLb)\kr)\kr|_{\tu=0}^2\d\tomega\\
\lesssim&\sum_{n\leq 1}\tv^{5} \varF[(\tA_\tLb,\tA_\tL);n+1]\lesssim u_*^{-5}
\end{Eq}
with $\varF[\ \cdot\ ;n]$ is defined and controlled in \Sn{Sn:PoCEo_tAphi_o_tH}. Similarly we know
\begin{Eq}\label{Eq:Eo_sigma_oini}
\lim_{v\rightarrow\infty}\sum_{n\leq 2}u_*^{-2n}\int_{\rS^2}\kl|{(r^2L)^{n}\Omega^{\leq 5-2n}(r^2\sigma)}\kr|^2\d\omega
\lesssim&\sum_{n\leq 2}\tv^{2n}\int_{\trS^2}\kl|\tLb^n\kl(\tr\tOmega^{\leq 6-2n}\tsA\kr)\kr|_{\tu=0}^2\d\tomega\\
\lesssim&\sum_{n\leq 2}\tv^{5} \varF[\tsA;n]\lesssim u_*^{-5}
\end{Eq}
and
\begin{Eq}\label{Eq:Eo_alpha_oini}
\lim_{v\rightarrow\infty}\sum_{n\leq 1}u_*^{-2n}\int_{\rS^2}\kl|{(r^2L)^n\Omega^{\leq 4-2n}(r^3\alpha)}\kr|^2\d\omega
\lesssim&\sum_{n\leq 1}\tv^{2n}\int_{\trS^2}\kl|\tLb^n\tOmega^{\leq 4-2n}(\tOmega\tA_\tLb,\tLb(\tr\tsA))\kr|_{\tu=0}^2\d\omega\\
\lesssim&\sum_{n\leq 1}\tv^{3}\varF[(\tA_\tLb,\tsA);n+1]\lesssim u_*^{-3}.
\end{Eq}

As for $\phi$, similar to the process in the last section, we know
\begin{Eq}\label{Eq:Eo_phi_oini}
\lim_{v\rightarrow\infty}\sum_{n\leq 2}u_*^{-2n}\int_{\rS^2}\kl|{(r^2D_L)^n D_\Omega^{\leq 5-2n}(r\phi)}\kr|^2\d\omega
\lesssim&\sum_{n\leq 2}\tv^{2n}\int_{\trS^2}\kl|{\tD_\tLb^n \tD_\tOmega^{\leq 5-2n}(\tr\tphi)}\kr|^2\d\tomega\\
\lesssim&\sum_{n\leq 2}\tv^{5}\varF[\tphi;n]\kl<\varF[(\tA_\tLb,\tsA),n]\kr>^{5-n}\lesssim u_*^{-5}.
\end{Eq}


\section{The analysis in exterior region, part 1}\label{Sn:Taierp1}
In this and the next sections, we will focus on a subset of $\rD_{-\infty,-1}^{1,\infty}$.
Thus, in these sections we always assume $1\leq -u\leq v\approx r$ (and similar for $U$, $V$).
We use the convention $\INF{f}$ to denote $\lim_{v\rightarrow\infty}f$. 
Meanwhile, we always allow the upcoming constant in $\lesssim$ to depend on the initial data.
Finally, we fix a $\delta$ that is small enough, say, $(1-\varepsilon_2)/100$.

\subsection{Preparation}
As that in \Tm{Tm:Main2}, for $u\leq -1$, we denote 
\begin{Eq*}
\rF:=F-F[\q],\quad F[\q]=\q r^{-2}\d t\wedge\d r . 
\end{Eq*}
Then we can easily calculate that
\begin{Eq}\label{Eq:Ro_F0Z}
|u|^2\kl|F[\q]_{Z\Lb}\kr|+v^2\kl|F[\q]_{ZL}\kr|\lesssim& |u|^{\chi_K(Z)+1}.
\end{Eq}

To discuss the solution in $\rD_{-\infty,-1}^{1,\infty}$, we should first give its limit behavior on the null infinity. Here we give the following claim but postpone its proof to \Sn{Sn:PoCEo_FphiZ_oeni}.
\begin{claim}\label{Cm:Eo_FphiZ_oeni}
Assume that \eqref{Eq:crosd} and $\mathcal{E}_{\varepsilon_1}<\infty$ are satisfied. Then, for any $\chi(\bZ)\leq 2$, there are
\begin{Eq}\label{Eq:Eo_FphiZ_oeni}
&\sum_{i=1,2}\int_{-\infty}^{-1}\int_{\rS^2}|u|^{-2\chi_K(\bZ)}\kl|\INF{r(Col_i^{(\bZ)})}\kr|^2 \d\omega\d u\lesssim 1,\\
&(Col_1^{(\bZ)}):=|u|^{-\frac{\varepsilon_1}{2}}\kl(|u|(\alphab^{(\bZ)},\hD_\Lb\phi^{(\bZ)}), v(\rrho^{(\bZ)},\sigma^{(\bZ)},\sD\phi^{(\bZ)})\kr),\\
&(Col_2^{(\bZ)}):=|u|^{1-\varepsilon_1}v\kl(|u|J_\Lb^{(\bZ)},v\sJ^{(\bZ)}\kr).
\end{Eq}
For $\chi(\bZ)\leq 1$ and $u\leq -1$, there are
\begin{Eq}\label{Eq:Eo_FZ_onis}
&\lim_{v\rightarrow\infty}\kl(|u|^2\|{r\alphab^{(\bZ)}}\|_{L_\omega^2}+|u|\kl\|r^2(\rrho^{(\bZ)},\sigma^{(\bZ)})\kr\|_{L_\omega^2}+\|{r^3\alpha^{(\bZ)}}\|_{L_\omega^2}\kr)\lesssim|u|^{\chi_K(\bZ)+\frac{1+\varepsilon_1}{2}}.
\end{Eq}
Moreover, for $\chi(\bZ)\leq 2$, $2\leq p<\infty$ and $u\leq -1$, there are
\begin{Eq}\label{Eq:Eo_phiZ_onis}
\lim_{v\rightarrow\infty}\|r\phi^{(\bZ)}\|_{L_\omega^p}\lesssim |u|^{\chi_K(\bZ)+\frac{-1+\varepsilon_1}{2}}.
\end{Eq}
\end{claim}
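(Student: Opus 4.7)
The plan is to express all quantities at null infinity directly in terms of the scattering data $(\Alphab,\Phi;\q)$ by integrating the transport equations of \eqref{eq:trans:sigma} and \eqref{Eq:MKGe-ud_FZ} along $\mathcal{I}^{+}$, and then to read the required bounds off from $\mathcal{E}_{\varepsilon_1}$. By definition of the radiation field, $\INF{r\alphab}=\Alphab$, so the weighted $L^{2}_{u,\omega}$ bound for $\alphab^{(\emptyset)}$ in \eqref{Eq:Eo_FphiZ_oeni} is immediate from the $\mathcal{E}_{\varepsilon_1}$--norm. For $\rrho$ and $\sigma$, one first integrates the transport equations
\begin{Eq*}
\Lb[r^{2}\sigma]^{(\infty)}=-\rsdiv({}^{*}\Alphab),\qquad \Lb[r^{2}\rrho]^{(\infty)}=\rsdiv\Alphab-\Im(\Phi\overline{D_{u}\Phi})
\end{Eq*}
from $u=-\infty$. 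The compatibility conditions \eqref{Eq:crosd} guarantee that $[r^{2}\sigma]^{(\infty)}\to 0$ and $[r^{2}\rrho]^{(\infty)}\to 0$ as $u\to-\infty$, after the charge $\q$ has been subtracted off in passing from $\rho$ to $\rrho$. Then $|\INF{r^{2}\rrho}|$ and $|\INF{r^{2}\sigma}|$ are bounded pointwise in $u$ by $\int_{-\infty}^{u}|(\rsdiv\Alphab,\Im(\Phi\overline{D_{u}\Phi}))|\,du'$, which by Cauchy--Schwarz and the weights in $\mathcal{E}_{\varepsilon_1}$ yields the required $|u|^{-\varepsilon_{1}/2}v^{-1}$ decay after reintroducing $v^{-2}=r^{-2}$.

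For the commuted fields with $\chi(\bZ)\leq 2$, the scheme is the same. Because $\LD_{L}$ vanishes on any field that has a finite radiation field, only the surviving directions $\Lb,\Omega_{ij}$ and the weighted $K$-direction contribute at $\mathcal{I}^{+}$, and the commutation relations \eqref{Eq:CofdaZ} convert $\LD_{\bZ}$ acting on $r^{k}(\alphab,\rrho,\sigma,\phi)$ into $\partial_{u}^{n}\partial_{\omega}^{\beta}$ derivatives of $(\Alphab,\Phi)$ together with lower-order factors; each $K$ produces at worst a $|u|^{2}$ weight, precisely matching the $|u|^{-2\chi_{K}(\bZ)}$ normalization on the left-hand side of \eqref{Eq:Eo_FphiZ_oeni}. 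The $r^{3}\alpha^{(\bZ)}$ case follows by integrating the $\Lb$-transport equation for $\alpha$ from $u=-\infty$, using the already-controlled limits of $\rho,\sigma,\phi$ on the right-hand side; again the compatibility conditions make the integration well-posed. The current bound $(Col_{2}^{(\bZ)})$ is then a direct consequence of the Leibniz expansion \eqref{Eq:Efo_JZ}, in which each factor is either $\hD\phi^{(\bZ')}$ (controlled by $(Col_{1})$) or $\phi\cdot F^{(\bZ')}$ (controlled by the same $(Col_{1})$ together with the $L^{p}_\omega$ bounds for $\phi$).

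The pointwise-in-$u$ bounds \eqref{Eq:Eo_FZ_onis} follow by applying the one-dimensional Sobolev embedding $L^{\infty}_{u}\hookrightarrow H^{1}_{u}$ to the primitives $\int_{-\infty}^{u}\Lb f\,du'$ constructed above; the loss of one power of $|u|^{1/2}$ is absorbed into the weight discrepancy between the $\varepsilon_{1}$ in $\mathcal{E}_{\varepsilon_{1}}$ and the exponent $(1+\varepsilon_{1})/2$ on the right-hand side. For \eqref{Eq:Eo_phiZ_onis}, one combines this with the Sobolev embedding $W^{1,2}(\mathbb{S}^{2})\hookrightarrow L^{p}(\mathbb{S}^{2})$ for every $p<\infty$, noting that $\mathcal{E}_{\varepsilon_{1}}$ provides control on enough angular derivatives of $D_{u}^{n}\Phi$ for this to apply with a single angular $\Omega$-loss.

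The main obstacle is bookkeeping around the large charge. Because $\q$ is not small, we cannot treat the term $\q r^{-2}$ perturbatively; instead its effect must be removed cleanly by the subtraction $\rrho=\rho-\q r^{-2}$, and this removal is consistent only because of the compatibility condition \eqref{Eq:crosd}. Keeping track of the correct $u$-weights through the iterated commutators with $K$ (which carries a $u^{2}$ weight) so that the final balance $|u|^{-2\chi_{K}(\bZ)}$ matches the scattering-data norm in $\mathcal{E}_{\varepsilon_{1}}$ is the most delicate purely algebraic point.
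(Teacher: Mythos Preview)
Your overall strategy---reconstruct the radiation-field limits of $\rrho,\sigma,\alpha,\phi,J$ from transport equations on $\mathcal{I}^{+}$ and feed them by $\mathcal{E}_{\varepsilon_1}$ with the compatibility condition \eqref{Eq:crosd} supplying the vanishing at $u\to-\infty$---matches the paper's. But there is a real gap in how you handle the $\bZ$-commutation.

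The assertion that ``$\LD_{L}$ vanishes on any field that has a finite radiation field, [so] only the surviving directions $\Lb,\Omega_{ij}$ and the weighted $K$-direction contribute at $\mathcal{I}^{+}$'' is not right. The vector field $K=2u^{2}\Lb+2v^{2}L$ contains a $v^{2}L\sim r^{2}L$ piece, and $r^{2}L$ does \emph{not} annihilate the radiation fields: for instance $L(r^{2}\rho)=-\sdiv(r^{2}\alpha)+r^{2}J_{L}$, so $\INF{(r^{2}L)(r^{2}\rrho)}=-\rsdiv\INF{r^{3}\alpha}+\INF{r^{4}J_{L}}$. Thus once you commute even one $K$ you generate quantities like $\INF{r^{3}\alpha}$, $\INF{r^{4}J_{L}}$, and iterated $(r^{2}L)^{n}$-derivatives that are not directly read off from $(\Alphab,\Phi)$. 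The paper handles this by setting up a full two-index family of energies $\varE[\,\cdot\,;N]$, $\varF[\,\cdot\,;N]$ tracking both $\Lb/\Omega$-derivatives and $(r^{2}L)^{n}$-derivatives, and closing an iteration among $\rho,\sigma,\alphab,\alpha,\phi,J_{\Lb},\sJ,J_{L}$ at null infinity.

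A second, related point you miss: for the $(r^{2}L)^{n}$-pieces with $n\geq 1$ one cannot in general integrate the $\Lb$-transport equations from $u=-\infty$, because there is no a~priori decay information for $\INF{r^{3}\alpha}$ or $\INF{(r^{2}L)(r^{2}\rrho)}$ there. The paper instead supplies these boundary values at $u=-1$, using the limits \eqref{Eq:Eo_rho_oini}--\eqref{Eq:Eo_phi_oini} extracted from the \emph{interior} construction (the conformal picture on $\trD_{0,1}^{0,1}$). In other words, the claim is not proved from the scattering data alone; it uses that you already have a solution near the time axis whose null-infinity limits at $u=-1$ are quantitatively controlled. Your outline does not invoke this input, and without it the $\alpha^{(\bZ)}$ and higher-$N$ $\rrho^{(\bZ)},\sigma^{(\bZ)}$ bounds cannot be closed.
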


On the other hand, 
by discussions in \Sn{Sn:Ado_H} and the fact $\rrho=\rho-\rho_0$ with $\rho_0= \q r^{-2}$,
we can also find that for any $\chi(\bZ)\leq 2$,   
there are
\begin{Eq}\label{Eq:Eo_FphiZ_o_HU*}
&\sum_{i=3,4}\int_{\rH_{U_*}^{-U_*,\infty}}|u|^{-2\chi_K(\bZ)}|(Col_i^{(\bZ)})|^2\lesssim C_{U_*},\\
&(Col_3^{(\bZ)}):=|u|^{-\frac{\varepsilon_1}{2}}\kl(|u|(\rrho^{(\bZ)},\sigma^{(\bZ)},\sD\phi^{(\bZ)}),v(\alpha^{(\bZ)},\hD_L\phi^{(\bZ)})\kr),\\
&(Col_4^{(\bZ)}):=|u|^{1-\varepsilon_1}v\kl(|u|\sJ^{(\bZ)},vJ^{(\bZ)}_L\kr).
\end{Eq}

Now, we are prepared to use the energy method to analyze the solution in $\rD_{+;-\infty,U_*}^{V_*,\infty}$ with some $U_*$ and $V_*$ fixed later.
In order to have a more detailed discussion, we split this region to an upper region $\rD_{u;-\infty,U_*}^{V_*,\infty}$ and a lower region $\rD_{l;-\infty,U_*}^{V_*,\infty}$, where $\rD_{u;U_1,U_2}^{V_1,V_2}:=\rD_{+;U_1,U_2}^{V_1,V_2}\cap \{v\geq |u|^{1+(\varepsilon_2-\varepsilon_1)/R}\}$ and $\rD_{l;U_1,U_2}^{V_1,V_2}:=\rD_{+;U_1,U_2}^{V_1,V_2}\cap \{v\leq |u|^{1+(\varepsilon_2-\varepsilon_1)/R}\}$ with $\varepsilon_1$, $\varepsilon_2$ are given by \Tm{Tm:Main2} and $R\gg 1$ will be fixed later (see the figure below).
\begin{figure}[H]
\centering
\includegraphics[width=0.7\textwidth]{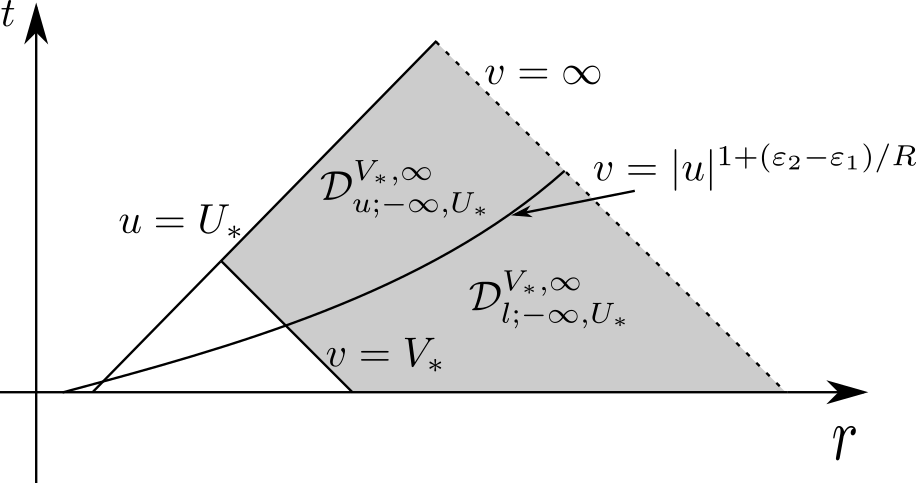}
\end{figure}

\subsection{The bootstrap ansatz}\label{Sn:Tbaiur}
In the rest of this section, we are going to discuss $(F,\phi)$ on $\rD_{u;-\infty,U_*}^{V_*,\infty}$,
and postpone the discussion on $\rD_{l;-\infty,U_*}^{V_*,\infty}$ to the next section.
Thus,
in what follows,
we always assume that $(u,v,\omega)\in \rD_{u;-\infty,U_*}^{V_*,\infty}$,
and fix the $\kappa$ in \Sn{Tmvfaee} to be $|u|^{-\varepsilon_1}$. 
Then, in \eqref{Eq:Eet} we have
\begin{Eq*}
-\frac{u^2L\kappa}{2}=0,\qquad -\frac{v^2\Lb\kappa}{2}=-\frac{\varepsilon_1}{2}|u|^{-1-\varepsilon_1}v^2<0,\qquad -\frac{K\kappa}{4}=-\frac{\varepsilon_1}{2}|u|^{1-\varepsilon_1}<0.
\end{Eq*}

We define the energy corresponding to fields $[f,G]$ to be
\begin{Eq*}
\varE_u[f,G](U,V):=&\int_{\rHb_{U,U_*}^{V}}|u|^{-\varepsilon_1}\kl(|u|^2\big|(\alphab[G],\hD_\Lb f)\big|^2+v^2\big|(\rho[G],\sigma[G],\sD f)\big|^2\kr)\\
&\quad+\int_{\rH_U^{V,\infty}}|u|^{-\varepsilon_1}\kl(|u|^2\big|(\rho[G],\sigma[G],\sD f)\big|^2+v^2\big|(\alpha[G],\hD_{L} f)\big|^2\kr),
\end{Eq*}
and define the energy corresponding to the current $J$ to be
\begin{Eq*}
\varE_u[J](U,V)
:=&\int_{\rHb_{U,U_*}^V}|u|^{2-2\varepsilon_1}v^2 \kl(|u|^2|J_{\Lb}|^2+v^2|\sJ|^2\kr)+\int_{\rH_U^{V,\infty}}|u|^{2-2\varepsilon_1}v^2\kl(|u|^2|\sJ|^2+v^2|J_L|^2\kr).
\end{Eq*}

Then, for any given $\varepsilon_1$, $\varepsilon_2$, $R$ and $\chi(\bZ)\leq 2$, 
we set the  bootstrap ansatz to be
\begin{btsz}\label{Bz:u}
\begin{Eq}\label{Eq:Eoeiur-FphiJ}
\varE_u[\phi^{(\bZ)},\rF^{(\bZ)}](U,V)\leq& 8M_1|U|^{2\chi_K(\bZ)},\\
\varE_u[J^{(\bZ)}](U,V)\leq& 2M_1|U|^{2\chi_K(\bZ)+1+4\delta}.
\end{Eq}
\end{btsz}
In the beginning, we fix a constant $M_1$ to be much larger than the constants in \eqref{Eq:Eo_FphiZ_oeni}.
Meanwhile,  for any fixed $U_*$, we are able to find a $V_*$ such that the integral in \eqref{Eq:Eo_FphiZ_o_HU*} with $\rH_{U_*}^{-U_*,\infty}$ replaced by $\rH_{U_*}^{V_*,\infty}$ is smaller than $M_1/2$. 
These ensure that \eqref{Eq:Eoeiur-FphiJ} is held for $U$ sufficiently close to $U_*$ and $V$ sufficiently large.

Then, we will show that 
there exist a $U_1\leq -1$ depends on the initial data,
such that the constant $8$ and $2$ in \eqref{Eq:Eoeiur-FphiJ} could be improved to $4$ and $1$ 
while $U\leq U_*\leq U_1$, $V\geq V_*$.
This means that \eqref{Eq:Eoeiur-FphiJ} will be held for all $(U,V)$ in such region.

To show this result, we first show several results while \eqref{Eq:Eoeiur-FphiJ} are held.

\subsection{Estimate of the Maxwell field on sphere}\label{Sn:EotMfosiur}
In this subsection, we will give the $L_\omega^p$ estimate of the Maxwell field on $\rS_u^v$.
\begin{lemma}\label{La:Eo_FZ_osiur}
Assume that \eqref{Eq:Eo_FZ_onis} and \eqref{Eq:Eoeiur-FphiJ} are satisfied.
For $\chi(\bZ)=1$, $p=2$, or $\chi(\bZ)=0$, $2\leq p<\infty$, we have
\begin{Eq}\label{Eq:Eo_FZ_osiur}
&|u|^2\|\alphab^{(\bZ)}\|_{L_\omega^p}+|u|v\kl\|\kl(\rrho^{(\bZ)},\sigma^{(\bZ)}\kr)\kr\|_{L_\omega^p}+v^2\|\alpha^{(\bZ)}\|_{L_\omega^p}
\lesssim|u|^{\chi_K(\bZ)+\frac{1+\varepsilon_1}{2}}v^{-1}.
\end{Eq}
\end{lemma}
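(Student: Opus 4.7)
The plan is to control each null-frame Maxwell component on $\rS_u^v$ by integrating the appropriate transport equation from \eqref{Eq:MKGe-ud_FZ} starting from a boundary where data is already controlled, and then converting the transported flux into a pointwise-in-$(u,v)$ estimate of $L^2_\omega$ norms by combining Minkowski's integral inequality with Cauchy--Schwarz paired against the bootstrap ansatz \eqref{Eq:Eoeiur-FphiJ}. The essential observation is that the upper-region constraint $v\geq |u|^{1+\varepsilon_3}$ with $\varepsilon_3=(\varepsilon_2-\varepsilon_1)/R$ lets one trade a positive power of $v^{-1}$ for a negative power of $|u|$, which is precisely what absorbs both the loss coming out of Cauchy--Schwarz and the mild $|U|^{1+4\delta}$ growth carried by the current bootstrap term.

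For $\rrho^{(\bZ)}$, $\sigma^{(\bZ)}$, and $\alphab^{(\bZ)}$ I would integrate the $L$-transport equations of \eqref{Eq:MKGe-ud_FZ} from $v'=\infty$ back to $\rS_u^v$. The boundary terms at null infinity are supplied by \eqref{Eq:Eo_FZ_onis} with the $\chi(\bZ)\leq 1$ weights $|u|^{\chi_K(\bZ)+\frac{1+\varepsilon_1}{2}}$, matching the target exactly. The source terms $\sdiv(r^2\alpha^{(\bZ)})$, $\snabla(r\rrho^{(\bZ)})$, $\snabla(r\sigma^{(\bZ)})$, $r^2J^{(\bZ)}_L$ are estimated in $L^2_\omega$ after trading each angular derivative for a commuted $\Omega$; this replaces $\bZ$ by $\bZ'=(\Omega,\bZ)$ with $\chi(\bZ')\leq 2$ but the same $\chi_K(\bZ')=\chi_K(\bZ)$, so the enlarged component remains inside the bootstrap hypothesis. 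For $\alpha^{(\bZ)}$ the analogous $L$-transport equation is unavailable, so I would use the $\Lb$-equation on the last line of \eqref{Eq:MKGe-ud_FZ} and integrate in $u'$ from $u'=U_*$ down to $u$; the boundary trace on $\rH_{U_*}^{V_*,\infty}$ is provided by the estimates gathered in \Sn{Sn:Ado_H}, supplemented by a one-dimensional Sobolev in $v$ to upgrade $L^2$-in-$(v,\omega)$ control into pointwise-in-$v$ $L^2_\omega$ control, while the source terms are handled by the just-proven bounds for $\rrho^{(\bZ)},\sigma^{(\bZ)}$ and by the $\varE_u[J^{(\bZ)}]$ term of \eqref{Eq:Eoeiur-FphiJ}.

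For the case $\chi(\bZ)=0$, $2<p<\infty$, I would then appeal to the Sobolev embedding $W^{1,2}(\mathbb{S}^2)\hookrightarrow L^p(\mathbb{S}^2)$ in the form $\|G\|_{L^p_\omega}(u,v)\lesssim \|G\|_{L^2_\omega}+r^{-1}\|G^{(\Omega)}\|_{L^2_\omega}$, which reduces the $L^p_\omega$ claim to the $\chi(\bZ)=0,\,p=2$ case (treated directly above) together with the $\chi(\bZ)=1,\,p=2$ case with $\bZ=\Omega$ (so $\chi_K(\bZ)=0$), both already in hand. The principal obstacle is the weight bookkeeping. For a representative source such as $\int_v^\infty v'\|\alpha^{(\bZ')}(u,v')\|_{L^2_\omega}\,\d v'$, Cauchy--Schwarz against the weight $w(v')=|u|^{-\varepsilon_1}v'^4$ dictated by the bootstrap density produces $|u|^{\varepsilon_1/2}v^{-1/2}|u|^{\chi_K(\bZ')}$, whose gap to the target $|u|^{\chi_K(\bZ)+(\varepsilon_1-1)/2}$ is exactly $|u|^{1/2}v^{-1/2}\leq |u|^{-\varepsilon_3/2}\leq 1$ in the upper region. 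The analogous computation applied to the current term contributes an extra factor $|u|^{2\delta}$, absorbed whenever $4\delta<\varepsilon_3$; this is guaranteed by the choice $\delta\sim (1-\varepsilon_2)/100$ together with $\varepsilon_3=(\varepsilon_2-\varepsilon_1)/R$ for $R$ not too large. Pushing this weight balance through uniformly for every component, every permissible $\bZ$, and every $2\leq p<\infty$ is the main technical work of the argument.
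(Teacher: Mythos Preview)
Your plan for $\alphab^{(\bZ)},\rrho^{(\bZ)},\sigma^{(\bZ)}$ matches the paper: integrate the $L$-transport equations from $v=\infty$, with boundary data supplied by \eqref{Eq:Eo_FZ_onis} and sources absorbed by the bootstrap \eqref{Eq:Eoeiur-FphiJ} after trading $\snabla$ for $\Omega$. The paper packages the Cauchy--Schwarz step via the one-dimensional lemma \eqref{Eq:key_Eob1_2} and obtains the target bound directly, without needing the $|u|^{1/2}v^{-1/2}\le 1$ slack you invoke; in particular your side constraint $4\delta<\varepsilon_3$ is an artifact of an inefficient splitting and is not needed (and would in fact fail, since $R$ is later taken large).

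The genuine gap is in your treatment of $\alpha^{(\bZ)}$. Integrating $\Lb(r\alpha^{(\bZ)}_i)=\snabla_i(r\rrho^{(\bZ)})+{}^*\snabla_i(r\sigma^{(\bZ)})+r\sJ^{(\bZ)}_i$ from $u'=U_*$ has two problems. First, the lemma assumes only \eqref{Eq:Eo_FZ_onis} and \eqref{Eq:Eoeiur-FphiJ}; neither supplies a pointwise-in-$v$ $L^2_\omega$ bound for $\alpha^{(\bZ)}$ on $\rH_{U_*}$, and your proposed Sobolev upgrade of the $L^2(\rH_{U_*})$ estimates from \Sn{Sn:Ado_H} would require control of $L\alpha^{(\bZ)}$ there, which is not available. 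Second, for $\chi(\bZ)=1$ the source $\snabla(r\rrho^{(\bZ)})$ forces $\chi(\bZ')=2$, so only the $\rHb$-flux from \eqref{Eq:Eoeiur-FphiJ} is usable; Cauchy--Schwarz against $\int_{\rHb_{u,U_*}^{v}}|u'|^{-\varepsilon_1}v^2|\rrho^{(\bZ')}|^2$ then yields $\|r\alpha^{(\bZ)}\|_{L^2_\omega}\lesssim |u|^{\chi_K(\bZ)+(1+\varepsilon_1)/2}v^{-1}$, a full factor of $v$ short of the required $v^{-2}$.

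The paper's device is different. Expanding $\alpha[\LD_K \rF^{(\bZ)}]$ gives the algebraic identity
\[
\tfrac{1}{2}v\,\alpha^{(K,\bZ)}_i \;=\; r^{-1}uv^2\,\alpha^{(\bZ)}_i \;+\; L\bigl(v^3\alpha^{(\bZ)}_i\bigr) \;+\; r^{-1}u^2v\,\Lb(r\alpha^{(\bZ)}_i),
\]
and combining this with the $\Lb$-equation of \eqref{Eq:MKGe-ud_FZ} yields
\[
\bigl|L(v^3\alpha^{(\bZ)})\bigr|\;\lesssim\; v|\alpha^{(K,\bZ)}|+|u|v|\alpha^{(\bZ)}|+|u|^2\!\!\sum_{|\bOmega|\le 1}\!\bigl|(\rrho^{(\bOmega,\bZ)},\sigma^{(\bOmega,\bZ)})\bigr|+|u|^2|r\sJ^{(\bZ)}|.
\]
For $\chi(\bZ)\le 1$ one has $\chi(K,\bZ)\le 2$, so every term on the right is inside the bootstrap on $\rH_u^{V,\infty}$; integrating this \emph{$L$-equation} from $v=\infty$ with boundary data $\|r^3\alpha^{(\bZ)}\|_{L^2_\omega}\big|_{v=\infty}$ from \eqref{Eq:Eo_FZ_onis} gives $\|v^3\alpha^{(\bZ)}\|_{L^2_\omega}^2\lesssim|u|^{2\chi_K(\bZ)+1+\varepsilon_1}$. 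The point you are missing is that commuting once with $K$ trades one unit of $\chi_K$ (allowed by the bootstrap) for two extra powers of $v$ in the weight, which is precisely what recovers the $v^{-3}$ decay of $\alpha^{(\bZ)}$.
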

\begin{corollary}\label{Cy:Eo_FrZ_osiur}
Assume that \eqref{Eq:Eo_FZ_onis} and \eqref{Eq:Eoeiur-FphiJ} are satisfied.
For $\chi(\bZ)=1$, $p=2$, or $\chi(\bZ)=0$, $2\leq p<\infty$, we have
\begin{Eq}\label{Eq:Eo_FrZ_osiur}
|u|^2\|\rF^{(\bZ)}_{Z\Lb}\|_{L_\omega^p}+|u|v\sum_i\|\rF^{(\bZ)}_{Ze_i}\|_{L_\omega^p}+v^2\|\rF^{(\bZ)}_{ZL}\|_{L_\omega^p}\lesssim  |u|^{\chi_K(\bZ,Z)+\frac{1+\varepsilon_1}{2}}.
\end{Eq}
\end{corollary}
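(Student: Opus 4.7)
The proof strategy is to reduce \Cy{Cy:Eo_FrZ_osiur} directly to \La{La:Eo_FZ_osiur} by expanding each mixed component $\rF^{(\bZ)}_{ZY}$ in the null frame. For any $Z\in\mathcal{Z}=\{T,S,K\}\cup\{\Omega_{ij}\}$, the explicit null-frame expressions
\[
T=\tfrac{1}{2}(\Lb+L),\qquad S=u\Lb+vL,\qquad K=2u^{2}\Lb+2v^{2}L,\qquad \Omega_{ij}=rC_{ij}^{k}(\omega)e_{k}
\]
recorded in the notation subsection let me rewrite $\rF^{(\bZ)}_{ZY}$ as a linear combination of $\rF^{(\bZ)}_{\Lb Y}$, $\rF^{(\bZ)}_{LY}$ and $\rF^{(\bZ)}_{e_{a}Y}$ with coefficients that are polynomial in $(u,v)$ (or equal to $r\approx v$ times a bounded function of $\omega$ in the case $Z=\Omega_{ij}$).

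Next, I identify these null-frame entries with the null components of the chargeless field. Since $F[\q]=\q r^{-2}\d t\wedge\d r$ contributes only to $\rho$ (with $\rho_{0}=\q r^{-2}$), and since \Sn{Sn:P} already records $\LD_{Z}F[\q]=0$ for every $Z\in\mathcal{Z}$, I have $\alphab[\rF^{(\bZ)}]=\alphab^{(\bZ)}$, $\alpha[\rF^{(\bZ)}]=\alpha^{(\bZ)}$, $\sigma[\rF^{(\bZ)}]=\sigma^{(\bZ)}$ and $\rho[\rF^{(\bZ)}]=\rrho^{(\bZ)}$. Using $\rF_{\Lb L}=2\rrho$, $\rF_{\Lb e_{i}}=\alphab_{i}$, $\rF_{Le_{i}}=\alpha_{i}$ and $\rF_{e_{i}e_{j}}=\sigma\svE_{ij}$, together with the same identities applied to $\LD_{\bZ}F$, each $\rF^{(\bZ)}_{ZY}$ becomes an explicit linear combination of $\alphab^{(\bZ)}$, $\rrho^{(\bZ)}$, $\sigma^{(\bZ)}$ and $\alpha^{(\bZ)}$ weighted by explicit powers of $u$ and $v$.

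Finally, inserting the $L_{\omega}^{p}$ bounds from \La{La:Eo_FZ_osiur} and matching exponents closes the argument. For example, for $Z=K$ the bound $|u|v\|\rrho^{(\bZ)}\|_{L_{\omega}^{p}}\lesssim |u|^{\chi_{K}(\bZ)+(1+\varepsilon_{1})/2}v^{-1}$ combines with the $2v^{2}$ coefficient of $\Lb$ in $K$ to give
\[
|u|^{2}\|\rF^{(\bZ)}_{K\Lb}\|_{L_{\omega}^{p}}\lesssim v^{2}|u|^{2}\|\rrho^{(\bZ)}\|_{L_{\omega}^{p}}\lesssim |u|^{\chi_{K}(\bZ)+1+(1+\varepsilon_{1})/2}=|u|^{\chi_{K}(\bZ,K)+(1+\varepsilon_{1})/2},
\]
and the $|u|v\sum_{i}\|\rF^{(\bZ)}_{Ke_{i}}\|_{L_{\omega}^{p}}$ and $v^{2}\|\rF^{(\bZ)}_{KL}\|_{L_{\omega}^{p}}$ pieces are handled in the same way using $\alphab^{(\bZ)},\alpha^{(\bZ)}$. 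The three remaining cases $Z\in\{T,S,\Omega_{ij}\}$ all carry $\chi_{K}(Z)=0$, and their frame coefficients (of size $1$, $|u|$ or $v$, or $r\approx v$) are absorbed against the favorable $v^{-1}$ factor in \La{La:Eo_FZ_osiur} by means of $v\geq|u|$ on $\rD_{u;-\infty,U_{*}}^{V_{*},\infty}$. No step is delicate; the only care required is the charge-field identification of the null components and the $\chi_{K}$-bookkeeping in the case $Z=K$.
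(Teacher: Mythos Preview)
Your argument is correct and follows exactly the same route as the paper: express each $Z\in\mathcal{Z}$ in the null frame, use the resulting pointwise bounds $|\rF^{(\bZ)}_{ZY}|\lesssim$ (explicit $u,v$ weights)$\times|\alphab^{(\bZ)}|,|\rrho^{(\bZ)}|,|\sigma^{(\bZ)}|,|\alpha^{(\bZ)}|$, and then feed in \La{La:Eo_FZ_osiur} together with $v\geq|u|$. The only slip is verbal: in your $Z=K$ example the factor $2v^{2}$ is the coefficient of $L$ (not of $\Lb$) in $K$, which is what contracts nontrivially against $\Lb$; your displayed computation is nonetheless correct.
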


\begin{proof}[Proof of \La{La:Eo_FZ_osiur}]
\part[The estimate of $\alphab^{(\bZ)}$]
Using \eqref{Eq:MKGe-ud_FZ}, \eqref{Eq:EbudaLd} and \eqref{Eq:Eoeiur-FphiJ}, for $\chi(\bZ)\leq 1$, we know 
\begin{Eq*}
\int_{\rH_{U}^{V,\infty}} |L(r\alphab^{(\bZ)})|^2\lesssim&\int_{\rH_U^{V,\infty}}\sum_{|\bOmega|\leq 1}\kl|\kl(\rrho^{(\bOmega,\bZ)},\sigma^{(\bOmega,\bZ)}\kr)\kr|^2+\kl|r\sJ^{(\bZ)}\kr|^2\\
\lesssim& |U|^{2\chi_K(\bZ)-2+\varepsilon_1}+ |U|^{2\chi_K(\bZ)-3+2\varepsilon_1+4\delta}\\
\lesssim&|U|^{2\chi_K(\bZ)-3+\varepsilon_1}V.
\end{Eq*}
Then, using \eqref{Eq:key_Eob1_2} and \eqref{Eq:Eo_FZ_onis}, for $\chi(\bZ)\leq 1$, we find 
\begin{Eq*}
\int_{\rS^2}|r\alphab^{(\bZ)}|_{v=V}^2\d\omega
\lesssim& \lim_{v\rightarrow\infty}\int_{\rS^2}|r\alphab^{(\bZ)}|^2\d\omega+V^{-1}\int_V^\infty\int_{\rS^2} v^2|L(r\alphab^{(\bZ)})|^2\d \omega\d v\\
\lesssim& |u|^{2\chi_K(\bZ)-3+\varepsilon_1}.
\end{Eq*}
This, \eqref{Eq:EbudaLd} and the \emph{Sobolev} inequality gives the $\alphab$ part of \eqref{Eq:Eo_FZ_osiur}.

\part[The estimate of $\rrho^{(\bZ)}$ and $\sigma^{(\bZ)}$]
Using \eqref{Eq:MKGe-ud_FZ}, \eqref{Eq:EbudaLd} 
and \eqref{Eq:Eoeiur-FphiJ}, for $\chi(\bZ)\leq 1$, we similarly have  
\begin{Eq*}
\int_{\rH_{U}^{V,\infty}} \kl|L\kl(r^2(\rrho^{(\bZ)},\sigma^{(\bZ)})\kr)\kr|^2
\lesssim&\int_{\rH_U^{V,\infty}} \sum_{|\bOmega|\leq 1}\kl|r\alpha^{(\bOmega,\bZ)}\kr|^2+|r^2J^{(\bZ)}_L|^2
\lesssim |U|^{2\chi_K(\bZ)-1+\varepsilon_1}V.
\end{Eq*}
Then, using \eqref{Eq:key_Eob1_2} and \eqref{Eq:Eo_FZ_onis},  for $\chi(\bZ)\leq 1$,  we similarly find 
\begin{Eq*}
\int_{\rS^2}\kl|r^2(\rrho^{(\bZ)},\sigma^{(\bZ)})\kr|_{v=V}^2\d\omega\lesssim& |u|^{2\chi_K(\bZ)-1+\varepsilon_1}.
\end{Eq*}
This gives $\rrho$ and $\sigma$ parts of \eqref{Eq:Eo_FZ_osiur}.

\part[The estimate of $\alphab^{(\bZ)}$]
At first, for any $2$-form $G$, we have
\begin{Eq*}
\frac{1}{2}v\alpha[\LD_KG]_i=r^{-1}uv^2\alpha[G]_i+L\kl(v^3\alpha[G]_i\kr)+r^{-1}u^2v\Lb(r\alpha[G]_i).
\end{Eq*}
Then, by \eqref{Eq:MKGe-ud_FZ} and \eqref{Eq:EbudaLd}, we know 
\begin{Eq*}
|L(v^3\alpha[G])|\lesssim v|\alpha^{(K,\bZ)}|+|u|v \big|\alpha^{(\bZ)}\big|+|u|^2\sum_{|\bOmega|\leq 1}\kl|(\rrho^{(\bOmega,\bZ)},\sigma^{(\bOmega,\bZ)})\kr|+|u|^2|r\sJ^{(\bZ)}|.
\end{Eq*}
Using \eqref{Eq:Eoeiur-FphiJ}, for $\chi(\bZ)\leq 1$, we similarly have
\begin{Eq*}
\int_{\rH_{U}^{V,\infty}} |L(v^3\alpha^{(\bZ)}_i)|^2
\lesssim&\int_{\rH_U^{V,\infty}}v^2|\alpha^{(K,\bZ)}|^2+|U|^2\int_{\rH_U^{V,\infty}}v^2|\alpha^{(\bZ)}|^2\\
&\quad+|U|^4\int_{\rH_U^{V,\infty}}\sum_{|\bOmega|\leq 1}\kl|\kl(\rrho^{(\bOmega,\bZ)},\sigma^{(\bOmega,\bZ)}\kr)\kr|^2+\kl|r\sJ^{(\bZ)}\kr|^2\\
\lesssim&|U|^{2\chi_K(\bZ)+1+\varepsilon_1}V.
\end{Eq*}
Now, using \eqref{Eq:key_Eob1_2} and \eqref{Eq:Eo_FZ_onis}, for $\chi(\bZ)\leq 1$, we also find 
\begin{Eq*}
\int_{\rS^2}\kl|v^3\alpha^{(\bZ)}\kr|_{v=V}^2\d\omega\lesssim& |u|^{2\chi_K(\bZ)+1+\varepsilon_1}.
\end{Eq*}
This gives the $\alpha$ part of \eqref{Eq:Eo_FZ_osiur}, and finishes the proof.
\end{proof}

\begin{proof}[Proof of \Cy{Cy:Eo_FrZ_osiur}]
At first, for any $2$-form $G$ and $Z\in\mathcal{Z}$, we easily know
\begin{Eq*}
|G_{Z\Lb}|\lesssim& \begin{cases}
r\big|\alphab[G]\big|,&Z=\Omega,\\
v^{\chi_K(Z)+1}\big|\rho[G]\big|, &else.
\end{cases}\\
|G_{Ze_i}|\lesssim& \begin{cases}
r|\sigma[G]|,&Z=\Omega,\\
|u|^{\chi_K(Z)+1}\big|\alphab[G]\big|+v^{\chi_K(Z)+1}\big|\alpha[G]\big|, &else,
\end{cases}\\
|G_{ZL}|\lesssim& \begin{cases}
r|\alpha[G]|,&Z=\Omega,\\
|u|^{\chi_K(Z)+1}\big|\rho[G]\big|, &else.
\end{cases}\\
\end{Eq*}
Then, for any $\bZ$ and $p$ in \Cy{Cy:Eo_FrZ_osiur}, by \eqref{Eq:Eo_FZ_osiur}, we know
\begin{Eq*}
\|\rF^{(\bZ)}_{Z\Lb}\|_{L_\omega^p}\lesssim&v\|\alphab^{(\bZ)}\|_{L_\omega^p}+v^{\chi_K(Z)+1}\|\rrho^{(\bZ)}\|_{L_\omega^p}
\lesssim  |u|^{\chi_K(\bZ,Z)-\frac{3}{2}+\frac{\varepsilon_1}{2}},\\
\|\rF^{(\bZ)}_{Ze_i}\|_{L_\omega^p}\lesssim&|u|^{\chi_K(Z)+1}\|\alphab^{(\bZ)}\|_{L_\omega^p}+v\|\sigma^{(\bZ)}\|_{L_\omega^p}+v^{\chi_K(Z)+1}\|\alpha^{(\bZ)}\|_{L_\omega^p}
\lesssim|u|^{\chi_K(\bZ,Z)+\frac{-1+\varepsilon_1}{2}}v^{-1},\\
\|\rF^{(\bZ)}_{ZL}\|_{L_\omega^p}\lesssim&v\|\alpha^{(\bZ)}\|_{L_\omega^p}+|u|^{\chi_K(Z)+1}\|\rrho^{(\bZ)}\|_{L_\omega^p}
\lesssim |u|^{\chi_K(\bZ,Z)+\frac{1+\varepsilon_1}{2}}v^{-2}.
\end{Eq*}
This finishes the proof.
\end{proof}

\subsection{Estimate of the scalar field on sphere}\label{Sn:Eotsfosiur}
Setting $p_\delta:=\frac{4}{2-\delta}>2$, we will show that
\begin{lemma}\label{La:Eo_phiZ_osiur}
Assume that \eqref{Eq:Eo_phiZ_onis} and \eqref{Eq:Eoeiur-FphiJ} are satisfied.
Then, we have 
\begin{Eq}\label{Eq:Eo_phiZ_osiur}
\|\phi^{(\bZ)}\|_{L_\omega^p}
\lesssim&\begin{cases}
 |u|^{\chi_K(\bZ)+\frac{-1+\varepsilon_1}{2}}v^{-1},&
\begin{aligned}
&\chi(\bZ)=2,~p=2,~or~\chi(\bZ)=1,~2\leq p<\infty,\\
&\quad or~\chi(\bZ)=0,~2\leq p\leq\infty,
\end{aligned}\\
|u|^{\chi_K(\bZ)+\frac{-1+\varepsilon_1}{2}+\delta}v^{-1},& \chi(\bZ)=2,~2<p\leq p_\delta,~or~\chi(\bZ)=1,~p=\infty.
\end{cases}
\end{Eq}
We also have
\begin{Eq}\label{Eq:Eo_DphiZ_osiur}
&|u|^2\|\hD_\Lb\phi^{(\bZ)}\|_{L_\omega^p}+|u|v\|\sD\phi^{(\bZ)}\|_{L_\omega^p}+v^2\|\hD_L\phi^{(\bZ)}\|_{L_\omega^p}\\
\lesssim& \begin{cases}
|u|^{\chi_K(\bZ)+\frac{1+\varepsilon_1}{2}}v^{-1},&\chi(\bZ)=1,~p=2,~or~\chi(\bZ)=0,~2\leq p<\infty,\\
|u|^{\chi_K(\bZ)+\frac{1+\varepsilon_1}{2}+\delta}v^{-1},& \chi(\bZ)=1,~2<p\leq p_\delta,~or~\chi(\bZ)=0,~p=\infty.
\end{cases}
\end{Eq}
\end{lemma}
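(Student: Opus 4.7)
The plan is to estimate each component by integrating an $L$-direction relation from future null infinity down to the sphere $\rS_u^v$, using the null-infinity bounds of \Cm{Cm:Eo_FphiZ_oeni} and the additional limits worked out in \Sn{Sn:Ado_H} as ``initial data'', while controlling source terms by the bootstrap ansatz \eqref{Eq:Eoeiur-FphiJ} together with \La{La:Eo_FZ_osiur} and \Cy{Cy:Eo_FrZ_osiur}. I would start with the $L^2_\omega$ bound for $\phi^{(\bZ)}$. From the gauge-invariant identity
\begin{equation*}
L|r\phi^{(\bZ)}|^2=2\Re\bigl(\overline{r\phi^{(\bZ)}}\cdot r\hD_L\phi^{(\bZ)}\bigr),
\end{equation*}
integrating in $v$ from $\infty$ down to $V$ along $\rH_U$, taking $L^1(\rS^2)$, and applying Cauchy--Schwarz yields
\begin{equation*}
\|r\phi^{(\bZ)}\|_{L^2_\omega}^2(U,V)\leq \lim_{v\to\infty}\|r\phi^{(\bZ)}\|_{L^2_\omega}^2+2\int_V^\infty\|r\phi^{(\bZ)}\|_{L^2_\omega}\|r\hD_L\phi^{(\bZ)}\|_{L^2_\omega}\,dv.
\end{equation*}
The limit is controlled by \eqref{Eq:Eo_phiZ_onis}; the integral is handled by Cauchy--Schwarz in $v$ combined with the bootstrap bound on $\int v^2 r^2|\hD_L\phi^{(\bZ)}|^2$ (using $v^{-1}\lesssim|u|^{-1}$ in the upper exterior region to supply smallness), giving the $L^2_\omega$ bound with the required weight $|u|^{\chi_K(\bZ)+(-1+\varepsilon_1)/2}v^{-1}$.

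To upgrade to $L^p_\omega$ for $p>2$ I would apply the two-dimensional Gagliardo--Nirenberg inequality $\|f\|_{L^p_\omega}\lesssim\|f\|_{L^2_\omega}^{2/p}\|\snabla f\|_{L^2_\omega}^{1-2/p}$ on $\rS^2$. The spherical gradient $\snabla\phi^{(\bZ)}$ is compared to the gauge-invariant $\sD\phi^{(\bZ)}$, the connection correction being absorbed by \Cy{Cy:Eo_FrZ_osiur}, and $r\sD\phi^{(\bZ)}$ is reduced to $\phi^{(\Omega,\bZ)}$ via the commutator \eqref{Eq:Cocd}. For $\chi(\bZ)\leq 1$ the new multi-index $(\Omega,\bZ)$ is still within bootstrap reach, yielding the estimate for all $2\leq p<\infty$ without loss; $L^\infty$ follows from $W^{2,2}(\rS^2)\hookrightarrow L^\infty$ at $\chi(\bZ)=0$, while at $\chi(\bZ)=1$ interpolation between $L^2$ and $L^{p_\delta}$ forces the announced $|u|^\delta$ loss. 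For $\chi(\bZ)=2$ a third $\Omega$ derivative is unavailable, so interpolation again produces the $|u|^\delta$ loss and only $p\leq p_\delta=4/(2-\delta)$ is reached. For the derivative bounds \eqref{Eq:Eo_DphiZ_osiur}, $\sD\phi^{(\bZ)}$ is reduced to $\phi^{(\Omega,\bZ)}$ by the same commutator trick and the $\phi^{(\bullet)}$ estimate just obtained concludes. For $\hD_\Lb\phi^{(\bZ)}$ I would use the null-frame wave equation \eqref{Eq:MKGe-ud_phi} extended to $\phi^{(\bZ)}$ through \eqref{Eq:MKGe_phiZ}, which schematically reads
\begin{equation*}
D_L\bigl(r\hD_\Lb\phi^{(\bZ)}\bigr)=\sD^2(r\phi^{(\bZ)})-\I\rho\,r\phi^{(\bZ)}+r\,Q_\bZ,
\end{equation*}
and integrate in $v$ from $\infty$, with data furnished by the analogue of \eqref{Eq:Eo_phi_oini} applied to $\hD_\Lb\phi^{(\bZ)}$ and source $Q_\bZ$ bounded by \eqref{Eq:Eo_Q} together with \Cy{Cy:Eo_FrZ_osiur}. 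For $\hD_L\phi^{(\bZ)}$, whose $L$-transport is essentially trivial ($[L,D_L]=0$), an analogous integration of $L(r^2\hD_L\phi^{(\bZ)})$ from $v=\infty$ closes a Gronwall loop with the same type of initial data.

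The most delicate step will be controlling the nonlinear source $Q_\bZ$ in the $\hD_\Lb\phi^{(\bZ)}$ estimate. The expansion \eqref{Eq:MKGe_phiZ} produces top-order contributions such as $Q(\phi^{(Z_1)},F;Z_2)$ at $\chi(\bZ)=2$, and the weighted bound \eqref{Eq:Eo_Q} must be balanced precisely against the $L^p_\omega$ bounds from \Cy{Cy:Eo_FrZ_osiur} and the Sobolev upgrade of $\phi^{(\bZ)}$, so that the resulting $v$-integrals reproduce the RHS of \eqref{Eq:Eo_DphiZ_osiur} without any unexpected $|u|$-loss beyond the $|u|^\delta$ already inherited from interpolation at near-extremal Lebesgue indices. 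Careful bookkeeping of $\chi_K$ powers, the distribution of derivatives among factors in the quadratic sources, and the integrability thresholds of the $v$-weights is the core technical burden of the proof.
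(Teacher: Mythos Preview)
Your argument has a genuine gap at the top-order $L^p_\omega$ estimate, namely the case $\chi(\bZ)=2$, $2<p\leq p_\delta$ in \eqref{Eq:Eo_phiZ_osiur}. Your plan is to use the fixed-sphere Gagliardo--Nirenberg inequality, which requires $\|\sD\phi^{(\bZ)}\|_{L^2_\omega}$ (equivalently $\|\phi^{(\Omega,\bZ)}\|_{L^2_\omega}$) \emph{pointwise} on each $\rS_u^v$. At $\chi(\bZ)=2$ this is a third commuted derivative and is \emph{not} available: the bootstrap \eqref{Eq:Eoeiur-FphiJ} only controls $\int_{\rH_u}|u|^{2-\varepsilon_1}|\sD\phi^{(\bZ)}|^2$, an integrated quantity along the cone, and your earlier integration-from-infinity step cannot upgrade this to a sphere bound without yet another $L$-derivative. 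Your sentence ``interpolation again produces the $|u|^\delta$ loss'' is a placeholder with no second endpoint to interpolate against. The paper closes this gap by a different mechanism: \La{La:key_42} is a spacetime Sobolev--trace inequality that bounds $\|\phi^{(\bZ)}\|_{L^4_\omega}$ at a fixed $v$ by the \emph{integrated} $L^2$ norms of $\phi^{(\bZ)}$, $D_\Omega\phi^{(\bZ)}$, $D_L\phi^{(\bZ)}$ along $\rH_u^{V,\infty}$, quantities that sit inside the bootstrap energy $\varE_u[\phi^{(\bZ)},0]$ at $\chi(\bZ)=2$. This yields an $L^4_\omega$ bound with no $v$-decay; the $v^{-1}$ decay is then recovered by a separate integration of $L|r\phi^{(\bZ)}|^p$ from infinity, interpolating $L^{2p-2}_\omega$ between the $L^2_\omega$ and $L^4_\omega$ bounds already obtained.

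Your route to \eqref{Eq:Eo_DphiZ_osiur} via the wave equation \eqref{Eq:MKGe-ud_phi}--\eqref{Eq:MKGe_phiZ} is unnecessarily heavy and, for $\hD_L\phi^{(\bZ)}$, likely does not close: integrating $L(r^2\hD_L\phi^{(\bZ)})$ requires a second $D_L$ on $r\phi^{(\bZ)}$, which at $\chi(\bZ)=1$ is a third derivative not in the bootstrap. The paper bypasses all of this with the algebraic identities $-ru\,\hD_\Lb=\hD_K-v\,\hD_S$ (up to constants), $rv\,\hD_L=\hD_K-u\,\hD_S$, and $r\sD\sim\hD_\Omega$, which immediately give
\[
\|\hD_\Lb\phi^{(\bZ)}\|_{L^p_\omega}\lesssim |u|^{-1}v^{-1}\bigl(\|\phi^{(K,\bZ)}\|_{L^p_\omega}+v\|\phi^{(S,\bZ)}\|_{L^p_\omega}\bigr),
\]
and similarly for $\sD$, $\hD_L$. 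Thus \eqref{Eq:Eo_DphiZ_osiur} at $\chi(\bZ)\leq 1$ follows directly from \eqref{Eq:Eo_phiZ_osiur} at $\chi(\bZ)\leq 2$, with no transport equation, no nonlinear source $Q_\bZ$, and no bookkeeping of $\chi_K$ beyond noting $\chi_K(K,\bZ)=\chi_K(\bZ)+1$.
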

\begin{corollary}\label{Cy:So_J_osiur}
Assume that \eqref{Eq:Eo_phiZ_onis} and \eqref{Eq:Eoeiur-FphiJ} are satisfied.
For $\chi(\bZ)=1$, $p=2$ or $\chi(\bZ)=0$, $2\leq p<\infty$, we have
\begin{Eq}\label{Eq:Eo_J_osiur}
|u|^2\|J^{(\bZ)}_\Lb\|_{L_\omega^p}+|u|v\|\sJ^{(\bZ)}\|_{L_\omega^p}+v^2\|J^{(\bZ)}_L\|_{L_\omega^p}\lesssim |u|^{\chi_K(\bZ)+\varepsilon_1}v^{-2}.
\end{Eq}
\end{corollary}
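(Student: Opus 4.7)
\textbf{Proof strategy for \Cy{Cy:So_J_osiur}.} The plan is to unfold $J^{(\bZ)}_\mu$ via the expansion formula \eqref{Eq:Efo_JZ} and bound each resulting product pointwise in $v$ by a Hölder inequality on $\mathbb{S}^2$, inserting the $L^p_\omega$ estimates provided by \La{La:Eo_phiZ_osiur} and \Cy{Cy:Eo_FrZ_osiur}.

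In the base case $\chi(\bZ)=0$ we have simply $J_\mu=\Im(\phi\cdot\overline{\hD_\mu\phi})$. Splitting via $L^p_\omega\subset L^{2p}_\omega\cdot L^{2p}_\omega$ and invoking the $\chi(\bZ)=0$, $q=2p$ instances of \eqref{Eq:Eo_phiZ_osiur} and \eqref{Eq:Eo_DphiZ_osiur} (which are valid in the full range $2\leq 2p<\infty$), the two scalar factors contribute $|u|^{(-1+\varepsilon_1)/2}v^{-1}$ and $|u|^{(1+\varepsilon_1)/2}v^{-1}$ respectively, the latter after being combined with the prescribed weight $|u|^2$, $|u|v$, or $v^2$ according to the component $\Lb$, $e_i$, or $L$. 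The product matches the target $|u|^{\varepsilon_1}v^{-2}$.

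In the case $\chi(\bZ)=1$ with $\bZ=(Z)$ and $p=2$, \eqref{Eq:Efo_JZ} produces three types of terms. I would treat the two bilinear pieces $\phi^{(Z)}\overline{\hD_\mu\phi}$ and $\phi\,\overline{\hD_\mu\phi^{(Z)}}$ by Hölder as $L^4_\omega\times L^4_\omega$, using the $(\chi=1,p=4)$ and $(\chi=0,p=4)$ lines of \La{La:Eo_phiZ_osiur}. For the cubic term $-|\phi|^2F_{Z\mu}$ I would decompose $F=\rF+F[\q]$: the chargeless part is bounded through $\||\phi|^2\|_{L^4_\omega}\|\rF^{(Z)}_{Z\mu}\|_{L^4_\omega}$ via \Cy{Cy:Eo_FrZ_osiur}, while the charge contribution uses $\||\phi|^2\|_{L^2_\omega}\leq\|\phi\|_{L^4_\omega}^2$ together with the pointwise bound \eqref{Eq:Ro_F0Z}. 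Tracking exponents, each summand lands at the target $|u|^{\chi_K(Z)+\varepsilon_1}v^{-2}$.

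There is no serious obstacle; the proof is essentially bookkeeping, and the only decision is which Hölder splitting to use. The one subtlety worth flagging is that in the cubic term the charge contribution $F[\q]$ actually dominates the chargeless $\rF$ contribution (since $\|\rF^{(Z)}_{Z\Lb}\|_{L^4_\omega}\lesssim|u|^{\chi_K(Z)-3/2+\varepsilon_1/2}$ is smaller than the pointwise size $|u|^{\chi_K(Z)-1}$ of $F[\q]_{Z\Lb}$ whenever $\varepsilon_1<1$), but it still closes at precisely the claimed rate because $\||\phi|^2\|_{L^2_\omega}\lesssim|u|^{-1+\varepsilon_1}v^{-2}$ multiplies the $F[\q]$ factor to give $|u|^{\chi_K(Z)+\varepsilon_1}v^{-2}$ after multiplication by $|u|^2$. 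For the $e_i$ component one notes that $F[\q]_{Ze_i}\equiv 0$, and for the $L$ component one uses the $v^{-2}$ gain from \eqref{Eq:Ro_F0Z}, so the same target is reached.
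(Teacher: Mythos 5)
Your overall route is the same as the paper's: expand $J^{(\bZ)}_\mu$ by \eqref{Eq:Efo_JZ}, apply H\"older on the sphere, insert the $L^p_\omega$ bounds of \La{La:Eo_phiZ_osiur} and \Cy{Cy:Eo_FrZ_osiur}, and split $F=\rF+F[\q]$ using \eqref{Eq:Ro_F0Z} for the charge part. Your base case, your treatment of $\phi^{(Z)}\overline{\hD_\mu\phi}$, and your handling of the cubic term (including the observation $F[\q]_{Ze_i}=0$ and the fact that the charge contribution saturates the rate while the chargeless one is subdominant) all close with the right exponents; note only the notational slip that the cubic term in \eqref{Eq:Efo_JZ} involves $\rF_{Z\mu}$, not $\rF^{(Z)}_{Z\mu}$ — the exponents you quote are in fact those of the undifferentiated field, so the substance is fine.

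The genuine problem is the $L^4_\omega\times L^4_\omega$ splitting of the second bilinear piece $\Im\big(\phi\cdot\overline{\hD_\mu\phi^{(Z)}}\big)$: the ``$(\chi=1,\,p=4)$'' bound for $\hD_\mu\phi^{(Z)}$ that you invoke is not provided by \La{La:Eo_phiZ_osiur}. Estimate \eqref{Eq:Eo_DphiZ_osiur} with $\chi(\bZ)=1$ gives only $p=2$ without loss and $2<p\leq p_\delta$ with an extra factor $|u|^{\delta}$, and $p_\delta=4/(2-\delta)$ is barely above $2$ since $\delta=(1-\varepsilon_2)/100$, so $p=4$ is out of range; even the lossy range would only yield $|u|^{\chi_K(\bZ)+\varepsilon_1+\delta}v^{-2}$, which is weaker than the stated \eqref{Eq:Eo_J_osiur}. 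The reason is structural: $\hD_\mu\phi^{(Z)}$ effectively carries two vector fields (in the proof of \La{La:Eo_phiZ_osiur} it is bounded by $\phi^{(K,\bZ)}$, $\phi^{(S,\bZ)}$, $\phi^{(\Omega,\bZ)}$), and at the two-derivative level only $p=2$ is available cleanly. The fix is the paper's splitting of this term as $\|\phi\|_{L^\infty_\omega}\|\hD_\mu\phi^{(Z)}\|_{L^2_\omega}$, which uses the available $\chi(\bZ)=0$, $p=\infty$ and $\chi(\bZ)=1$, $p=2$ cases and closes at exactly the claimed rate; with that substitution your argument goes through.
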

\begin{proof}[Proof of \La{La:Eo_phiZ_osiur}]
At first, using \eqref{Eq:key_Eob1_2}, \eqref{Eq:Eo_phiZ_onis} and \eqref{Eq:Eoeiur-FphiJ}, 
for $\chi(\bZ)\leq 2$, we have
\begin{Eq*}
&V^{-1}\int_{\rS^2}|r\phi^{(\bZ)}|^2_{v=V}\d\omega+ \int_V^{\infty}\int_{\rS^2}v^{-2}|r\phi^{(\bZ)}|^2\d\omega\d v\\
\lesssim & V^{-1}\lim_{v\rightarrow\infty}\int_{\rS^2}|r\phi^{(\bZ)}|^2\d\omega+V^{-2}\int_V^\infty\int_{\rS^2}v^2|D_L(r\phi^{(\bZ)})|^2\d\omega\d v\\
\lesssim&  |u|^{2\chi_K(\bZ)-1+\varepsilon_1}V^{-1}.
\end{Eq*}
Here we replace the $\partial$ by $D$ when using \eqref{Eq:key_Eob1_2} since obviously $\big|\partial_\mu|f|\big|\leq |D_\mu f|$.
This and the \emph{Sobolev} inequality give the first part of \eqref{Eq:Eo_phiZ_osiur}. 

To get another part of \eqref{Eq:Eo_phiZ_osiur}, we first estimate $\|\phi^{(\bZ)}\|_{L_\omega^4}$.
Using \eqref{Eq:key_42}, \eqref{Eq:Eo_phiZ_onis}, the above inequality and \eqref{Eq:Eoeiur-FphiJ},
for $\chi(\bZ)=2$, we find
\begin{Eq}\label{Eq:Eo_L4_phiZ2_iur}
\kl(\int_{\rS^2}|\phi^{(\bZ)}|^4\d \omega\kr)^{\frac{1}{2}}_{v=V}
\lesssim&\int_V^{\infty}\int_{\rS^2}|\phi^{(\bZ)}|^2+|D_\Omega \phi^{(\bZ)}|^2+|D_L \phi^{(\bZ)}|^2\d\omega\d v\\
\lesssim&   |u|^{2\chi_K(\bZ)-1+\varepsilon_1}V^{-1}+\int_{\rH_{u}^{V,\infty}}|\sD \phi^{(\bZ)}|^2+v^{-2}|\hD_L \phi^{(\bZ)}|^2\\
\lesssim& |u|^{2\chi_K(\bZ)-2+\varepsilon_1}.
\end{Eq}
Next, to make up the decay in $v$ direction, for $2<p\leq p_\delta$, we further calculate that 
\begin{Eq*}
&\kl(\int_{\rS^2}|r\phi^{(\bZ)}|^{p}\d \omega\kr)_{v=V}-\lim_{v\rightarrow\infty}\int_{\rS^2}|r\phi^{(\bZ)}|^{p}\d \omega\\
\lesssim& \int_V^\infty\int_{\rS^2}\kl|D_L(r\phi^{(\bZ)})\kr|\big|r\phi^{(\bZ)}\big|^{p-1}\d\omega\d v\\
\lesssim&\kl(\int_{\rH_u^{V,\infty}}v^2|\hD_L\phi^{(\bZ)}|^2 \kr)^{\frac{1}{2}}\kl(\int_V^\infty\int_{\rS^2}r^{-2}|r\phi^{(\bZ)}|^{2p-2}\d\omega\d v\kr)^{\frac{1}{2}}\\
\equiv&I_1\cdot I_2.
\end{Eq*}
Using \eqref{Eq:Eoeiur-FphiJ}, we know $I_1^2\lesssim |u|^{2\chi_K(\bZ)+\varepsilon_1}$.
As for $I_2$, using the results above, we know
\begin{Eq*}
I_2^2\lesssim& \int_V^\infty r^{2p-4}\|\phi^{(\bZ)}\|_{L_\omega^{2p-2}}^{2p-2}\d v
\lesssim\int_V^\infty v^{2p-4}\|\phi^{(\bZ)}\|_{L_\omega^2}^{6-2p}\|\phi^{(\bZ)}\|_{L_\omega^4}^{4p-8}\d v\\
\lesssim &|u|^{(2p-2)\chi_K(\bZ)+(p-1)\varepsilon_1-3p+5}\int_V^\infty v^{4p-10}\d v \\
\lesssim& |u|^{(2p-2)\chi_K(\bZ)+(p-1)\varepsilon_1+p-4},
\end{Eq*}
since $4p-10\leq 4p_\delta-10=(10\delta-4)/(2-\delta)<-1$.
Combing these with \eqref{Eq:Eo_phiZ_onis}, we then know
\begin{Eq*}
\|\phi^{(\bZ)}\|_{L_\omega^{p}}
\lesssim& \kl(|u|^{\chi_K(\bZ)+\frac{-1+\varepsilon_1}{2}}+|u|^{\chi_K(\bZ)+\frac{\varepsilon_1}{2}+\frac{p-4}{2p}}\kr)v^{-1}
\lesssim  |u|^{\chi_K(\bZ)+\frac{-1+\varepsilon_1}{2}+\frac{2p-4}{2p}}v^{-1}.
\end{Eq*}
Since that $(2p-4)/(2p)\leq(2p_\delta-4)/(2p_\delta)=\delta/2$,
this gives the second part of \eqref{Eq:Eo_phiZ_osiur}.

Finally, noticing that $-ruD_\Lb=D_K-vD_S$ and $rvD_L=D_K-uD_S$, for any $\bZ$ and $p$, we have
\begin{Eq*}
\|\hD_\Lb\phi^{(\bZ)}\|_{L_\omega^p}\leq& |u|^{-1}v^{-1}\kl(\|\phi^{(K,\bZ)}\|_{L_\omega^p}+v\|\phi^{(S,\bZ)}\|_{L_\omega^p}\kr),\\
\|\sD\phi^{(\bZ)}\|_{L_\omega^p}\leq& v^{-1}\|\phi^{(\Omega,\bZ)}\|_{L_\omega^p},\\
\|\hD_L\phi^{(\bZ)}\|_{L_\omega^p}\leq& v^{-2}\kl(\|\phi^{(K,\bZ)}\|_{L_\omega^p}+|u|\|\phi^{(S,\bZ)}\|_{L_\omega^p}\kr).
\end{Eq*}
These give the \eqref{Eq:Eo_DphiZ_osiur}, and finish the proof.
\end{proof}

\begin{proof}[Proof of \Cy{Cy:So_J_osiur}]
For $\bZ=Z$, using \eqref{Eq:Efo_JZ}, we find
\begin{Eq*}
\|J^{(Z)}_\mu\|_{L_\omega^2}\lesssim&\|\phi^{(Z)}\|_{L_\omega^4}\|\hD_\mu\phi\|_{L_\omega^4}+\|\phi\|_{L_\omega^\infty}\|\hD_\mu \phi^{(Z)}\|_{L_\omega^2}
+\|\phi\|_{L_\omega^\infty}^2 \|\rF_{Z\mu}\|_{L_\omega^2}+\|\phi\|_{L_\omega^4}^2 |F[\q]_{Z\mu}|.
\end{Eq*}
Then, using \eqref{Eq:Ro_F0Z}, \eqref{Eq:Eo_phiZ_osiur} and \eqref{Eq:Eo_DphiZ_osiur}, we have
\begin{Eq*}
\|J^{(Z)}_\Lb\|_{L_\omega^2}\lesssim& (1+ |u|^{-\frac{1}{2}})|u|^{\chi_K(Z)-2+\varepsilon_1}v^{-1}
\lesssim  |u|^{\chi_K(Z)-2+\varepsilon_1}v^{-1},\\
\|J^{(Z)}_{e_i}\|_{L_\omega^2}\lesssim& (1+ |u|^{-\frac{1}{2}})|u|^{\chi_K(Z)-1+\varepsilon_1}v^{-2}
\lesssim |u|^{\chi_K(Z)-1+\varepsilon_1}v^{-2},\\
\|J^{(Z)}_L\|_{L_\omega^2}\lesssim& (1+ |u|^{-\frac{1}{2}})|u|^{\chi_K(Z)+\varepsilon_1}v^{-3}
\lesssim  |u|^{\chi_K(Z)+\varepsilon_1}v^{-3}.
\end{Eq*}
This gives the $\chi(\bZ)=1$ part of \eqref{Eq:Eo_J_osiur}. 
The proof of the case that $\chi(\bZ)=0$ can be obtained similarly. We leave it to the interested reader.
\end{proof}

\subsection{Energy estimates of Maxwell field}
Here, we use the \eqref{Eq:Ee} with $\rD=\rD_{U,U_*}^{V,\infty}$, $[f,G]=[0,\rF^{(\bZ)}]$ and $\kappa=|u|^{-\varepsilon_1}$.
Through the discussions in \Sn{Tmvfaee} and \Sn{Sn:Tbaiur},
using \eqref{Eq:Eoeiur-FphiJ} and noticing $V\geq|U|\geq|U_*|$, we have
\begin{Eq*}
&\varE_u[0,\rF^{(\bZ)}](U,V)-M_1|U|^{2\chi_K(\bZ)}
\lesssim\int_\rDa |u|^{-\varepsilon_1}|(\rF^{(\bZ)})_K{}^{\mu}\nabla^{\nu}(\rF^{(\bZ)})_{\nu\mu}|\\
\lesssim&\int_\rDa |u|^{2-\varepsilon_1} \kl(|\rrho^{(\bZ)}||J^{(\bZ)}_\Lb|+|\alphab^{(\bZ)}||\sJ^{(\bZ)}|\kr)+|u|^{-\varepsilon_1}v^2\kl(|\rrho^{(\bZ)}||J^{(\bZ)}_L|+|\alpha^{(\bZ)}||\sJ^{(\bZ)}|\kr)\\
\lesssim&\int_V^{\infty}|U|^{\frac{\varepsilon_1}{2}}v^{-2}\kl(\int_{\rHb_{U,U_*}^{v}}|u|^{-\varepsilon_1}v^2|\rrho^{(\bZ)}|^2\kr)^{\frac{1}{2}}\kl(\int_{\rHb_{U,U_*}^{v}}|u|^{4-2\varepsilon_1}v^2|J^{(\bZ)}_\Lb|^2\kr)^{\frac{1}{2}}\d v\\
&\quad+\int_V^{\infty}|U|^{\frac{\varepsilon_1}{2}}v^{-2}\kl(\int_{\rHb_{U,U_*}^{v}}|u|^{2-\varepsilon_1}|\alphab^{(\bZ)}|^2\kr)^{\frac{1}{2}}\kl(\int_{\rHb_{U,U_*}^{v}}|u|^{2-2\varepsilon_1}v^4|\sJ^{(\bZ)}|^2\kr)^{\frac{1}{2}}\d v\\
&\quad+\int_{U}^{U_*}|u|^{-2+\frac{\varepsilon_1}{2}}\kl(\int_{\rH_{u}^{V,\infty}}|u|^{2-\varepsilon_1}|\rrho^{(\bZ)}|^2\kr)^{\frac{1}{2}}\kl(\int_{\rH_{u}^{V,\infty}}|u|^{2-2\varepsilon_1}v^4|J^{(\bZ)}_L|^2\kr)^{\frac{1}{2}}\d u\\
&\quad+\int_{U}^{U_*}|u|^{-2+\frac{\varepsilon_1}{2}}\kl(\int_{\rH_{u}^{V,\infty}}|u|^{-\varepsilon_1}v^2|\alpha^{(\bZ)}|^2\kr)^{\frac{1}{2}}\kl(\int_{\rH_{u}^{V,\infty}}|u|^{4-2\varepsilon_1}v^2|\sJ^{(\bZ)}|^2\kr)^{\frac{1}{2}}\d u\\
\lesssim&\int_V^{\infty}|U|^{2\chi_K(\bZ)+\frac{1+\varepsilon_1}{2}+2\delta}v^{-2}\d v+\int_U^{U_*}|u|^{2\chi_K(\bZ)+\frac{-3+\varepsilon_1}{2}+2\delta}\d u\\
\lesssim& |U_*|^{\frac{-1+\varepsilon_1}{2}+2\delta}|U|^{2\chi_K(\bZ)}.
\end{Eq*}
This means that for some constant $C_1$ independent with $U_*$, we have
\begin{Eq*}
\varE_u[0,\rF^{(\bZ)}](U,V)\leq \kl(1+C_1|U_*|^{\frac{-1+\varepsilon_1}{2}+2\delta}\kr)M_1|U|^{2\chi_K(\bZ)}.
\end{Eq*}

\subsection{Energy estimates of scalar field}
Here we only consider the case that $\bZ=(Z_1,Z_2)$.
The cases $\chi(\bZ)\leq 1$ can be obtained similarly and more simply, which are left to the interested reader. 
We use the \eqref{Eq:Ee} with $\rD=\rD_{U,U_*}^{V,\infty}$, $[f,G]=[\phi^{(\bZ)},0]$ and $\kappa=|u|^{-\varepsilon_1}$.
By the discussions in \Sn{Tmvfaee} and \Sn{Sn:Tbaiur}, we know
\begin{Eq*}
\varE_u[\phi^{(\bZ)},0](U,V)-M_1|U|^{2\chi_K(\bZ)}
\leq&\int_\rDa |u|^{-\varepsilon_1}\kl(|\Re(\overline{\square_A\phi^{(\bZ)}}\hD_K\phi^{(\bZ)})|+|\Im(\phi^{(\bZ)}\overline{\hD^\mu \phi^{(\bZ)}})F_{K\mu}|\kr)\\
\lesssim&\int_\rDa|u|^{2-\varepsilon_1}\kl(\kl|\kl(\square_A\phi^{(\bZ)},\phi^{(\bZ)}\rho\kr)\kr||\hD_\Lb\phi^{(\bZ)}|+|\phi^{(\bZ)}\alphab||\sD \phi^{(\bZ)}|\kr)\\
&\phantom{\int_\rDa}\quad +|u|^{-\varepsilon_1} v^2\kl(\kl|\kl(\square_A\phi^{(\bZ)},\phi^{(\bZ)}\rho\kr)\kr||\hD_L \phi^{(\bZ)}|+|\phi^{(\bZ)}\alpha||\sD \phi^{(\bZ)}|\kr).
\end{Eq*}
Among this, by \eqref{Eq:MKGe_phiZ}, we have
\begin{Eq}\label{Eq:Efo_boxphiZ2}
|\square_A\phi^{(\bZ)}|\lesssim&|Q(\phi^{(Z_1)},F;Z_2)|+|Q(\phi^{(Z_2)},F;Z_1)|+|Q(\phi,F;[Z_1,Z_2])|\\
&+|Q(\phi,F^{(Z_1)};Z_2)|+|F_{Z_1\mu}F_{Z_2}{}^\mu\phi|
\equiv\sum_{j=1}^5 I_j.
\end{Eq}
Here, using \eqref{Eq:Eo_Q} and results in \Sn{Sn:EotMfosiur} and \Sn{Sn:Eotsfosiur}, setting $q_\delta^{-1}=2^{-1}-p_\delta^{-1}$, we know
\begin{Eq*}
|u|^{-\chi_K(Z_2)}\|Q(\phi^{(Z_1)},F;Z_2)\|_{L_\omega^2}
\lesssim&\kl\|\kl(|u|J_\Lb,v\slashed{J},|u|^{-1}v^2J_L\kr)\kr\|_{L_\omega^4}\|\phi^{(Z_1)}\|_{L_\omega^4}
+\kl\|\kl(|u|\rrho,v\alpha\kr)\kr\|_{L_\omega^{q_\delta}}\|\hD_\Lb \phi^{(Z_1)}\|_{L_\omega^{p_\delta}}\\
&+|u||\rho_0|\|\hD_\Lb \phi^{(Z_1)}\|_{L_\omega^2}
+\kl\|\kl(|u|\alphab,v\sigma,|u|^{-1}v^2\alpha\kr)\kr\|_{L_\omega^{q_\delta}}\|\sD \phi^{(Z_1)}\|_{L_\omega^{p_\delta}}\\
&+\kl\|\kl(v\alphab,|u|^{-1}v^2\rrho\kr)\kr\|_{L_\omega^{q_\delta}}\|\hD_L\phi^{(Z_1)}\|_{L_\omega^{p_\delta}}
+|u|^{-1}v^2|\rho_0|\|\hD_L\phi^{(Z_1)}\|_{L_\omega^2}\\
&+\kl\|\kl(\rrho,\sigma\kr)\kr\|_{L_\omega^4}\|\phi^{(Z_1)}\|_{L_\omega^4}
+|\rho_0|\|\phi^{(Z_1)}\|_{L_\omega^2}\\
\lesssim&|u|^{\chi_K(Z_1)+\frac{-3+3\varepsilon_1}{2}}v^{-3}
+|u|^{\chi_K(Z_1)-1+\varepsilon+\delta}v^{-3}
+ |u|^{\chi_K(Z_1)+\frac{-1+\varepsilon_1}{2}}v^{-3},
\end{Eq*}
which implies that $\|I_1\|_{L_\omega^2}\lesssim  |u|^{\chi_K(\bZ)+\frac{-1+\varepsilon_1}{2}}v^{-3}$. Same estimates also hold for $I_2$ and for $I_3$ since that $\chi_K([Z_1,Z_2])\leq \chi_K(Z_1,Z_2)$.  
For $I_4$, we similarly know
\begin{Eq*}
&|u|^{-\chi_K(Z_2)}\|Q(\phi,F^{(Z_1)};Z_2)\|_{L_\omega^2}\\
\lesssim&\kl\|\kl(|u|J^{(Z_1)}_\Lb,v\slashed{J}^{(Z_1)},|u|^{-1}v^2J^{(Z_1)}_L\kr)\kr\|_{L_\omega^2}\|\phi\|_{L_\omega^\infty}
+\kl\|\kl(|u|\rho^{(Z_1)},v\alpha^{(Z_1)}\kr)\kr\|_{L_\omega^2}\|\hD_\Lb \phi\|_{L_\omega^\infty}\\
&+\kl\|\kl(|u|\alphab^{(Z_1)},v\sigma^{(Z_1)},|u|^{-1}v^2\alpha^{(Z_1)}\kr)\kr\|_{L_\omega^2}\|\sD \phi\|_{L_\omega^\infty}\\
&+\kl\|\kl(v\alphab^{(Z_1)},|u|^{-1}v^2\rho^{(Z_1)}\kr)\kr\|_{L_\omega^2}\|\hD_L\phi\|_{L_\omega^\infty}
+\kl\|\kl(\rho^{(Z_1)},\sigma^{(Z_1)}\kr)\kr\|_{L_\omega^2}\|\phi\|_{L_\omega^\infty}\\
\lesssim&|u|^{\chi_K(Z_1)+\frac{-3+3\varepsilon_1}{2}}v^{-3}+|u|^{\chi_K(Z_1)-1+\varepsilon_1+\delta}v^{-3},
\end{Eq*}
which implies that $\|I_4\|_{L_\omega^2}\lesssim  |u|^{\chi_K(\bZ)+\frac{-1+\varepsilon_1}{2}}v^{-3}$.
Finally, using \eqref{Eq:Eo_FrZ_osiur}, \eqref{Eq:Ro_F0Z} and \eqref{Eq:Eo_phiZ_osiur}, we can also find $\|I_5\|_{L_\omega^2}\lesssim |u|^{\chi_K(\bZ)+\frac{-1+\varepsilon_1}{2}}v^{-3}$.
Mixing these estimates, and using \eqref{Eq:Eo_FZ_osiur} and \eqref{Eq:Eo_phiZ_osiur}, we know
\begin{Eq*}
P_0:=\kl\|\kl(\square_A\phi^{(\bZ)},\phi^{(\bZ)}\rho\kr)\kr\|_{L_\omega^2}\lesssim \|\square_A\phi^{(\bZ)}\|_{L_\omega^2}+\|\phi^{(\bZ)}\|_{L_\omega^2}|\rho_0|+\|\phi^{(\bZ)}\|_{L_\omega^{p_\delta}}\|\rrho\|_{L_\omega^{q_\delta}}\lesssim  |u|^{\chi_K(\bZ)+\frac{-1+\varepsilon_1}{2}}v^{-3}.
\end{Eq*}

Now, by this estimate, \eqref{Eq:Eo_FZ_osiur}, \eqref{Eq:Eo_phiZ_osiur} and \eqref{Eq:Eoeiur-FphiJ}, we get that
\begin{Eq*}
&\varE_u[\phi^{(\bZ)},0](U,V)-M_1|U|^{2\chi_K(\bZ)}\\
\lesssim&  \int_V^{\infty}\kl(\int_{U}^{U_*} |u|^{2-\varepsilon_1}v^2P_0^2\d u\kr)^\frac{1}{2}\kl(\int_{\rHb_{U,U_*}^{v}}|u|^{2-\varepsilon_1}|\hD_\Lb\phi^{(\bZ)}|^2\kr)^{\frac{1}{2}}\d v\\
&\quad+\int_U^{U_*} \kl(\int_V^{\infty}|u|^{2-\varepsilon_1}v^2\|\phi^{(\bZ)}\|_{L_\omega^{p_\delta}}^2\|\alphab\|_{L_\omega^{q_\delta}}^2\d v\kr)^{\frac{1}{2}}\kl(\int_{\rH_{u}^{V,\infty}}|u|^{2-\varepsilon_1}|\slashed D\phi^{(\bZ)}|^2\kr)^{\frac{1}{2}}\d u\\
&\quad+\int_U^{U_*}\kl(\int_V^\infty |u|^{-\varepsilon_1}v^4P_0^2\d v\kr)^\frac{1}{2}\kl(\int_{\rH_u^{V,\infty}}|u|^{-\varepsilon_1}v^2|\hD_L\phi^{(\bZ)}|^2\kr)^{\frac{1}{2}}\d u\\
&\quad+\int_V^\infty \kl(\int_U^{U_*}|u|^{-\varepsilon_1}v^4\|\phi^{(\bZ)}\|_{L_\omega^{p_\delta}}^2\|\alpha\|_{L_\omega^{q_\delta}}^2\d u\kr)^{\frac{1}{2}}\kl(\int_{\rHb_{U,U_*}^v}|u|^{-\varepsilon_1}v^2|\slashed D\phi^{(\bZ)}|^2\kr)^{\frac{1}{2}}\d v\\
\lesssim& \int_V^{\infty}\kl(\int_{U}^{U_*} |u|^{2\chi_K(\bZ)+1}v^{-4}\d u\kr)^\frac{1}{2}\kl(|u|^{2\chi_K(\bZ)}\kr)^{\frac{1}{2}}\d v
 +\int_U^{U_*}\kl(\int_V^\infty |u|^{2\chi_K(\bZ)-1}v^{-2}\d v\kr)^\frac{1}{2}\kl(|u|^{2\chi_K(\bZ)}\kr)^{\frac{1}{2}}\d u\\
\lesssim& |U|^{2\chi_K(\bZ)+\frac{1}{2}}V^{-\frac{1}{2}}.
\end{Eq*}
Noticing that $V\geq |U|^{1+(\varepsilon_2-\varepsilon_1)/R}$ and $|U|>|U_*|$ in $\rD_{u;-\infty,U_*}^{V_*,\infty}$,
this inequality means that 
\begin{Eq*}
\varE_u[\phi^{(\bZ)},0](U,V)\leq \kl(1+C_1|U_*|^{-\frac{\varepsilon_2-\varepsilon_1}{2R}}\kr)M_1|U|^{2\chi_K(\bZ)}
\end{Eq*}
with some constant $C_1$ independent with $U_*$.

\subsection{Estimate of the current terms}
Again, we only give the discussion of the case that $\chi(\bZ)=2$. 
The cases that $\chi(\bZ)\leq 1$ are left to the interested readers.

In the beginning, by \eqref{Eq:Efo_JZ}, we know
\begin{Eq}\label{Eq:Efo_JZ2}
|J^{(Z_1,Z_2)}_\mu|\lesssim&|\phi^{(Z_1,Z_2)}|| \hD_\mu\phi|+|\phi^{(Z_2)}||\hD_\mu \phi^{(Z_1)}|+|\phi^{(Z_1)}|| \hD_\mu \phi^{(Z_2)}|+|\phi|| \hD_\mu \phi^{(Z_1,Z_2)}|\\
&\quad+|\phi||\phi^{(Z_1)}||F_{Z_2\mu}|+|\phi||\phi^{(Z_2)}||F_{Z_1\mu}|+|\phi|^2|F^{(Z_1)}_{Z_2\mu}|+|\phi|^2|F_{[Z_2,Z_1]\mu}| \\
\equiv& \sum_{j=1}^8 I_{j;\mu}(Z_1,Z_2).
\end{Eq}
Among this, using \eqref{Eq:Eo_phiZ_osiur}, \eqref{Eq:Eo_DphiZ_osiur}, \eqref{Eq:Ro_F0Z} and \eqref{Eq:Eo_FrZ_osiur}, we are able to find that
\begin{Eq*}
\kl(\sum_{j=1}^3+\sum_{j=5}^8\kr)\|I_{j;X}\|_{L_\omega^2}\lesssim&
\begin{cases}
|u|^{\chi_K(\bZ)-2+\varepsilon_1+\delta}v^{-2},&X=\Lb,\\
|u|^{\chi_K(\bZ)-1+\varepsilon_1+\delta}v^{-3},&X=e_i,~(i=1,2),\\
|u|^{\chi_K(\bZ)+\varepsilon_1+\delta}v^{-4},&X=L,
\end{cases}\\
\|I_{4;X}\|_{L_\omega^2}\lesssim& |u|^{\frac{-1+\varepsilon_1}{2}}v^{-1}\| \hD_X \phi^{(\bZ)}\|_{L_\omega^2}.
\end{Eq*}
Combining these and using \eqref{Eq:Eoeiur-FphiJ}, we get that
\begin{Eq*}
\varE_u[J^{(\bZ)}](U,V)
=&\int_{\rHb_{U,U_*}^V}|u|^{2-2\varepsilon_1}v^2 \kl(|u|^2|J^{(\bZ)}_{\Lb}|^2+v^2|\sJ^{(\bZ)}|^2\kr)+\int_{\rH_U^{V,\infty}}|u|^{2-2\varepsilon_1}v^2\kl(|u|^2|\sJ^{(\bZ)}|^2+v^2|J^{(\bZ)}_L|^2\kr)\\
\lesssim&\int_U^{U_*} |u|^{2\chi_K(\bZ)+2\delta}\d u+|U|\int_{\rHb_{U,U_*}^{V}}|u|^{-\varepsilon_1}\kl(|u|^2|\hD_\Lb \phi^{(\bZ)}|^2+v^2|\sD \phi^{(\bZ)}|^2\kr)\\
&\quad+\int_V^\infty |U|^{2\chi_K(\bZ)+2+2\delta}v^{-2}\d v+|U|\int_{\rH_{U}^{V,\infty}}|u|^{-\varepsilon_1}\kl(|u|^2|\sD\phi^{(\bZ)}|^2+v^2|\hD_\Lb\phi^{(\bZ)}|^2\kr)\\
\lesssim&|U|^{2\chi_K(\bZ)+1+2\delta}+|U|^{2\chi_K(\bZ)+2+2\delta}V^{-1}+|U|^{2\chi_K(\bZ)+1}\\
\lesssim & |U_*|^{-2\delta}|U|^{2\chi_K(\bZ)+1+4\delta}.
\end{Eq*}


\subsection{Close of \Bz{Bz:u}}
Through the discussions in last few subsections, we find that
\begin{Eq*}
\varE_u[\phi^{(\bZ)},\rF^{(\bZ)}](U,V)
\leq & \kl(2+C_1|U_*|^{\frac{-1+\varepsilon_1}{2}+2\delta}+C_1|U_*|^{-\frac{\varepsilon_2-\varepsilon_1}{2R}}\kr)M_1|U|^{2\chi_K(\bZ)},\\
\varE_u[J^{(\bZ)}](U,V)\leq&(C_1|U_*|^{-2\delta})M_1|U|^{2\chi_K(\bZ)+1+4\delta}
\end{Eq*}
with the constant $C_1$ does not depend on $U_*$. 
Thus, defining $U_1:=-1-{C_1}^{\max\kl\{\frac{2R}{\varepsilon_2-\varepsilon_1},\frac{1}{2\delta}\kr\}}$, we find
\begin{Eq*}
\varE_u[\phi^{(\bZ)},\rF^{(\bZ)}](U,V)\leq & 4M_1|U|^{2\chi_K(\bZ)},\\
\varE_u[J^{(\bZ)}](U,V)\leq&M_1|U|^{2\chi_K(\bZ)+1+4\delta},
\end{Eq*}
for all $U\leq U_*\leq U_1$, $V\geq V_*$.
This closes the \Bz{Bz:u}.

\subsection{Additional discussion on $\{v=|u|^{1+(\varepsilon_2-\varepsilon_1)/R}\}$}
To discuss the $(F,\phi)$ on $\rD_{l;-\infty,U_*}^{V_*,\infty}$, 
we should first give its behavior on the upper boundary of $\rD_{l;-\infty,U_*}^{V_*,\infty}$.
Thus, for any $U\leq U_*$, we use the \eqref{Eq:Ee} with $\rD=\rD_{u;U,U_*}^{V_*,\infty}$.
Using \eqref{Eq:Eoeiur-FphiJ}
and a simple modification of the processes in last few subsections, we easily find that for any $\chi(\bZ)\leq 2$ and $M_2$ much larger than $M_1$, we have
\begin{Eq}\label{Eq:Eo_FphiZ_o_uC}
&\int_{\varC_U} |u|^{-\varepsilon_1}\kl(|u|^2|(\alphab^{(\bZ)},\hD_\Lb \phi^{(\bZ)})|^2+v^2|(\rrho^{(\bZ)},\sigma^{(\bZ)},\sD\phi^{(\bZ)})|^2\kr)\vec{n}^\Lb\\
&\phantom{\int_{\rH_U}}\quad+|u|^{-\varepsilon_1}\kl(|u|^2|(\rrho^{(\bZ)},\sigma^{(\bZ)},\sD \phi^{(\bZ)})|^2+v^2|(\alpha^{(\bZ)},\hD_{L} \phi^{(\bZ)})|^2\kr)\vec{n}^L 
\leq M_2|U|^{2\chi_K(\bZ)},
\end{Eq}
where $\varC_U:=\{U\leq u\leq U_*, ~V_*\leq v=|u|^{1+(\varepsilon_2-\varepsilon_1)/R}\}$ and $\vec{n}$ is the normal vector of $\varC_U$.

\section{The analysis in exterior region, part 2}\label{Sn:Taierp2}
\subsection{The bootstrap ansatz}\label{Sn:Tbailr}
In this section, 
we give the discussion of $(F,\phi)$ in $\rD_{l;-\infty,U_*}^{V_*,\infty}$.
Thus, in what follows, 
we always assume $(u,v,\omega)\in \rD_{l;-\infty,U_*}^{V_*,\infty}$.
For the writing convenience, we denote $V_u=|u|^{1+(\varepsilon_2-\varepsilon_1)/R}$ and $U_v=-v^{R/(\varepsilon_2-\varepsilon_1+R)}$.
We also fix the $\kappa$ in \Sn{Tmvfaee} to be $|u|^{-R-\varepsilon_2}v^R$. 
Then, in \eqref{Eq:Eet} we have
\begin{Eq*}
-\frac{u^2L\kappa}{2}=&-\frac{R}{2}|u|^{2-R-\varepsilon_2}v^{-1+R}< 0,\quad -\frac{v^2\Lb\kappa}{2}=-\frac{R+\varepsilon_2}{2}|u|^{-1-R-\varepsilon_2}v^{2+R}<0,\\
 -\frac{K\kappa}{4}=&-\frac{1}{2}|u|^{-R-\varepsilon_2}v^R(Rv+(R+\varepsilon_2)|u|)<0.
\end{Eq*}

We define the energy corresponding to fields $[f,G]$ to be
\begin{Eq*}
\varE_l[f,G](U,V):=&\int_{\rHb_{U,U_V}^{V}}|u|^{-R-\varepsilon_2}v^R\kl(|u|^2\big|(\alphab[G],\hD_\Lb f)\big|^2+v^2\big|(\rho[G],\sigma[G],\sD f)\big|^2\kr)\\
&\quad+\int_{\rH_U^{V,V_U}}|u|^{-R-\varepsilon_2}v^R\kl(|u|^2\big|(\rho[G],\sigma[G],\sD f)\big|^2+v^2\big|(\alpha[G],\hD_{L} f)\big|^2\kr),
\end{Eq*}
and define the energy corresponding to the current $J$ to be
\begin{Eq*}
\varE_l[J](U,V)
:=&\int_{\rHb_{U,U_V}^V}|u|^{2-2R-2\varepsilon_2}v^{2+2R} \kl(|u|^2|J_{\Lb}|^2+v^2|\sJ|^2\kr)\\
&\quad+\int_{\rH_U^{V,V_U}}|u|^{2-2R-2\varepsilon_2}v^{2+2R}\kl(|u|^2|\sJ|^2+v^2|J_L|^2\kr).
\end{Eq*}

Then, for any given $\varepsilon_1$, $\varepsilon_2$ and $\chi(\bZ)\leq 2$, 
we set the  bootstrap ansatz to be
\begin{btsz}\label{Bz:l}
\begin{Eq}\label{Eq:Eoeilr-FphiJ}
\varE_l[\phi^{(\bZ)},\rF^{(\bZ)}](U,V)\leq& 8M_2|U|^{2\chi_K(\bZ)},\\
\varE_l[J^{(\bZ)}](U,V)\leq& 2M_2|U|^{2\chi_K(\bZ)+1+4\delta}.
\end{Eq}
\end{btsz} 
We easily find that \eqref{Eq:Eoeilr-FphiJ} is held for $(U,V)\in \rD_{l;-\infty,U_*}^{V_*,\infty}$ which sufficiently close to $\{V=V_U\}$, with $(U_*,V_*)$ as above.

Then, we will show that 
there exists $U_2\leq -1$ and $R$ which both depend on the initial data,
such that the constant $8$ and $2$ in \eqref{Eq:Eoeilr-FphiJ} could be improved to $4$ and $1$ 
while $U\leq U_*\leq U_2$, $V\geq V_*$.
This means that \eqref{Eq:Eoeilr-FphiJ} will hold for all $(U,V)$ in such region.

To show this result, we first show several results while \eqref{Eq:Eoeilr-FphiJ} are held.

\subsection{Estimate of the Maxwell field on sphere}\label{Sn:EotMfosilr}
Here, we will show that
\begin{lemma}\label{La:Eo_FZ_osilr}
Assume that \eqref{Eq:Eo_FZ_osiur} and \eqref{Eq:Eoeilr-FphiJ} are satisfied.
For $\chi(\bZ)=1$, $p=2$, or $\chi(\bZ)=0$, $2\leq p<\infty$, we have
\begin{Eq}\label{Eq:Eo_FZ_osilr}
&|u|^2\|\alphab^{(\bZ)}\|_{L_\omega^p}+|u|v\kl\|\kl(\rrho^{(\bZ)},\sigma^{(\bZ)}\kr)\kr\|_{L_\omega^p}+v^2\|\alpha^{(\bZ)}\|_{L_\omega^p}
\lesssim|u|^{\chi_K(\bZ)+\frac{1+R+\varepsilon_2}{2}}v^{-\frac{R}{2}-1}.
\end{Eq}
\end{lemma}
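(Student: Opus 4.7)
The argument will follow the same three-part structure as the proof of \La{La:Eo_FZ_osiur}, but with integration along the $L$-direction starting from the upper boundary $\varC_U = \{V_*\le v = V_u\}$ of $\rD_{l;-\infty,U_*}^{V_*,\infty}$ rather than from null infinity, and with the weight $v^R$ inserted throughout. The key observation that makes the two regions compatible is that on the interface $\{v=V_u\}$ one has $v^{R/2} = |u|^{(R+\varepsilon_2-\varepsilon_1)/2}$, so the target bound $|u|^{\chi_K(\bZ)+(1+R+\varepsilon_2)/2}v^{-R/2-1}$ agrees with \eqref{Eq:Eo_FZ_osiur} along $\varC_U$. Consequently the boundary control provided by \eqref{Eq:Eo_FphiZ_o_uC} matches continuously with the interior bounds produced by \Bz{Bz:l}.

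For $\alphab^{(\bZ)}$ (and analogously $\rrho^{(\bZ)}$ and $\sigma^{(\bZ)}$): I use the $L$-transport equations in \eqref{Eq:MKGe-ud_FZ}, multiply by an appropriate power of $v$, and invoke \eqref{Eq:Eoeilr-FphiJ} together with \eqref{Eq:EbudaLd} to estimate
\begin{Eq*}
\int_{\rH_U^{V,V_U}} v^R |L(r\alphab^{(\bZ)})|^2 \lesssim \int_{\rH_U^{V,V_U}} v^R \sum_{|\bOmega|\le 1}|(\rrho^{(\bOmega,\bZ)},\sigma^{(\bOmega,\bZ)})|^2 + v^R |r\sJ^{(\bZ)}|^2 ,
\end{Eq*}
and similarly for the zeroth-order $L^2_\omega$ norm of $r\alphab^{(\bZ)}$. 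Then \eqref{Eq:key_Eob1_2}, applied in the direction that integrates from $v=V_U$ inward to $v=V$ rather than outward to $v=\infty$, combined with the boundary contribution from \eqref{Eq:Eo_FphiZ_o_uC} for $|r\alphab^{(\bZ)}|^2_{v=V_U}$, yields the pointwise $L^2_\omega$ bound. A final application of \eqref{Eq:EbudaLd} and the spherical Sobolev inequality produces the $L^p_\omega$ estimate for $\chi(\bZ)=0$.

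For $\alpha^{(\bZ)}$: I will reuse the algebraic identity
\begin{Eq*}
\tfrac{1}{2} v \alpha[\LD_K G]_i = r^{-1} u v^2 \alpha[G]_i + L(v^3 \alpha[G]_i) + r^{-1} u^2 v \Lb(r\alpha[G]_i)
\end{Eq*}
already exploited in the proof of \La{La:Eo_FZ_osiur} to trade the unfavourable $\Lb$-transport of $\alpha$ for an $L$-transport of $v^3\alpha^{(\bZ)}$, at the cost of one extra $K$-derivative which is absorbed into the factor $|u|^{\chi_K(\bZ,K)}$. The resulting $L$-transport inequality is then treated by the same integration-from-$V_U$ procedure, giving the claimed bound for $\alpha^{(\bZ)}$.

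The principal obstacle is the careful bookkeeping of weights: one must verify that the bulk contribution coming from the factor $|u|^{-R-\varepsilon_2}v^R$ in $\varE_l$ and the boundary contribution from $\varC_U$ both produce terms of order $|u|^{2\chi_K(\bZ)+1+R+\varepsilon_2}v^{-R-2}$. The inequality $v\le V_u = |u|^{1+(\varepsilon_2-\varepsilon_1)/R}$ valid throughout $\rD_{l;-\infty,U_*}^{V_*,\infty}$ is precisely what makes these two contributions balance; once this is verified, the lemma follows routinely.
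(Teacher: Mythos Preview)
Your approach is essentially the same as the paper's: transport along $L$ from the interface $\{v=V_u\}$, weight by $v^R$, estimate the flux using \eqref{Eq:Eoeilr-FphiJ}, and apply \eqref{Eq:key_Eob1_2}; for $\alpha^{(\bZ)}$ you reuse the $\LD_K$ identity exactly as in \La{La:Eo_FZ_osiur}. The one slip is the source of the boundary term: the quantity $\int_{\rS^2}|r\alphab^{(\bZ)}|^2\,\d\omega\big|_{v=V_u}$ is controlled by the pointwise sphere estimate \eqref{Eq:Eo_FZ_osiur} (which is an explicit hypothesis of the lemma), not by \eqref{Eq:Eo_FphiZ_o_uC}, which is an \emph{integrated} energy over the hypersurface $\varC_U$ and does not yield a bound on a single sphere. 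You already observed the correct matching in your opening paragraph, so this is only a mislabelled citation; once you replace \eqref{Eq:Eo_FphiZ_o_uC} by \eqref{Eq:Eo_FZ_osiur} the argument goes through and coincides with the paper's.
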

\begin{corollary}\label{Cy:Eo_FrZ_osilr}
Assume that \eqref{Eq:Eo_FZ_osiur} and \eqref{Eq:Eoeilr-FphiJ} are satisfied.
For $\chi(\bZ)=1$, $p=2$, or $\chi(\bZ)=0$, $2\leq p<\infty$, we have
\begin{Eq}\label{Eq:Eo_FrZ_osilr}
|u|^2\|\rF^{(\bZ)}_{Z\Lb}\|_{L_\omega^p}+|u|v\sum_i\|\rF^{(\bZ)}_{Ze_i}\|_{L_\omega^p}+v^2\|\rF^{(\bZ)}_{ZL}\|_{L_\omega^p}\lesssim  |u|^{\chi_K(\bZ,Z)+\frac{1+R+\varepsilon_2}{2}}v^{-\frac{R}{2}}.
\end{Eq}
\end{corollary}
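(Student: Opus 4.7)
The plan is to mirror the proofs of \La{La:Eo_FZ_osiur} and \Cy{Cy:Eo_FrZ_osiur}, making only the changes forced by the new domain: we now integrate along $L$ inward on the finite interval $v\in[V,V_U]$ rather than outward to null infinity, and the ``initial'' data on the upper boundary $\{v=V_U\}$ is supplied by the upper-region estimate \eqref{Eq:Eo_FZ_osiur} evaluated at $v=V_U=|u|^{1+(\varepsilon_2-\varepsilon_1)/R}$. The bootstrap weight $|u|^{-R-\varepsilon_2}v^R$ of \eqref{Eq:Eoeilr-FphiJ} replaces $|u|^{-\varepsilon_1}$ throughout. A short bookkeeping check shows that at the matching surface $v=V_U$ the upper-region bound $|u|^{\chi_K(\bZ)+(1+\varepsilon_1)/2}v^{-1}$ and the target lower-region bound $|u|^{\chi_K(\bZ)+(1+R+\varepsilon_2)/2}v^{-R/2-1}$ coincide, so the gluing is consistent.

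For the lemma I would treat the three null components one after the other. First, for $\alphab^{(\bZ)}$, start from the transport equation $L(r\alphab^{(\bZ)}_i)=-\snabla_i(r\rho^{(\bZ)})+{}^*\snabla_i(r\sigma^{(\bZ)})+rJ^{(\bZ)}_{e_i}$ in \eqref{Eq:MKGe-ud_FZ}, bound $\int_{\rH_U^{V,V_U}}|L(r\alphab^{(\bZ)})|^2$ by $\rrho$, $\sigma$ (with one extra rotation, handled by \eqref{Eq:EbudaLd}) and $r\sJ$ through the bootstrap \eqref{Eq:Eoeilr-FphiJ}, then apply a finite-interval analogue of \eqref{Eq:key_Eob1_2} with the boundary value at $v=V_U$ supplied by \eqref{Eq:Eo_FZ_osiur}. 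A spherical Sobolev step produces the $L^p_\omega$ version for $\chi(\bZ)=0$. The estimates for $\rrho^{(\bZ)}$ and $\sigma^{(\bZ)}$ follow in the same way from $L(r^2\rho^{(\bZ)})=-\sdiv(r^2\alpha^{(\bZ)})+r^2 J^{(\bZ)}_L$ and $L(r^2\sigma^{(\bZ)})=-\sdiv(r^2{}^*\alpha^{(\bZ)})$. For $\alpha^{(\bZ)}$, apply the $K$-commutation identity used in the upper region,
\[
\frac{1}{2}v\alpha[\LD_K G]_i = r^{-1}uv^2\alpha[G]_i + L\kl(v^3\alpha[G]_i\kr) + r^{-1}u^2v\Lb(r\alpha[G]_i),
\]
to obtain a transport equation for $v^3\alpha^{(\bZ)}$ whose $L^2$-norm on $\rH_U^{V,V_U}$ is again controlled by the bootstrap, and then integrate inward from $V_U$.

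The corollary follows exactly as in \Cy{Cy:Eo_FrZ_osiur}: for each $Z\in\mathcal{Z}$ the components $\rF^{(\bZ)}_{Z\Lb}$, $\rF^{(\bZ)}_{Ze_i}$, $\rF^{(\bZ)}_{ZL}$ expand schematically in the null components of $\rF^{(\bZ)}$ with coefficients of size $r$, $v^{\chi_K(Z)+1}$, or $|u|^{\chi_K(Z)+1}$, after which substituting \eqref{Eq:Eo_FZ_osilr} produces \eqref{Eq:Eo_FrZ_osilr}.

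The main obstacle I expect is the weight bookkeeping. Unlike the upper region where $\kappa=|u|^{-\varepsilon_1}$ was a pure $u$-weight, the lower-region multiplier $|u|^{-R-\varepsilon_2}v^R$ couples $u$ and $v$, so several competing powers of $|U|$ and $V$ appear when Cauchy--Schwarz is applied to the transport estimates. Verifying that the $r\sJ$ contribution in the $\alphab$ transport equation, controlled only through the current bootstrap $\varE_l[J^{(\bZ)}]$ with its large power $|U|^{1+4\delta}$, stays subdominant to the $\rrho$ and $\sigma$ contributions will be the delicate step; this is precisely where taking $R$ large (in a way that depends on the total charge $\q$) becomes essential, and it is the same comparison that must then close the subsequent bootstrap argument.
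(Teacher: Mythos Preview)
Your plan is correct and matches the paper's approach exactly: the corollary is deduced from \La{La:Eo_FZ_osilr} by the same algebraic expansion of $\rF^{(\bZ)}_{Z\cdot}$ into null components used in \Cy{Cy:Eo_FrZ_osiur}, and the lemma itself is proved by transporting along $L$ on the finite interval $[V,V_U]$, with boundary data from \eqref{Eq:Eo_FZ_osiur} at $v=V_U$ and bulk control from the lower bootstrap \eqref{Eq:Eoeilr-FphiJ}. The gluing check you describe (that the upper and lower bounds agree at $v=V_U$) is right.

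Your stated ``main obstacle'' is misplaced, though. The $r\sJ^{(\bZ)}$ contribution to the $\alphab$ transport estimate is not delicate and does not rely on $R$ being large: after inserting $v^R$ into the integrand, the current bootstrap gives a term of size $|U|^{2\chi_K(\bZ)-3+2R+2\varepsilon_2+4\delta}V^{-R}$, and since $V\geq |U|$ and $\varepsilon_2+4\delta<1$ this is dominated by the $(\rrho,\sigma)$ contribution $|U|^{2\chi_K(\bZ)-2+R+\varepsilon_2}$ for any $R$. The place where taking $R$ large (depending on $\q$) is genuinely essential is not in this lemma or corollary at all but in the scalar-field energy estimate (\Sn{Sn:Taierp2}), where the bulk term carries a prefactor $R/2$ that must absorb the charge error; you should not expect that mechanism to enter here.
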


\begin{proof}[Proof of \La{La:Eo_FZ_osilr}]
\part[The estimate of $\alphab^{(\bZ)}$]
Similar to the estimate of $\alphab^{(\bZ)}$ in upper region, using \eqref{Eq:Eoeilr-FphiJ}, for $\chi(\bZ)\leq 1$, we have 
\begin{Eq*}
\int_{\rH_U^{V,V_U}}v^R |L(r\alphab^{(\bZ)})|^2
\lesssim&\int_{\rH_U^{V,V_U}}v^R\kl(\sum_{|\bOmega|\leq 1}\kl|\kl(\rrho^{(\bOmega,\bZ)},\sigma^{(\bOmega,\bZ)}\kr)\kr|^2+\kl|r\sJ^{(\bZ)}\kr|^2\kr)\\
\lesssim&  |U|^{2\chi_K(\bZ)-2+R+\varepsilon_2}+ |U|^{2\chi_K(\bZ)-3+2R+2\varepsilon_2+4\delta}V^{-R}\\
\lesssim&|U|^{2\chi_K(\bZ)-3+R+\varepsilon_2}V.
\end{Eq*}
Then, using \eqref{Eq:key_Eob1_2} and \eqref{Eq:Eo_FZ_osiur}, for $\chi(\bZ)\leq 1$,  we find 
\begin{Eq*}
\kl.\int_{\rS^2}|r\alphab^{(\bZ)}|^2\d\omega\kr|_{v=V}
\lesssim& \kl.\int_{\rS^2}|r\alphab^{(\bZ)}|^2\d\omega\kr|_{v=V_u}+V^{-1-R}\int_V^{V_u}\int_{\rS^2} v^{2+R}|L(r\alphab^{(\bZ)})|^2\d \omega\d v\\
\lesssim& |u|^{2\chi_K(\bZ)-3+R+\varepsilon_2}V^{-R}.
\end{Eq*}
This gives the $\alphab$ part of \eqref{Eq:Eo_FZ_osilr}.

\part[The estimate of $\rrho^{(\bZ)}$ and $\sigma^{(\bZ)}$]
Similar to the process in upper region, using \eqref{Eq:Eoeilr-FphiJ}, for $\chi(\bZ)\leq 1$, we have 
\begin{Eq*}
\int_{\rH_U^{V,V_U}} v^R\kl|L\kl(r^2(\rrho^{(\bZ)},\sigma^{(\bZ)})\kr)\kr|^2
\lesssim&\int_{\rH_U^{V,V_U}} v^R\kl(\sum_{|\bOmega|\leq 1}\kl|r\alpha^{(\bOmega,\bZ)}\kr|^2+|r^2J^{(\bZ)}_L|^2\kr)\\
\lesssim& |U|^{2\chi_K(\bZ)-1+R+\varepsilon_2}V.
\end{Eq*}
Now, using \eqref{Eq:key_Eob1_2} and \eqref{Eq:Eo_FZ_osiur},  for $\chi(\bZ)\leq 1$,  we similarly find 
\begin{Eq*}
\int_{\rS^2}\kl|r^2(\rrho^{(\bZ)},\sigma^{(\bZ)})\kr|_{v=V}^2\d\omega\lesssim& |u|^{2\chi_K(\bZ)-1+R+\varepsilon_2}V^{-R}.
\end{Eq*}
This gives the $\rrho$ part and the $\sigma$ part of \eqref{Eq:Eo_FZ_osilr}.

\part[The estimate of $\alphab^{(\bZ)}$]
Similar to the process in the upper region, using \eqref{Eq:Eoeilr-FphiJ}, for $\chi(\bZ)\leq 1$, we have
\begin{Eq*}
\int_{\rH_U^{V,V_U}} v^R|L(v^3\alpha^{(\bZ)}_i)|^2
\lesssim&\int_{\rH_U^{V,V_U}}v^{2+R}|\alpha^{(K,\bZ)}|^2+|U|^2\int_{\rH_U^{V,V_U}}v^{2+R}|\alpha^{(\bZ)}|^2\\
&+|U|^4\int_{\rH_U^{V,V_U}}v^R\kl(\sum_{|\bOmega|\leq 1}\kl|\kl(\rrho^{(\bOmega,\bZ)},\sigma^{(\bOmega,\bZ)}\kr)\kr|^2+\kl|r\sJ^{(\bZ)}\kr|^2\kr)\\
\lesssim&|U|^{2\chi_K(\bZ)+1+R+\varepsilon_2}V.
\end{Eq*}
Now, using \eqref{Eq:key_Eob1_2} and \eqref{Eq:Eo_FZ_osiur}, for $\chi(\bZ)\leq 1$, we again  find 
\begin{Eq*}
\int_{\rS^2}\kl|v^3\alpha^{(\bZ)}\kr|_{v=V}^2\d\omega\lesssim& |u|^{2\chi_K(\bZ)+1+R+\varepsilon_2}V^{-R}.
\end{Eq*}
This gives the $\alpha$ part of \eqref{Eq:Eo_FZ_osilr}, and finishes the proof.
\end{proof}

\begin{proof}[Proof of \Cy{Cy:Eo_FrZ_osilr}]
The proof of \Cy{Cy:Eo_FrZ_osilr} is the most same with that of \Cy{Cy:Eo_FrZ_osiur}. So we omit it here.
\end{proof}

\subsection{Estimate of the scalar field on sphere}\label{Sn:Eotsfosilr}
Here, we will show that
\begin{lemma}\label{La:Eo_phiZ_osilr}
Assume that \eqref{Eq:Eo_phiZ_osiur} and \eqref{Eq:Eoeilr-FphiJ} are satisfied.
Then, we have 
\begin{Eq}\label{Eq:Eo_phiZ_osilr}
\|\phi^{(\bZ)}\|_{L_\omega^p}
\lesssim&\begin{cases}
 |u|^{\chi_K(\bZ)+\frac{-1+R+\varepsilon_2}{2}}v^{-1-\frac{R}{2}},&
\begin{aligned}
&\chi(\bZ)=2,~p=2,~ or~\chi(\bZ)=1,~2\leq p<\infty,\\
&\quad or~\chi(\bZ)=0,~2\leq p\leq\infty,
\end{aligned}\\
|u|^{\chi_K(\bZ)+\frac{-1+R+\varepsilon_2}{2}+\delta}v^{-1-\frac{R}{2}},& 
\begin{aligned}
&\chi(\bZ)=2,~2<p\leq p_\delta,~ or~\chi(\bZ)=1,~p=\infty.
\end{aligned}
\end{cases}
\end{Eq}
We also have
\begin{Eq}\label{Eq:Eo_DphiZ_osilr}
&|u|^2\|\hD_\Lb\phi^{(\bZ)}\|_{L_\omega^p}+|u|v\|\sD\phi^{(\bZ)}\|_{L_\omega^p}+v^2\|\hD_L\phi^{(\bZ)}\|_{L_\omega^p}\\
\lesssim& \begin{cases}
|u|^{\chi_K(\bZ)+\frac{1+R+\varepsilon_2}{2}}v^{-1-\frac{R}{2}},&\chi(\bZ)=1,~p=2,~or~\chi(\bZ)=0,~2\leq p<\infty,\\
|u|^{\chi_K(\bZ)+\frac{1+R+\varepsilon_2}{2}+\delta}v^{-1-\frac{R}{2}},& \chi(\bZ)=1,~2<p\leq p_\delta,~or~\chi(\bZ)=0,~p=\infty.
\end{cases}
\end{Eq}
\end{lemma}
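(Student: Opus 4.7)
The plan is to mimic the proof of \La{La:Eo_phiZ_osiur} with two structural modifications appropriate to the lower region: (i) replace the null-infinity initial data \eqref{Eq:Eo_phiZ_onis} by the pointwise sphere bounds \eqref{Eq:Eo_phiZ_osiur} read off from the upper region on its lower boundary $\{v=V_u\}$; and (ii) redistribute $v$-weights so as to match the multiplier $\kappa=|u|^{-R-\varepsilon_2}v^R$, ensuring that every bulk integral appears as a component of $\varE_l[\phi^{(\bZ)},0](U,V)$. The compatibility of the boundary data at $v=V_u$ with the interior target is a clean arithmetic check using $V_u=|u|^{1+(\varepsilon_2-\varepsilon_1)/R}$: both the upper-region bound $|u|^{\chi_K(\bZ)+(-1+\varepsilon_1)/2}V_u^{-1}$ and the lower-region target $|u|^{\chi_K(\bZ)+(-1+R+\varepsilon_2)/2}V_u^{-1-R/2}$ reduce to $|u|^{\chi_K(\bZ)+(-3+\varepsilon_1)/2-(\varepsilon_2-\varepsilon_1)/R}$, so the boundary data has exactly the weight needed.

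I would first establish the $L^2_\omega$ bound by applying the $v^R$-weighted variant of \eqref{Eq:key_Eob1_2} between $v=V$ and $v=V_u$, controlling the boundary term at $V_u$ by \eqref{Eq:Eo_phiZ_osiur} and the bulk term involving $v^{R+2}|D_L(r\phi^{(\bZ)})|^2$ via $\varE_l[\phi^{(\bZ)},0](U,V)$ from \eqref{Eq:Eoeilr-FphiJ}. A standard Sobolev argument on $\rS^2$, after replacing $\bZ$ by $(\Omega,\bZ)$, upgrades the $L^2$ bound to $L^p_\omega$ for $2\leq p<\infty$, and to $L^\infty$ when $\chi(\bZ)\leq 0$ via the embedding $H^2(\rS^2)\hookrightarrow L^\infty$.

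For the $L^4_\omega$ case with $\chi(\bZ)=2$, I would run the calculation \eqref{Eq:Eo_L4_phiZ2_iur} with the range $[V,V_u]$ in place of $[V,\infty)$, combining the $L^2_\omega$ bound just obtained with the $\sD$ and $\hD_L$ components of $\varE_l$. For the interpolation range $2<p\leq p_\delta$ with $\chi(\bZ)=2$ (and $p=\infty$ with $\chi(\bZ)=1$), I would reproduce the fundamental-theorem and H\"older argument of the upper region,
\begin{Eq*}
\kl(\int_{\rS^2}|r\phi^{(\bZ)}|^p\d\omega\kr)_{v=V}-\kl(\int_{\rS^2}|r\phi^{(\bZ)}|^p\d\omega\kr)_{v=V_u}\lesssim \int_V^{V_u}\int_{\rS^2}|D_L(r\phi^{(\bZ)})|\big|r\phi^{(\bZ)}\big|^{p-1}\d\omega\d v,
\end{Eq*}
with the $L^2$ and $L^4$ inserts already obtained and the boundary value at $V_u$ supplied by \eqref{Eq:Eo_phiZ_osiur}. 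Finally, \eqref{Eq:Eo_DphiZ_osilr} follows directly from the identities $-ruD_\Lb=D_K-vD_S$, $r\sD=D_\Omega$, and $rvD_L=D_K-uD_S$, which reduce each $\hD$-derivative of $\phi^{(\bZ)}$ to an $L^p_\omega$ bound on $\phi^{(\bZ')}$ with $\chi(\bZ')\leq\chi(\bZ)+1$, exactly as at the end of the proof of \La{La:Eo_phiZ_osiur}.

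The main obstacle is exponent bookkeeping in the presence of the free parameter $R$: one must verify that the constants arising from \eqref{Eq:key_Eob1_2}, \eqref{Eq:key_42}, and the Sobolev embedding remain uniform in $R$ (since $R$ will be taken large in \Sn{Sn:Taierp2} to absorb the charge error), and that all boundary exponents at $v=V_u$ align with the interior target weights. The matching, however, works essentially tautologically by the computation recorded in the first paragraph, so once uniformity in $R$ is established, the remainder of the argument proceeds by the same steps as the upper region with $\infty$ replaced by $V_u$ throughout.
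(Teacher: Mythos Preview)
Your proposal is correct and follows the paper's overall strategy, but two steps differ in execution. First, for the $L^4_\omega$ bound with $\chi(\bZ)=2$, the paper applies \eqref{Eq:key_42} not to $\phi^{(\bZ)}$ but to $r^{R/2}\phi^{(\bZ)}$; expanding $D_L(r^{R/2}\phi^{(\bZ)})$ then produces exactly the $v^R$-weighted bulk terms $Rv^{R-2}|r\phi^{(\bZ)}|^2$, $v^{R+2}|\sD\phi^{(\bZ)}|^2$, and $v^R|\hD_L\phi^{(\bZ)}|^2$ that sit inside $\varE_l$ and the $L^2$ estimate, with $R$-dependence manifestly harmless. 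Your direct version also works after pulling out $\sup_v(|u|^{R+\varepsilon_2}v^{-R})$ from each bulk term, but the weighted application is cleaner. Second, for $2<p\leq p_\delta$ the paper does \emph{not} repeat the fundamental-theorem argument from the upper region: since in the lower region both the $L^2_\omega$ and the $L^4_\omega$ bounds carry the same $v^{-1-R/2}$ decay (after using $v\leq V_u$ to convert the raw $L^4$ bound $|u|^{\chi_K(\bZ)-1+(R+\varepsilon_2)/2}v^{-R/2}$), a direct interpolation $\|\phi^{(\bZ)}\|_{L^p_\omega}\leq\|\phi^{(\bZ)}\|_{L^2_\omega}^{4/p-1}\|\phi^{(\bZ)}\|_{L^4_\omega}^{2-4/p}$ already gives the result, with the extra $|u|$-exponent $\frac{(R+2\varepsilon_2)(p-2)}{Rp}<\delta$. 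Your more elaborate route via $\int|D_L(r\phi^{(\bZ)})||r\phi^{(\bZ)}|^{p-1}$ would still close, but is unnecessary here precisely because the $L^4$ bound does not lose $v$-decay as it did in the upper region. The remaining steps (the $L^2_\omega$ estimate via the weighted integration-by-parts lemma, Sobolev on $\rS^2$, and the derivation of \eqref{Eq:Eo_DphiZ_osilr} from the identities for $\hD_\Lb,\sD,\hD_L$) match the paper.
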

\begin{corollary}\label{Cy:So_J_osilr}
Assume that \eqref{Eq:Eo_phiZ_osiur} and \eqref{Eq:Eoeilr-FphiJ} are satisfied.
For $\chi(\bZ)=1$, $p=2$ or $\chi(\bZ)=0$, $2\leq p<\infty$, we have
\begin{Eq}\label{Eq:Eo_J_osilr}
|u|^2\|J^{(\bZ)}_\Lb\|_{L_\omega^p}+|u|v\|\sJ^{(\bZ)}\|_{L_\omega^p}+v^2\|J^{(\bZ)}_L\|_{L_\omega^p}\lesssim |u|^{\chi_K(\bZ)+R+\varepsilon_2}v^{-2-R}.
\end{Eq}
\end{corollary}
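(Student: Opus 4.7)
The plan is to mimic the proof of \Cy{Cy:So_J_osiur}, substituting the lower-region sphere estimates from \La{La:Eo_phiZ_osilr} and \Cy{Cy:Eo_FrZ_osilr} for their upper-region counterparts. Consider first $\chi(\bZ)=1$, so $\bZ=Z$. Using the expansion \eqref{Eq:Efo_JZ} and splitting $F=\rF+F[\q]$, H\"older on $\rS_u^v$ yields
\[
\|J^{(Z)}_\mu\|_{L_\omega^2}\lesssim\|\phi^{(Z)}\|_{L_\omega^4}\|\hD_\mu\phi\|_{L_\omega^4}+\|\phi\|_{L_\omega^\infty}\|\hD_\mu\phi^{(Z)}\|_{L_\omega^2}+\|\phi\|_{L_\omega^\infty}^2\|\rF_{Z\mu}\|_{L_\omega^2}+\|\phi\|_{L_\omega^4}^2|F[\q]_{Z\mu}|,
\]
and each factor is controlled by the appropriate sphere estimate. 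For each $\mu\in\{\Lb,e_i,L\}$, the task is to verify that, after multiplying by the corresponding weight $|u|^2$, $|u|v$, or $v^2$, every contribution is dominated by $|u|^{\chi_K(Z)+R+\varepsilon_2}v^{-2-R}$.

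The first, second, and fourth terms work out exactly: the product of two matched scalar-field factors produces the correct $v^{-2-R}$ decay with the correct $|u|$-weight, and the charge contribution $F[\q]_{Z\mu}$ controlled by \eqref{Eq:Ro_F0Z} pairs with $\|\phi\|_{L_\omega^4}^2\lesssim|u|^{-1+R+\varepsilon_2}v^{-2-R}$ to reproduce the target (the $e_i$-component of $F[\q]$ vanishes outright). The main obstacle is the third term: \Cy{Cy:Eo_FrZ_osilr} supplies only a $v^{-R/2}$ factor rather than the ``matching'' $v^{-1-R/2}$, so the naive product carries an excess $v$-power of $-R/2$ and a defect $|u|$-power of $+(R+\varepsilon_2)/2$ compared with the target. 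I would absorb this discrepancy using the lower-region constraint $v\geq|u|$, which converts the surplus $v^{-R/2}$ into $|u|^{-R/2}$ and leaves a residual factor $|u|^{-1/2+\varepsilon_2/2}$ that is bounded by $1$ since $\varepsilon_2<1$ and $|u|\geq 1$.

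For $\chi(\bZ)=0$ with $2\leq p<\infty$, the expansion simplifies to $J_\mu=\Im(\phi\cdot\overline{\hD_\mu\phi})$, so H\"older gives $\|J_\mu\|_{L_\omega^p}\lesssim\|\phi\|_{L_\omega^{2p}}\|\hD_\mu\phi\|_{L_\omega^{2p}}$, and the first cases of \La{La:Eo_phiZ_osilr} immediately yield the required bound with the correct weighted sum. As in \Cy{Cy:So_J_osiur}, the routine arithmetic verifications can be left to the reader.
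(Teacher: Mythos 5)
Your proposal is correct and is essentially the paper's intended argument: the paper simply states that the proof is the same as that of \Cy{Cy:So_J_osiur} with the lower-region estimates \eqref{Eq:Eo_phiZ_osilr}, \eqref{Eq:Eo_DphiZ_osilr}, \eqref{Eq:Ro_F0Z} and \eqref{Eq:Eo_FrZ_osilr} in place of the upper-region ones, which is exactly what you do. You also correctly identify and resolve the only nontrivial bookkeeping point, the $\|\phi\|_{L_\omega^\infty}^2\|\rF_{Z\mu}\|_{L_\omega^2}$ term, whose surplus factor $|u|^{\frac{-1+R+\varepsilon_2}{2}}v^{-\frac{R}{2}}$ is indeed bounded by $|u|^{\frac{-1+\varepsilon_2}{2}}\leq 1$ using $v\geq|u|\geq1$ (your prose slightly misstates the exponent of the $|u|$-defect, but the residual $|u|^{-1/2+\varepsilon_2/2}$ you conclude with is the right one).
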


\begin{proof}[Proof of \La{La:Eo_phiZ_osilr}]
At first, using \eqref{Eq:key_Eob1_1}, \eqref{Eq:Eo_phiZ_osiur} and \eqref{Eq:Eoeilr-FphiJ}, 
for $\chi(\bZ)\leq 2$, we have
\begin{Eq*}
&V^{R-1}\kl.\int_{\rS^2}|r\phi^{(\bZ)}|^2\d\omega\kr|_{v=V}+ R\int_V^{V_u}\int_{\rS^2}v^{-2+R}|r\phi^{(\bZ)}|^2\d\omega\d v\\
\lesssim & V_u^{R-1}\kl.\int_{\rS^2}|r\phi^{(\bZ)}|^2\d\omega\kr|_{v=V_u}+R^{-1}\int_V^{V_u}\int_{\rS^2}V^{-2}v^{2+R}|D_L(r\phi^{(\bZ)})|^2\d\omega\d v\\
\lesssim&  |u|^{2\chi_K(\bZ)-1+R+\varepsilon_2}V^{-1}.
\end{Eq*}
This gives the first part of \eqref{Eq:Eo_phiZ_osilr}. 

Next, using the \eqref{Eq:key_42}, the inequality just obtained and \eqref{Eq:Eoeilr-FphiJ},
for $\chi(\bZ)=2$ and some constant $C$, we find
\begin{Eq*}
&\kl(\int_{\rS^2}|r^{R/2}\phi^{(\bZ)}|^4\d \omega\kr)^{\frac{1}{2}}_{v=V}-C\kl(\int_{\rS^2}|r^{R/2}\phi^{(\bZ)}|^4\d \omega\kr)^{\frac{1}{2}}_{v=V_u}\\
\lesssim&\int_V^{V_u}\int_{\rS^2}|D_L^{\leq 1}(r^{R/2}\phi^{(\bZ)})|^2+|D_\Omega (r^{R/2}\phi^{(\bZ)})|^2\d\omega\d v\\
\lesssim&\int_V^{V_u}\int_{\rS^2}Rv^{-2+R}|r\phi^{(\bZ)}|^2+v^{2+R}|\sD \phi^{(\bZ)}|^2+v^R|\hD_L\phi^{(\bZ)}|^2\d\omega\d v\\
\lesssim& |u|^{2\chi_K(\bZ)-2+R+\varepsilon_2}.
\end{Eq*}
Now, combining this with \eqref{Eq:Eo_L4_phiZ2_iur}, we calculate that
\begin{Eq*}
\|\phi^{(\bZ)}\|_{L_\omega^4}
\lesssim |u|^{\chi_K(\bZ)-1+\frac{R+\varepsilon_2}{2}}v^{-\frac{R}{2}}\lesssim |u|^{\chi_K(\bZ)+\frac{R+\varepsilon_2}{2}+\frac{\varepsilon_2}{R}}v^{-1-\frac{R}{2}}.
\end{Eq*}
Using the interpolation, for $2<p<4$ and $\chi(\bZ)=2$, we find
\begin{Eq*}
\|\phi^{(\bZ)}\|_{L_\omega^{p}}\lesssim \|\phi^{(\bZ)}\|_{L_\omega^2}^{\frac{4}{p}-1}\|\phi^{(\bZ)}\|_{L_\omega^4}^{2-\frac{4}{p}}\lesssim  |u|^{\chi_K(\bZ)+\frac{-1+R+\varepsilon_2}{2}+\frac{(R+2\varepsilon_2)(p-2)}{Rp}}v^{-1-\frac{R}{2}}.
\end{Eq*}
Noticing that $\frac{(R+2\varepsilon_2)(p-2)}{Rp}\leq\frac{(R+2\varepsilon_2)(p_\delta-2)}{Rp_\delta}= \frac{(R+2\varepsilon_2)\delta}{2R}<\delta$, 
we finish the proof of \eqref{Eq:Eo_phiZ_osilr}. 

Finally, the proof of \eqref{Eq:Eo_DphiZ_osilr} is most the same with that of \eqref{Eq:Eo_DphiZ_osiur}. So we omit it here.
\end{proof}

\begin{proof}[Proof of \Cy{Cy:So_J_osilr}]
Again, the proof of \Cy{Cy:So_J_osilr} is most the same with that of \Cy{Cy:So_J_osiur}. We left it to the interested reader.
\end{proof}

\subsection{Energy estimates of Maxwell field}
Here, we use the \eqref{Eq:Ee} with $\rD=\rDb$, $[f,G]=[0,\rF^{(\bZ)}]$ and $\kappa=|u|^{-R-\varepsilon_2}v^R$.
By the discussions in \Sn{Tmvfaee} and \Sn{Sn:Tbailr}, 
noticing that $|u|^{-R-\varepsilon_2}v^R=|u|^{-\varepsilon_1}$ on $\{v=v_u\}$ 
and using \eqref{Eq:Eo_FphiZ_o_uC}, we know
\begin{Eq*}
&\varE_l[0,\rF^{(\bZ)}](U,V)-M_2|U|^{2\chi_K(\bZ)}
\lesssim\int_\rDb |u|^{-R-\varepsilon_2}v^{R}|(\rF^{(\bZ)})_K{}^{\mu}\nabla^{\nu}(\rF^{(\bZ)})_{\nu\mu}|\\
\lesssim&\int_\rDb |u|^{2-R-\varepsilon_2}v^{R}\kl(|\rrho^{(\bZ)}||J^{(\bZ)}_\Lb|+|\alphab^{(\bZ)}||\sJ^{(\bZ)}|\kr)+|u|^{-R-\varepsilon_2}v^{2+R}\kl(|\rrho^{(\bZ)}||J^{(\bZ)}_L|+|\alpha^{(\bZ)}||\sJ^{(\bZ)}|\kr).
\end{Eq*}
Then, similar to the upper region, using \eqref{Eq:Eoeilr-FphiJ} and noticing $V\geq|U|\geq|U_*|$, we find
\begin{Eq*}
&\varE_l[0,\rF^{(\bZ)}](U,V)-M_2|U|^{2\chi_K(\bZ)}\\
\lesssim &\int_V^{V_U}|U|^{\frac{R+\varepsilon_2}{2}}v^{-2-\frac{R}{2}}\kl(\int_{\rHb_{U,U_v}^v}|u|^{-R-\varepsilon_2}v^{2+R}|\rrho^{(\bZ)}|^2\kr)^{\frac{1}{2}}
\kl(\int_{\rHb_{U,U_v}^{v}}|u|^{4-2R-2\varepsilon_2}v^{2+2R}|J^{(\bZ)}_\Lb|^2\kr)^{\frac{1}{2}}\d v\\
&\quad+\int_V^{V_U}|U|^{\frac{R+\varepsilon_2}{2}}v^{-2-\frac{R}{2}}\kl(\int_{\rHb_{U,U_v}^v}|u|^{2-R-\varepsilon_2}v^R|\alphab^{(\bZ)}|^2\kr)^{\frac{1}{2}}
\kl(\int_{\rHb_{U,U_v}^v}|u|^{2-2R-2\varepsilon_2}v^{4+2R}|\sJ^{(\bZ)}|^2\kr)^{\frac{1}{2}}\d v\\
&\quad +\int_U^{U_V}|u|^{-2+\frac{R+\varepsilon_2}{2}}V^{-\frac{R}{2}}\kl(\int_{\rH_u^{V,V_u}}|u|^{2-R-\varepsilon_2}v^R|\rrho^{(\bZ)}|^2\kr)^{\frac{1}{2}}
\kl(\int_{\rH_u^{V,V_u}}|u|^{2-2R-2\varepsilon_2}v^{4+2R}|J^{(\bZ)}_L|^2\kr)^{\frac{1}{2}}\d u\\
&\quad+\int_U^{U_V}|u|^{-2+\frac{R+\varepsilon_2}{2}}V^{-\frac{R}{2}}\kl(\int_{\rH_u^{V,V_u}}|u|^{-R-\varepsilon_2}v^{2+R}|\alpha^{(\bZ)}|^2\kr)^{\frac{1}{2}}
\kl(\int_{\rH_u^{V,V_u}}|u|^{4-2R-2\varepsilon_2}v^{2+2R}|\sJ^{(\bZ)}|^2\kr)^{\frac{1}{2}}\d u\\
\lesssim&\int_V^{\infty}|U|^{2\chi_K(\bZ)+\frac{1+R+\varepsilon_2}{2}+2\delta}v^{-2-\frac{R}{2}}\d v
+\int_U^{U_*}|u|^{2\chi_K(\bZ)+\frac{-3+R+\varepsilon_2}{2}+2\delta}V^{-\frac{R}{2}}\d u\\
\lesssim& |U_*|^{\frac{-1+\varepsilon_2}{2}+2\delta}|U|^{2\chi_K(\bZ)}.
\end{Eq*}
This means that for some constant $C_2$ that does not depend on $U_*$ and $R$, we have
\begin{Eq*}
\varE_l[0,\rF^{(\bZ)}](U,V)\leq \kl(1+C_2|U_*|^{\frac{-1+\varepsilon_2}{2}+2\delta}\kr)M_2|U|^{2\chi_K(\bZ)}.
\end{Eq*}

\subsection{Energy estimates of scalar field}
Again, we only consider the case that $\bZ=(Z_1,Z_2)$.
We use the \eqref{Eq:Ee} with $\rD=\rDb$, $[f,G]=[\phi^{(\bZ)},0]$ and $\kappa=|u|^{-R-\varepsilon_2}v^R$.
Through the discussions in \Sn{Tmvfaee} and \Sn{Sn:Tbailr}, we know
\begin{Eq*}
&\varE_l[\phi^{(\bZ)},0](U,V)-M_2|U|^{2\chi_K(\bZ)}+\frac{R}{2}\int_{\rDb}|u|^{-1-R-\varepsilon_2}v^{-1+R}\kl(|u|^3|\hD_\Lb\phi^{(\bZ)}|^2+v^3|\hD_L\phi^{(\bZ)}|^2\kr)\\
\leq&\int_\rDb |u|^{-R-\varepsilon_2}v^R\kl(|\Re(\overline{\square_A\phi^{(\bZ)}}\hD_K\phi^{(\bZ)})|+|\Im(\phi^{(\bZ)}\overline{\hD^\mu \phi^{(\bZ)}})F_{K\mu}|\kr)\\
\lesssim&\int_\rDb|u|^{2-R-\varepsilon_2}v^R\kl(\kl|\kl(\square_A\phi^{(\bZ)},\phi^{(\bZ)}\rho\kr)\kr||\hD_\Lb\phi^{(\bZ)}|+|\phi^{(\bZ)}\alphab||\sD \phi^{(\bZ)}|\kr)\\
&\phantom{\int_\rDb}\quad +|u|^{-R-\varepsilon_2} v^{2+R}\kl(\kl|\kl(\square_A\phi^{(\bZ)},\phi^{(\bZ)}\rho\kr)\kr||\hD_L \phi^{(\bZ)}|+|\phi^{(\bZ)}\alpha||\sD \phi^{(\bZ)}|\kr)
\end{Eq*}
Using the notations $\{I_j\}_{j=1}^5$ defined in \eqref{Eq:Efo_boxphiZ2},
similar to the process in upper region,
by \eqref{Eq:Eo_Q}, \eqref{Eq:Ro_F0Z} and results in \Sn{Sn:EotMfosilr} and \Sn{Sn:Eotsfosilr}, 
we know
\begin{Eq*}
P_0:=\kl\|\kl(\square_A\phi^{(\bZ)},\phi^{(\bZ)}\rho\kr)\kr\|_{L_\omega^2}\lesssim  |u|^{\chi_K(\bZ)+\frac{-1+R+\varepsilon_2}{2}}v^{-3-\frac{R}{2}}.
\end{Eq*}

Now, by this estimate, \eqref{Eq:Eo_FZ_osilr}, \eqref{Eq:Eo_phiZ_osilr} and \eqref{Eq:Eoeilr-FphiJ}, noticing $V\geq|U|\geq |U_*|$, we similarly get that
\begin{Eq*}
&\varE_l[\phi^{(\bZ)},0](U,V)-M_2|U|^{2\chi_K(\bZ)}+\frac{R}{2}\int_{\rDb}|u|^{-1-R-\varepsilon_2}v^{-1+R}\kl(|u|^3|\hD_\Lb\phi^{(\bZ)}|^2+v^3|\hD_L\phi^{(\bZ)}|^2\kr)\\
\lesssim&  \kl(\int_V^{V_U}\int_{U}^{U_v} |u|^{2-R-\varepsilon_2}v^{3+R}P_0^2\d u\d v\kr)^\frac{1}{2}\kl(\int_{\rDb}|u|^{2-R-\varepsilon_2}v^{-1+R}|\hD_\Lb\phi^{(\bZ)}|^2\kr)^{\frac{1}{2}}\\
&\quad+\int_U^{U_V} \kl(\int_V^{V_u}|u|^{2-R-\varepsilon_2}v^{2+R}\|\phi^{(\bZ)}\|_{L_\omega^{p_\delta}}^2\|\alphab\|_{L_\omega^{q_\delta}}^2\d v\kr)^{\frac{1}{2}}\kl(\int_{\rH_{u}^{V,V_u}}|u|^{2-R-\varepsilon_2}v^R|\slashed D\phi^{(\bZ)}|^2\kr)^{\frac{1}{2}}\d u\\
&\quad+\kl(\int_U^{U_V}\int_V^{V_u} |u|^{1-R-\varepsilon_2}v^{4+R}P_0^2\d v\d u\kr)^\frac{1}{2}\kl(\int_{\rDb}|u|^{-1-R-\varepsilon_2}v^{2+R}|\hD_L\phi^{(\bZ)}|^2\kr)^{\frac{1}{2}}\\
&\quad+\int_V^{V_U} \kl(\int_U^{U_v}|u|^{-R-\varepsilon_2}v^{4+R}\|\phi^{(\bZ)}\|_{L_\omega^{p_\delta}}^2\|\alpha\|_{L_\omega^{q_\delta}}^2\d u\kr)^{\frac{1}{2}}\kl(\int_{\rHb_{U,U_v}^v}|u|^{-R-\varepsilon_2}v^{2+R}|\slashed D\phi^{(\bZ)}|^2\kr)^{\frac{1}{2}}\d v\\
\lesssim& |U|^{\chi_K(\bZ)}\kl(\int_{\rDb}|u|^{-1-R-\varepsilon_2}v^{-1+R}\kl(|u|^3|\hD_\Lb\phi^{(\bZ)}|^2+v^3|\hD_L\phi^{(\bZ)}|^2\kr)\kr)^{\frac{1}{2}}+|U_*|^{\frac{-1+\varepsilon_2}{2}+\delta}|U|^{2\chi_K(\bZ)}.
\end{Eq*}
By applying $ab\leq \varepsilon a^2+4\varepsilon^{-1}b^2$ to the first term in right hand side and choosing $R$ large enough, we finally reach that
\begin{Eq*}
\varE_l[\phi^{(\bZ)},0](U,V)\leq \kl(1+1+C_2|U_*|^{\frac{-1+\varepsilon_2}{2}+\delta}\kr)M_2|U|^{2\chi_K(\bZ)},
\end{Eq*}
with some constant $C_2$ independent with $U_*$ and $R$.

\subsection{Estimate of the current terms}
Again, we only give the discussion of the case that $\chi(\bZ)=2$. 
Here we use the notations $\{I_{\mu;j}(Z_1,Z_2)\}_{j=1}^8$ defined in \eqref{Eq:Efo_JZ2}. 
Then, using \eqref{Eq:Eo_phiZ_osilr}, \eqref{Eq:Eo_DphiZ_osilr}, \eqref{Eq:Ro_F0Z} and \eqref{Eq:Eo_FrZ_osilr}, we are able to find that
\begin{Eq*}
\kl(\sum_{j=1}^3+\sum_{j=5}^8\kr)\|I_{j;X}\|_{L_\omega^2}\lesssim&
\begin{cases}
|u|^{\chi_K(\bZ)-2+R+\varepsilon_2+\delta}v^{-2-R},&X=\Lb,\\
|u|^{\chi_K(\bZ)-1+R+\varepsilon_2+\delta}v^{-3-R},&X=e_i,~(i=1,2),\\
|u|^{\chi_K(\bZ)+R+\varepsilon_2+\delta}v^{-4-R},&X=L,
\end{cases}\\
\|I_{4;X}\|_{L_\omega^2}\lesssim& |u|^{\frac{-1+\varepsilon_2+R}{2}}v^{-1-\frac{R}{2}}\| \hD_X \phi^{(\bZ)}\|_{L_\omega^2}.
\end{Eq*}
Combining these and  using \eqref{Eq:Eoeilr-FphiJ}, we similarly get that
\begin{Eq*}
\varE_l[J^{(\bZ)}](U,V)
=&\int_{\rHb_{U,U_V}^V}|u|^{2-2R-2\varepsilon_2}v^{2+2R} \kl(|u|^2|J^{(\bZ)}_{\Lb}|^2+v^2|\sJ^{(\bZ)}|^2\kr)\\
&\quad+\int_{\rH_U^{V,V_U}}|u|^{2-2R-2\varepsilon_2}v^{2+2R}\kl(|u|^2|\sJ^{(\bZ)}|^2+v^2|J^{(\bZ)}_L|^2\kr)\\
\lesssim&\int_U^{U_*} |u|^{2\chi_K(\bZ)+2\delta}\d u+|U|\int_{\rHb_{U,U_V}^{V}}|u|^{-R-\varepsilon_2}v^R\kl(|u|^2|\hD_\Lb \phi^{(\bZ)}|^2+v^2|\sD \phi^{(\bZ)}|^2\kr)\\
&\quad+\int_V^\infty |U|^{2\chi_K(\bZ)+2+2\delta}v^{-2}\d v+|U|\int_{\rH_{U}^{V,V_U}}|u|^{-R-\varepsilon_2}v^R\kl(|u|^2|\sD\phi^{(\bZ)}|^2+v^{2}|\hD_\Lb\phi^{(\bZ)}|^2\kr)\\
\lesssim & |U_*|^{-2\delta}|U|^{2\chi_K(\bZ)+1+4\delta}.
\end{Eq*}

\subsection{Close of \Bz{Bz:l}}
Through the discussions in last few subsections, we find that
\begin{Eq*}
\varE_l[\phi^{(\bZ)},\rF^{(\bZ)}](U,V)
\leq & \kl(3+C_2|U_*|^{\frac{-1+\varepsilon_2}{2}+2\delta}\kr)M_2|U|^{2\chi_K(\bZ)},\\
\varE_l[J^{(\bZ)}](U,V)\leq&(C_2|U_*|^{-2\delta})M_2|U|^{2\chi_K(\bZ)+1+4\delta},
\end{Eq*}
with the constant $C_2$ does not depend on $U_*$ and $R$. 
Thus, defining $U_2:=\min\kl\{U_1,-1-{C_2}^{\frac{1}{2\delta}}\kr\}$, we find
\begin{Eq*}
\varE_l[\phi^{(\bZ)},\rF^{(\bZ)}](U,V)\leq & 4M_2|U|^{2\chi_K(\bZ)},\\
\varE_l[J^{(\bZ)}](U,V)\leq&M_2|U|^{2\chi_K(\bZ)+1+4\delta},
\end{Eq*}
for all $U\leq U_*\leq U_2$, $V\geq V_*$.
This closes the \Bz{Bz:l}.

\subsection{Proof of \Tm{Tm:Main2}}
Now, for the $R$ and $U_2$ fixed above, 
for any $U\leq U_*\leq U_2$, we use the \eqref{Eq:Ee} with $\rD=\rD_{l;U,U_*}^{V_*,V_U}$.
Using \eqref{Eq:Eoeilr-FphiJ}
and a simple modification of the processes in last few subsections, we easily find that
\begin{Eq*}
&\int_{\varC_U}|r|^{2-\varepsilon_2} |(\alphab^{(\bZ)},\rrho^{(\bZ)},\sigma^{(\bZ)},\alpha^{(\bZ)},\hD_\Lb \phi^{(\bZ)}, \sD\phi^{(\bZ)},\hD_{L} \phi^{(\bZ)})|^2\lesssim |U|^{2\chi_K(\bZ)},
\end{Eq*}
for any $\chi(\bZ)\leq 2$ and $\varC_U:=\{t=0,~2V_*\leq r\leq -2U\}$, since $v=|u|=2^{-1}r$ on $\varC_U$.
Now, using relations between \emph{Lie} derivatives and usual derivatives,
and that all $\partial_\mu$ can be expressed as the combination of $Z\in\{T,S,\Omega\}\subset\bZ$, 
we easily find
\begin{Eq*}
\sum_{|\beta|\leq 2}\int_{\varC_U}r^{2-\varepsilon_2+2|\beta|}\kl|\kl(DD^\beta\phi,\partial^\beta\rF\kr)\kr|^2\d x\lesssim 1,
\end{Eq*}
with the constant independent of $U$. 
Now, passing $U\rightarrow\infty$, we get the outer part of \eqref{Eq:Main2}.
On the other hand, using the result shows in the interior region with $U_*'=-V_*$,
and the fact that $\kl<r\kr>\approx 1$ while $r\leq 2V_*$, we finish the proof of \eqref{Eq:Main2}.

Finally, we take the integral of $\partial^\mu J_\mu$ on $\rD_{-V,V}^{0,V}$.
Then we find
\begin{Eq*}
\kl.\int_{\{|x|<2V\}}J_0\d x\kr|_{t=0}=\int_{\rHb_{-V,V}^V}J_\Lb=\kl.\int_{-V}^V \int_{\rS^2}\Im\kl((r\phi)\cdot \overline{ D_\Lb(r\phi)}\kr)\d\omega\d u\kr|_{v=V}.
\end{Eq*}
Passing $V\rightarrow\infty$, this gives
\begin{Eq*}
\kl.\int_{\rR^3}J_0\d x\kr|_{t=0}=\int_{-\infty}^\infty \int_{\rS^2}\Im\kl(\Phi\cdot \overline{ D_\Lb\Phi}\kr)\d\omega\d u,
\end{Eq*}
and proves that \eqref{Eq:crosd} implies  \eqref{Eq:Do_q0}. Now we finish the proof.

\section{Appendix}\label{Sn:A}

\subsection{Technical tools}

\begin{lemma}\label{La:key_Eob1}
Assume that $f,g\in C^1([s_1,s_2])$ with $g$ nonnegative and increasing, 
we have
\begin{align}
|f(s_2)|^2g^{-1}(s_2)+g^{-2}(s_2)\int_{s_1}^{s_2}|f|^2g'\d s\lesssim &|f(s_1)|^2g^{-1}(s_2)+\int_{s_1}^{s_2}|f'|^2(g')^{-1}\d s,\label{Eq:key_Eob1_2}\\
|f(s_2)|^2g^{-1}(s_2)+\int_{s_1}^{s_2}|f|^2g^{-2}g'\d s\lesssim &|f(s_1)|^2g^{-1}(s_1)+\int_{s_1}^{s_2}|f'|^2(g')^{-1}\d s.\label{Eq:key_Eob1_1}
\end{align}
\end{lemma}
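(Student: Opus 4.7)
My plan is to handle the two inequalities separately, using different starting points suited to the particular weights that appear.

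For (\ref{Eq:key_Eob1_1}), I would start from the identity
\[
\frac{\d}{\d s}\bigl(|f|^2 g^{-1}\bigr) = 2\Re(f'\bar f)\,g^{-1} - |f|^2 g^{-2} g',
\]
and estimate the cross term by a weighted Cauchy--Schwarz,
\[
2|f'||f|\,g^{-1} \le 2|f'|^2 (g')^{-1} + \tfrac{1}{2}|f|^2 g^{-2} g'.
\]
Rearranging and integrating from $s_1$ to $s_2$ moves half of $\int |f|^2 g^{-2} g'$ to the left-hand side and yields (\ref{Eq:key_Eob1_1}) directly.

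For (\ref{Eq:key_Eob1_2}) the weight $g^{-1}(s_2)$, rather than the more natural $g^{-1}(s_1)$, on the right-hand side rules out the differential approach just used, so I would argue pointwise instead. Writing $f(s)=f(s_1)+\int_{s_1}^s f'(t)\,\d t$ and applying Cauchy--Schwarz with the weight $(g')^{-1}$ gives
\[
|f(s)|^2 \le 2|f(s_1)|^2 + 2\bigl(g(s)-g(s_1)\bigr)\int_{s_1}^s |f'|^2 (g')^{-1}\,\d t \le 2|f(s_1)|^2 + 2g(s)\int_{s_1}^{s_2}|f'|^2(g')^{-1}\,\d t,
\]
where the second inequality uses $g(s_1)\ge 0$. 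Evaluating at $s=s_2$ and dividing by $g(s_2)$ controls $|f(s_2)|^2 g^{-1}(s_2)$; multiplying instead by $g'(s) g^{-2}(s_2)$ and integrating, using $\int_{s_1}^{s_2} g g'\,\d s = \tfrac12(g(s_2)^2-g(s_1)^2)\le \tfrac12 g(s_2)^2$, controls $g^{-2}(s_2)\int|f|^2 g'\,\d s$. Adding the two bounds gives (\ref{Eq:key_Eob1_2}).

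The computations are elementary; the only subtle point is that in (\ref{Eq:key_Eob1_2}) one cannot afford to lose the sharper weight $g^{-1}(s_2)$ on the $|f(s_1)|^2$ term, and this is precisely what forces the replacement of a differential approach by a pointwise one and the explicit use of the nonnegativity $g(s_1)\ge 0$ to drop $g(s_1)$ (and $g(s_1)^2$) from the bounds.
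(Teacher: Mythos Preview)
Your proof is correct. For \eqref{Eq:key_Eob1_1} your argument coincides with the paper's: both integrate the identity for $\frac{\d}{\d s}(|f|^2 g^{-1})$ and absorb half of $\int |f|^2 g^{-2}g'$ after a weighted Cauchy--Schwarz.

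For \eqref{Eq:key_Eob1_2} the routes genuinely differ. The paper does \emph{not} abandon the differential approach; instead it integrates by parts with the weight $g$ (rather than $g^{-1}$), bounding $|f(s_2)|^2 g(s_2)+\int |f|^2 g'$ by $2|f(s_2)|^2 g(s_2)+2\int |f||f'|g$, then uses $|f(s_2)|^2\le |f(s_1)|^2+2\int|f||f'|$ together with $g\le g(s_2)$, applies Cauchy--Schwarz, absorbs, and finally divides through by $g(s_2)^2$. Your approach replaces all this by a direct pointwise Hardy-type bound via $f(s)=f(s_1)+\int_{s_1}^s f'$ and Cauchy--Schwarz with weight $(g')^{-1}$, which immediately yields $|f(s)|^2\le 2|f(s_1)|^2+2g(s)\int|f'|^2(g')^{-1}$. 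Your route is arguably more transparent and gives slightly better constants, while the paper's keeps the computation within the ``multiply and integrate by parts'' energy-estimate idiom used throughout the rest of the article. Your remark that the differential approach is ruled out is thus a bit too strong: it is only the differential approach with weight $g^{-1}$ that fails, and the paper circumvents this by switching to weight $g$.
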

\begin{corollary}\label{Cy:key_Eob2}
Assume $f,g$ as that in \La{La:key_Eob1} with additionally $f(s_1)=0$, we have
\begin{Eq}\label{Eq:key_Eob2}
|f(s_2)|^2g^{-1}(s_2)+\int_{s_1}^{s_2}|f|^2g^{-2}g'\d s\lesssim \int_{s_1}^{s_2}|f'|^2(g')^{-1}\d s,
\end{Eq}
even if $g(s_1)=0$.
\end{corollary}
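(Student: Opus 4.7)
The plan is to derive the corollary as a direct application of inequality \eqref{Eq:key_Eob1_1} from \La{La:key_Eob1}, after handling the singularity that arises when $g(s_1) = 0$. If $g(s_1) > 0$, then the boundary term $|f(s_1)|^2 g^{-1}(s_1)$ on the right-hand side of \eqref{Eq:key_Eob1_1} vanishes under the hypothesis $f(s_1) = 0$, and \eqref{Eq:key_Eob2} follows immediately. The substantive work lies in the degenerate case $g(s_1) = 0$, where $g^{-1}(s_1) = \infty$ and the product $|f(s_1)|^2 g^{-1}(s_1)$ is the indeterminate form $0 \cdot \infty$, so \eqref{Eq:key_Eob1_1} cannot be used as-is.

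My proposed fix is to apply \eqref{Eq:key_Eob1_1} on the shifted interval $[s_1+\epsilon, s_2]$ for small $\epsilon > 0$ and then pass to the limit $\epsilon \to 0^+$. For $\epsilon$ with $g(s_1+\epsilon) > 0$ (which, by monotonicity of $g$, fails only on an initial segment on which both sides are easily seen to remain controlled), the lemma yields
\begin{equation*}
|f(s_2)|^2 g^{-1}(s_2) + \int_{s_1+\epsilon}^{s_2} |f|^2 g^{-2} g' \, \d s \lesssim |f(s_1+\epsilon)|^2 g^{-1}(s_1+\epsilon) + \int_{s_1+\epsilon}^{s_2} |f'|^2 (g')^{-1} \, \d s.
\end{equation*}
To control the boundary term I would apply Cauchy--Schwarz with weight $g'$: since $f(s_1) = 0$,
\begin{equation*}
|f(s_1+\epsilon)|^2 = \left|\int_{s_1}^{s_1+\epsilon} f'(s) \, \d s\right|^2 \leq \left(\int_{s_1}^{s_1+\epsilon} g'(s)\, \d s\right)\left(\int_{s_1}^{s_1+\epsilon} |f'|^2 (g')^{-1} \, \d s\right),
\end{equation*}
and since $\int_{s_1}^{s_1+\epsilon} g' \, \d s = g(s_1+\epsilon) - g(s_1) = g(s_1+\epsilon)$, dividing gives $|f(s_1+\epsilon)|^2 g^{-1}(s_1+\epsilon) \leq \int_{s_1}^{s_1+\epsilon} |f'|^2 (g')^{-1} \, \d s$. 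Substituting this back, the entire right-hand side is bounded by a constant multiple of $\int_{s_1}^{s_2} |f'|^2 (g')^{-1} \, \d s$, and the monotone convergence theorem extends the left-hand side integral from $[s_1+\epsilon, s_2]$ to $[s_1, s_2]$ as $\epsilon \to 0^+$.

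The main obstacle is the boundary term $|f(s_1+\epsilon)|^2 g^{-1}(s_1+\epsilon)$, which need not tend to zero merely from $f(s_1)=0$ without quantitative control; the weighted Cauchy--Schwarz above supplies exactly this control, since the same weight $(g')^{-1}$ that appears on the right-hand side of the target inequality already encodes the correct regularity of $f$ near $s_1$. A minor technical caveat, which I would acknowledge briefly, is that when $g'$ vanishes on a set of positive measure the inequality is to be understood trivially (its right-hand side is $+\infty$ unless $f' \equiv 0$ there); if a cleaner argument is desired one may first replace $g(s)$ by $g(s) + \delta(s - s_1)$ for small $\delta > 0$, run the above estimate, and let $\delta \to 0^+$ at the end, using monotone/dominated convergence on both sides.
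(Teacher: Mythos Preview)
Your proof is correct and follows essentially the same approach as the paper: apply \eqref{Eq:key_Eob1_1} on a shifted interval, control the boundary term at the shifted left endpoint, and pass to the limit. The only minor difference is that the paper controls $|f(s_1')|^2 g^{-1}(s_1')$ by invoking \eqref{Eq:key_Eob1_2} (whose right-hand side carries $g^{-1}(s_2)$ rather than $g^{-1}(s_1)$ and so remains finite when $g(s_1)=0$), whereas you obtain the same bound by a direct weighted Cauchy--Schwarz argument.
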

\begin{corollary}\label{Cy:key_Eob3}
Assume $f$ as that in \La{La:key_Eob1} and $m>0$. For any $0<s_1,s_2$  we have
\begin{align}
\sup_{s\in(0,s_2]}s^{-m}|f(s)|^2\lesssim m^{-1}\int_{0}^{s_2} s^{-m-1}|(s\partial_s)^{\leq 1} f|^2\d s,\label{Eq:key_Eob3_2}\\
\sup_{s\in[s_1,\infty)}s^m|f(s)|^2 \lesssim m^{-1}\int_{s_1}^\infty s^{m-1}|(s\partial_s)^{\leq 1} f|^2\d s.\label{Eq:key_Eob3_1}\\
\end{align}
\end{corollary}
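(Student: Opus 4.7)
The plan is to deduce both inequalities from a single application of Lemma \ref{La:key_Eob1} with $g(s)=s^m$, combined with an absorption argument that eliminates the boundary contribution. I will first establish \eqref{Eq:key_Eob3_2}, then reduce \eqref{Eq:key_Eob3_1} to it by the inversion $t=1/s$, $\tilde f(t):=f(1/t)$. A direct computation gives $t\tilde f'(t)=-sf'(s)|_{s=1/t}$, so the modulus of $(s\partial_s)^{\le 1}f$ is preserved, while the change of variables maps $s^{m-1}\,\d s$ to $-t^{-m-1}\,\d t$ and the interval $[s_1,\infty)$ to $(0,1/s_1]$; hence \eqref{Eq:key_Eob3_1} is exactly \eqref{Eq:key_Eob3_2} applied to $\tilde f$ with $s_2=1/s_1$.

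For \eqref{Eq:key_Eob3_2} I may assume the right-hand side is finite; otherwise the bound is vacuous. This forces $\int_0^{s_2}s^{-m-1}|f|^2\,\d s<\infty$. Since $g(s)=s^m$ is nonnegative and increasing with $g^{-1}=s^{-m}$, $g^{-2}g'=m\,s^{-m-1}$, $(g')^{-1}=m^{-1}s^{1-m}$, feeding it into \eqref{Eq:key_Eob1_1} gives for every $0<s_1<s^*\le s_2$
\begin{equation*}
s^{*-m}|f(s^*)|^2+m\int_{s_1}^{s^*}s^{-m-1}|f|^2\,\d s\lesssim s_1^{-m}|f(s_1)|^2+m^{-1}\int_{s_1}^{s^*}s^{1-m}|f'|^2\,\d s.
\end{equation*}
To absorb $s_1^{-m}|f(s_1)|^2$ I observe that $s^{-m}|f(s)|^2=s\cdot(s^{-m-1}|f(s)|^2)$: if this quantity were bounded below by some $c>0$ on an interval $(0,\varepsilon)$, then $\int_0^\varepsilon s^{-m-1}|f|^2\,\d s\ge c\int_0^\varepsilon s^{-1}\,\d s=\infty$, contradicting the assumed finiteness. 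Hence $\liminf_{s\to 0^+}s^{-m}|f(s)|^2=0$, so I can pick a sequence $s_1^{(k)}\to 0^+$ along which the boundary term vanishes. Passing to the limit yields $s^{*-m}|f(s^*)|^2\lesssim m^{-1}\int_0^{s_2}s^{1-m}|f'|^2\,\d s$, which is controlled by the right-hand side of \eqref{Eq:key_Eob3_2} since $s^{1-m}|f'|^2=s^{-m-1}|sf'|^2$. Taking the supremum over $s^*\in(0,s_2]$ concludes the proof of \eqref{Eq:key_Eob3_2}.

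The only subtle point is the $\liminf$ extraction: because $f$ is only assumed $C^1$ on compact subintervals of $(0,s_2]$, no pointwise bound at the singular endpoint is a priori available, and the integrability of the weighted $L^2$ norm is exactly what is needed to produce the approximating sequence. Everything else reduces to plugging the choice $g=s^m$ into Lemma \ref{La:key_Eob1} and tracking the constants.
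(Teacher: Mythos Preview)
Your proof is correct, but takes a different route from the paper's. The paper introduces a smooth cutoff $f_n:=\xi(ns)f$ with $\xi|_{s\le 1}=0$, $\xi|_{s\ge 2}=1$, applies Corollary~\ref{Cy:key_Eob2} to $f_n$ with $g(s)=s^m$ (so the boundary term at $0$ is trivially absent), and passes $n\to\infty$; the commutator contribution from $\xi'$ is supported on $[1/n,2/n]$ where $n^2 s^{1-m}\approx s^{-m-1}$, so it is absorbed by the $|f|^2$ part of the right-hand side and vanishes in the limit. For \eqref{Eq:key_Eob3_1} the paper says ``similarly,'' i.e.\ a cutoff at infinity.

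Your argument bypasses the cutoff machinery: you apply \eqref{Eq:key_Eob1_1} directly and kill the boundary term $s_1^{-m}|f(s_1)|^2$ by extracting a sequence along which it vanishes, using only the integrability of $s^{-m-1}|f|^2$ (which is built into the right-hand side). This is slightly more elementary and makes transparent exactly which part of the hypothesis is doing the work. Your reduction of \eqref{Eq:key_Eob3_1} to \eqref{Eq:key_Eob3_2} via the inversion $t=1/s$ is also cleaner than repeating the argument with a cutoff at infinity. Both approaches ultimately hinge on the same fact---finiteness of the weighted integral forces the boundary contribution to disappear---but your version isolates this more explicitly.
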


\begin{proof}[Proof of \La{La:key_Eob1}]
First we see
\begin{Eq*}
|f(s_2)|^2g(s_2)+\int_{s_1}^{s_2} |f|^2g'\d s
\leq &2|f(s_2)|^2g(s_2)+2\int_{s_1}^{s_2} |f||f'|g\d s\\
\leq &2g(s_2)|f(s_1)|^2+6g(s_2)\int_{s_1}^{s_2} |f||f'|\d s\\
\leq &2g(s_2)|f(s_1)|^2+72g^2(s_2)\int_{s_1}^{s_2} |f'|^2(g')^{-1}\d s+\frac{1}{2}\int_{s_1}^{s_2} |f|^2g'\d s.
\end{Eq*}
This gives the proof of \eqref{Eq:key_Eob1_2}. Here we also know
\begin{Eq*}
|f(s_2)|^2g^{-1}(s_2)+\int_{s_1}^{s_2}|f|^2g^{-2}g'\d s
= &|f(s_2)|^2g^{-1}(s_2)-\int_{s_1}^{s_2} |f|^2(g^{-1})'\d s\\
\leq&|f(s_1)|^2g^{-1}(s_1)+2\int_{s_1}^{s_2} |f||f'|g^{-1}\d s\\
\leq&|f(s_1)|^2g^{-1}(s_1)+8\int_{s_1}^{s_2} |f'|^2g'\d s+\frac{1}{2}\int_{s_1}^{s_2} |f|^2g^{-2}g'\d s.
\end{Eq*}
This gives the proof of \eqref{Eq:key_Eob1_1}. 
\end{proof}
\begin{proof}[Proof of \Cy{Cy:key_Eob2}]
We first use \eqref{Eq:key_Eob1_2} with $s_2=s_1'$ and know
\begin{Eq*}
|f(s_1')|^2g^{-1}(s_1') \lesssim \int_{s_1}^{s_1'}|f'|^2(g')^{-1}\d s. 
\end{Eq*} 
Taking $s_1'\rightarrow s_1$, we know
\begin{Eq*}
\lim_{s_1'\rightarrow s_1}|f(s_1')|^2g(s_1')^{-1}=0. 
\end{Eq*}
Now, \eqref{Eq:key_Eob2} follows from \eqref{Eq:key_Eob1_1}.
\end{proof}
\begin{proof}[Proof of \Cy{Cy:key_Eob3}]
For any given $f$, we define $f_n:=\xi(ns) f$ with $\xi$ is a smooth cut-off function satisfying $\xi|_{s\leq 1}=0$ and $\xi|_{s\geq 2}=1$.
Then, using \eqref{Eq:key_Eob2} to $f_n$ with $g(s)=s^m$ and passing $n\rightarrow\infty$, we get \eqref{Eq:key_Eob3_2}. The \eqref{Eq:key_Eob3_1} follows in a similar way.
\end{proof}

\begin{lemma}\label{La:key_42}
For any $f(s,\omega)$ with $(s,\omega)\in [s_1,s_2]\times\rS^2$, we have
\begin{Eq}\label{Eq:key_42}
\sup_{s\in[s_1,s_2]}\|f\|_{L_\omega^4}^2\lesssim\inf_{s\in[s_1,s_2]}\|f\|_{L_\omega^4}^2+\|\partial_{s,\omega}^{\leq 1} f\|_{L_{s,\omega}^{2}([s_1,s_2]\times \rS^2)}^2.
\end{Eq}
\begin{proof}
Set the region to be $[s_1,s_2]\times \rS^2$. 
On one hand, we find
\begin{Eq*}
\|f\|_{L_s^\infty L_\omega^4}^4=&\sup_{s\in[s_1,s_2]}\||f|^4\|_{L_\omega^1}\\
\lesssim& \inf_{s\in[s_1,s_2]}\||f|^4\|_{L_\omega^1}+\||\partial_sf||f|^3\|_{L_s^1L_\omega^1}\\
\lesssim&\inf_{s\in[s_1,s_2]}\|f\|_{L_\omega^4}^4+\|\partial_sf\|_{L_{s,\omega}^2}\|f\|_{L_{s,\omega}^6}^3.
\end{Eq*}
On the other hand, 
using the \emph{Gagliardo-Nirenberg} inequality, 
we have
\begin{Eq*}
\|f\|_{L_{s,\omega}^6}^3
\lesssim\kl\|\|f\|_{L_\omega^4}^{\frac{2}{3}}\|\partial_\omega^{\leq 1} f\|_{L_\omega^2}^{\frac{1}{3}}\kr\|_{L_s^6}^3
\lesssim\|f\|_{L_s^\infty L_\omega^4}^{2}\|\partial_\omega^{\leq 1} f\|_{L_s^2L_\omega^2}.
\end{Eq*}
Combining these two inequality, we get
\begin{Eq*}
\|f\|_{L_s^\infty L_\omega^4}^4\leq& C\kl(\inf_{s\in[s_1,s_2]}\|f\|_{L_\omega^4}^4+\|\partial_sf\|_{L_{s,\omega}^2}\|f\|_{L_s^\infty L_\omega^4}^{2}\|\partial_\omega^{\leq 1} f\|_{L_s^2L_\omega^2}\kr)\\
\leq&C'\kl(\inf_{s\in[s_1,s_2]}\|f\|_{L_\omega^4}^4+\|\partial_sf\|_{L_{s,\omega}^2}^4+\|\partial_\omega^{\leq 1} f\|_{L_s^2L_\omega^2}^4\kr)+\frac{1}{2}\|f\|_{L_s^\infty L_\omega^4}^4.
\end{Eq*}
This finishes the proof.
\end{proof}
\end{lemma}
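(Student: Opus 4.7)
The plan is to prove this Gagliardo--Nirenberg type estimate on the cylinder $[s_1,s_2]\times\mathbb{S}^2$ by a standard absorption argument combining the fundamental theorem of calculus in the $s$ variable with a spherical Sobolev interpolation in the $\omega$ variable.

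First, I would control $\sup_{s}\|f\|_{L_\omega^4}^4$ by writing $\partial_s|f|^4=4|f|^2\,\Re(\bar f\,\partial_s f)$, integrating from a point $s_*\in[s_1,s_2]$ realizing the infimum of $\|f\|_{L_\omega^4}^4$, and applying H\"older in $(s,\omega)$. This yields
\begin{equation*}
\sup_{s\in[s_1,s_2]}\|f\|_{L_\omega^4}^4 \lesssim \inf_{s\in[s_1,s_2]}\|f\|_{L_\omega^4}^4 + \|\partial_s f\|_{L_{s,\omega}^2}\,\|f\|_{L_{s,\omega}^6}^3.
\end{equation*}

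Second, I would handle $\|f\|_{L_{s,\omega}^6}^3$ by the two-dimensional Gagliardo--Nirenberg inequality on $\mathbb{S}^2$, namely $\|f\|_{L_\omega^6}\lesssim \|f\|_{L_\omega^4}^{2/3}\|\partial_\omega^{\leq 1}f\|_{L_\omega^2}^{1/3}$. Raising to the sixth power and integrating in $s$ gives
\begin{equation*}
\|f\|_{L_{s,\omega}^6}^3 \lesssim \Bigl(\sup_{s}\|f\|_{L_\omega^4}^2\Bigr)\,\|\partial_\omega^{\leq 1}f\|_{L_{s,\omega}^2}.
\end{equation*}
Third, feeding this back into step one gives a bound of the shape $A^2\lesssim B+A\cdot C$, where $A=\sup_{s}\|f\|_{L_\omega^4}^2$, $B=\inf_{s}\|f\|_{L_\omega^4}^4$, and $C=\|\partial_s f\|_{L_{s,\omega}^2}\|\partial_\omega^{\leq 1}f\|_{L_{s,\omega}^2}$. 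I would apply Young's inequality $AC\leq\tfrac12 A^2+\tfrac12 C^2$ to absorb the $A$ into the left-hand side, obtaining $A^2\lesssim B+C^2$. A square root together with an AM--GM on the remaining product $\|\partial_s f\|\,\|\partial_\omega^{\leq 1}f\|$ then yields the stated inequality.

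The main potential obstacle is precisely the absorption step: one needs the Gagliardo--Nirenberg exponent to deposit a strictly sub-quartic power of $\sup_{s}\|f\|_{L_\omega^4}$ on the right-hand side, a feature that relies critically on the two-dimensionality of $\mathbb{S}^2$ (had we worked on $\mathbb{S}^n$ with $n\geq 3$, the interpolation exponent would be different and the argument would fail in this form). Everything else---the integration by parts in $s$, H\"older, and Young---is entirely routine.
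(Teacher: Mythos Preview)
Your proposal is correct and follows essentially the same approach as the paper: fundamental theorem of calculus in $s$ plus H\"older to get $\sup_s\|f\|_{L_\omega^4}^4\lesssim\inf_s\|f\|_{L_\omega^4}^4+\|\partial_s f\|_{L^2_{s,\omega}}\|f\|_{L^6_{s,\omega}}^3$, then the spherical Gagliardo--Nirenberg interpolation $\|f\|_{L_\omega^6}\lesssim\|f\|_{L_\omega^4}^{2/3}\|\partial_\omega^{\le 1}f\|_{L_\omega^2}^{1/3}$ to bound $\|f\|_{L^6_{s,\omega}}^3$ by $\bigl(\sup_s\|f\|_{L_\omega^4}^2\bigr)\|\partial_\omega^{\le 1}f\|_{L^2_{s,\omega}}$, followed by Young's inequality to absorb. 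The only cosmetic difference is that the paper applies Young directly to the triple product to land on $\|\partial_s f\|_{L^2}^4+\|\partial_\omega^{\le 1}f\|_{L^2}^4$, whereas you first reduce to $A^2\lesssim B+C^2$ and then take a square root and AM--GM; the outcomes are identical.
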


\subsection{Proof of \Cm{Cm:Eo_tAphi_o_tH}}\label{Sn:PoCEo_tAphi_o_tH}
In this subsection, we omit the tilde on $\tL$, $\tA$, etc., since that we will only discuss in $(\tt,\tx)$ coordinate system.
We also omit the restrict notation $u=0$ since we will only discuss on $\rH_{0}^{0,1}$.
Finally, due to the denseness, without loss of generality, we assume $(\alpha,\phi)$ vanishes near $\{v=0\}$.

\subsubsection{Preparation}
At the beginning, we give the equations that connecting $A_\Lb$, $A_L$, $\sA$ and $\phi$.
Firstly, from \emph{Lorentz} gauge \eqref{Eq:gci_tD}, we know
\begin{Eq}\label{Eq:Ro_LbAL_ug}
\Lb A_L=-LA_\Lb+2r^{-1}(A_L-A_\Lb)+2\sdiv\sA.
\end{Eq}
Then, on one hand, by the expansion formula of $A_L$ and \eqref{Eq:MKGe-swe}, we have
\begin{Eq*}
\square A_L=&2(\nabla_\nu L^\mu)(\nabla^\nu A_\mu)+A_\mu\square L^\mu+L^\mu\square A_\mu\\
=&r^{-2}(A_L-A_\Lb)+2r^{-1}\sdiv \sA-J_L.
\end{Eq*}
On the other hand, by spherical expansion formula of $\square$ and \eqref{Eq:Ro_LbAL_ug}, we have
\begin{Eq*}
\square A_L=&-r^{-1}L(r\Lb A_L)+r^{-1}LA_L+r^{-2}\Delta_{\rS^2}A_L\\
=&r^{-2}L(rL(rA_\Lb))-r^{-1}LA_L-r^{-2}A_\Lb-2r^{-1}L(r\sdiv \sA)+r^{-2}\Delta_{\rS^2}A_L.
\end{Eq*}
Mixing them, we get the compatibility condition, that is
\begin{Eq}\label{Eq:c_eq_ALb}
L(rL(rA_\Lb))=&\sum_{\chi(\bL)\leq 1}r^{\chi(\bL)}\bL A_L-\Delta_{\rS^2}A_L
+\sum\Low{\chi(\bL)\leq1\\\chi(\bOmega)\leq 1}r^{\chi(\bL)}C^i_{\bL,\bOmega}(\omega)\bL\bOmega A_{e_i}-r^2J_L.
\end{Eq}

Meanwhile, using \eqref{Eq:MKGe-swe}, we also have
\begin{Eq}\label{Eq:c_eq_sA}
L\Lb (rA_{e_i})=&r^{-1}\sum\Low{\chi(\bOmega)\leq 1} C_{i,\bOmega}(\omega)\bOmega(A_L-A_\Lb)
+r^{-1}\sum\Low{\chi(\bOmega)\leq 2} C^j_{i,\bOmega}(\omega)\bOmega A_{e_j}-rJ_{e_i},
\end{Eq}
and
\begin{Eq}\label{Eq:c_eq_phi}
(L+\I A_L)\Lb(r\phi)=&-r^{-1}\Delta_{\rS^2}\phi-\I A_L\phi-\I rA_\Lb L\phi+2\I r\sm^{ij}A_{e_i}e_j\phi
+rA_LA_\Lb\phi-r\sm^{ij}A_{e_i}A_{e_j}\phi.
\end{Eq}

Now, for the given $\alpha$, we take $A_L=0$, let $\sA$ to be the unique solution (without singularity) of 
\begin{Eq}\label{Eq:S_sA_b_alpha}
r\alpha_i=&L(rA_{e_i})-re_i A_L=L(rA_{e_i}),
\end{Eq}
and let $A_\Lb$ to be the unique solution (without singularity) of \eqref{Eq:c_eq_ALb}.
Then, such $A$ is in respect with $\alpha$ and satisfying the compatibility condition.

Next, assume that there exists another $A'$ satisfying these conditions. 
We need to show that there exists a $\xi$ such that $A=A'-\d\xi$. 
Here we mention that on $\rH_{0}^{0,1}$, $\Lb\xi$ should be understood as the unique solution (without singularity) of 
\begin{Eq*}
L(r\Lb\xi)=L\xi+r^{-1}\Delta_{\rS^2}\xi,
\end{Eq*}
since that $\square\xi=\partial^\mu(A_\mu-A_\mu')=0$ under Lorentz gauge.

Then, we take $\xi=\int_0^vA_L'\d v'$ such that $A_L'-L\xi=0=A_L$. 
By the obvious uniqueness of \eqref{Eq:S_sA_b_alpha} and \eqref{Eq:c_eq_ALb}, 
we also have $A_{e_i}'-e_i\xi=A_{e_i}$ and $A_\Lb'-\Lb\xi=A_\Lb$.
This finishes the proof of uniqueness.
From now on, we fix the $(A,\phi)$ to be the collection with $A_L=0$.
This brings a computational convenience that $D_L=L$ in next subsections.

Here we also show some behaviors of $A$ and $\phi$ near the origin $\{v=0\}$.
Noticing that the $\Lb$ direction of $(0,0,\omega)$ is exactly the $L$ direction of $(0,0,-\omega)$,
for any $k\in\rN_0$ and $\bOmega$, we know
\begin{Eq}\label{Eq:co_1}
\kl[\Lb^n\bOmega(A_\Lb,A_L,\sA,\phi)\kr](0,0,\omega)=\kl[L^n\bOmega(A_\Lb,A_L,\sA,\phi)\kr](0,0,-\omega)=0
\end{Eq}
since we have assumed that $(\alpha,\phi)$ vanishes near $\{v=0\}$.

\subsubsection{The iteration quantities}
For $N=0,1,2$, we define
\begin{Eq}\label{Eq:itf_1}
\varE[A_\Lb;N]:=&\hspace{-45pt}\sum\Low{\chi_\Lb(\bY)\leq N\\\chi_L(\bY)\leq 3-\chi_\Lb(\bY)\\\chi_\Omega(\bY)\leq 7-2\chi_\Lb(\bY)-2\Ma{\chi_L(\bY)}}\hspace{-45pt}\int_0^1\int_{\rS^2}v^{-4+2\chi_{\Lb,L}(\bY)}|\bY A_\Lb|^2\d\omega\d v,\\
\varE[A_L;N]:=&\hspace{-15pt}\sum\Low{\chi_\Lb(\bY)\leq N\\\chi_L(\bY)\leq 3-\chi_\Lb(\bY)\\\chi_\Omega(\bY)\leq 7-2\chi_{\Lb,L}(\bY)}\hspace{-15pt}\int_0^1\int_{\rS^2}v^{-4+2\chi_{\Lb,L}(\bY)}\kl|\bY  A_L\kr|^2\d\omega\d v,\\
\varE[\sA;N]:=&\hspace{-45pt}\sum\Low{\chi_\Lb(\bY)\leq N\\\chi_L(\bY)\leq 3-\chi_\Lb(\bY)\\\chi_\Omega(\bY)\leq 8-2\chi_\Lb(\bY)-2\Ma{\chi_L(\bY)}}\hspace{-45pt}\int_0^1\int_{\rS^2}v^{-4+2\chi_{\Lb,L}(\bY)}\kl|\bY  \sA\kr|^2\d\omega\d v,\\
\varE[\phi;N]:=&\hspace{-45pt}\sum\Low{\chi_\Lb(\bY)\leq N\\\chi_L(\bY)\leq 3-\chi_\Lb(\bY)\\\chi_\Omega(\bY)\leq 7-2\chi_\Lb(\bY)-2\Ma{\chi_L(\bY)}}\hspace{-45pt}\int_0^1\int_{\rS^2}v^{-4+2\chi_{\Lb,L}(\bY)}|\bY \phi|^2\d\omega\d v,\\
\varE[J_L;N]:=&\hspace{-15pt}\sum\Low{\chi_\Lb(\bY)\leq N\\\chi_L(\bY)\leq 2-\chi_\Lb(\bY)\\\chi_\Omega(\bY)\leq 5-2\chi_{\Lb,L}(\bY)}\hspace{-15pt}\int_0^1\int_{\rS^2}v^{2\chi_{\Lb,L}(\bY)}|\bY J_L|^2\d\omega\d v,\\
\varE[\sJ;N]:=&\hspace{-15pt}\sum\Low{\chi_\Lb(\bY)\leq N\\\chi_L(\bY)\leq 2-\chi_\Lb(\bY)\\\chi_\Omega(\bY)\leq 4-2\chi_{\Lb,L}(\bY)}\hspace{-15pt}\int_0^1\int_{\rS^2}v^{2\chi_{\Lb,L}(\bY)}|\bY \sJ|^2\d\omega\d v,\\
\varF[A_\Lb;N]:=&\sup_{v\in(0,1]}\hspace{-45pt}\sum\Low{\chi_\Lb(\bY)\leq N\\\chi_L(\bY)\leq 2-\chi_\Lb(\bY)\\\chi_\Omega(\bY)\leq 7-2\chi_\Lb(\bY)-2\Ma{\chi_L(\bY)}}\hspace{-45pt}v^{-3+2\chi_{\Lb,L}(\bY)}\int_{\rS^2}|\bY A_\Lb|^2\d\omega,\\
\varF[A_L;N]:=&\sup_{v\in(0,1]}\hspace{-15pt}\sum\Low{\chi_\Lb(\bY)\leq N\\\chi_L(\bY)\leq 2-\chi_\Lb(\bY)\\\chi_\Omega(\bY)\leq 7-2\chi_{\Lb,L}(\bY)}\hspace{-15pt}v^{-3+2\chi_{\Lb,L}(\bY)}\int_{\rS^2}\kl|\bY  A_L\kr|^2\d\omega,\\
\varF[\sA;N]:=&\sup_{v\in(0,1]}\hspace{-45pt}\sum\Low{\chi_\Lb(\bY)\leq N\\\chi_L(\bY)\leq 2-\chi_\Lb(\bY)\\\chi_\Omega(\bY)\leq 8-2\chi_\Lb(\bY)-2\Ma{\chi_L(\bY)}}\hspace{-45pt}v^{-3+2\chi_{\Lb,L}(\bY)}\int_{\rS^2}\kl|\bY  \sA\kr|^2\d\omega,\\
\varF[\phi;N]:=&\sup_{v\in(0,1]}\hspace{-45pt}\sum\Low{\chi_\Lb(\bY)\leq N\\\chi_L(\bY)\leq 2-\chi_\Lb(\bY)\\\chi_\Omega(\bY)\leq 7-2\chi_\Lb(\bY)-2\Ma{\chi_L(\bY)}}\hspace{-45pt}v^{-3+2\chi_{\Lb,L}(\bY)}\int_{\rS^2}|\bY \phi|^2\d\omega.
\end{Eq}

\subsubsection{The estimate of $\varE[A_\Lb;N]$}
Using the calculation trick
\begin{Eq*}
\sum_{n\leq N}s^n|\partial_s^n f|\approx \sum_{n\leq N}s^{n-m}|\partial_s^n (s^mf)|,
\end{Eq*}
 we can dominate $\varE[A_\Lb;N]$ as
\begin{Eq*}
\varE[A_\Lb;N]\lesssim&\Bigg(\hspace{-20pt}\sum\Low{\chi_\Lb(\bY)\leq N\\1\leq\chi_L(\bY)\leq 3-\chi_\Lb(\bY)\\\chi_\Omega(\bY)\leq 7-2\chi_{\Lb,L}(\bY)}+\hspace{-5pt}\sum\Low{\chi_\Lb(\bY)\leq N\\\chi_L(\bY)=0\\\chi_\Omega(\bY)\leq 5-2\chi_\Lb(\bY)}\hspace{-15pt}\Bigg)\int_0^1\int_{\rS^2}v^{-6+2\chi_{\Lb,L}(\bY)}|\bY (rA_\Lb)|^2\d\omega\d v.
\end{Eq*}
Now, we split an $L$ from $\bY$ in the integral of the first summation,
and use \eqref{Eq:key_Eob2} with \eqref{Eq:co_1} for the integral in second summation.
Then, similar to above, we find
\begin{Eq*}
\varE[A_\Lb;N]\lesssim&\hspace{-15pt}\sum\Low{\chi_\Lb(\bY)\leq N\\\chi_L(\bY)\leq 2-\chi_\Lb(\bY)\\\chi_\Omega(\bY)\leq 5-2\chi_{\Lb,L}(\bY)}\hspace{-15pt}\int_0^1\int_{\rS^2}v^{-4+2\chi_{\Lb,L}(\bY)}|\bY L(rA_\Lb)|^2\d\omega\d v\\
\lesssim&\sum\Low{\cdots}\int_0^1\int_{\rS^2}v^{-6+2\chi_{\Lb,L}(\bY)}|\bY (rL(rA_\Lb))|^2\d\omega\d v.
\end{Eq*}
Here and hereafter, we use the convention $\sum\limits_{\cdots}$ which means the summation with index same as the previous one.
Similar to the above process and using \eqref{Eq:c_eq_ALb}, we get
\begin{Eq*}
\varE[A_\Lb;N]
\lesssim&\hspace{-20pt}\sum\Low{\chi_\Lb(\bY)\leq N\\\chi_L(\bY)\leq 1-\Mi{\chi_\Lb(\bY)}\\\chi_\Omega(\bY)\leq 5-2\chi_{\Lb,L}(\bY)}\hspace{-20pt}\int_0^1\int_{\rS^2}v^{-4+2\chi_{\Lb,L}(\bY)}|\bY L(rL(rA_\Lb))|^2\d\omega\d v\\
\lesssim&\sum\Low{\cdots}\int_0^1\int_{\rS^2}v^{-4+2\chi_{\Lb,L}(\bY)}\kl|\bY \kl((rL)^{\leq 1}A_L,\Omega^2A_L,(rL)^{\leq 1}\Omega^{\leq 1}\sA,r^2J_L\kr)\kr|^2\d\omega\d v\\
\lesssim& \varE[(A_L,\sA,J_L);N].\\
\end{Eq*}

\subsubsection{The estimate of $\varE[A_L;N]$}
At first, it is easy to see that $\varE[A_L;0]=0$ since $A_L=0$.

Then, similar to above processes and using \eqref{Eq:Ro_LbAL_ug}, for $N=1,2$, we have 
\begin{Eq*}
\varE[A_L;N]\lesssim&\hspace{-10pt}\sum\Low{\chi_\Lb(\bY)\leq N-1\\\chi_L(\bY)\leq 2-\chi_\Lb(\bY)\\\chi_\Omega(\bY)\leq 5-2\chi_{\Lb,L}(\bY)}\hspace{-10pt}\int_0^1\int_{\rS^2}v^{-4+2\chi_{\Lb,L}(\bY)}\kl|\bY  \Lb(r A_L)\kr|^2\d\omega\d v+\varE[A_L;0]\\
\lesssim&\sum_{\cdots}\int_0^1\int_{\rS^2}v^{-4+2\chi_{\Lb,L}(\bY)}\kl|\bY \kl((rL)^{\leq 1}A_\Lb,A_L,\Omega^{\leq 1}\sA\kr)\kr|^2\d\omega\d v\\
\lesssim&\varE[(A_\Lb,A_L,\sA);N-1].
\end{Eq*}

\subsubsection{The estimate of $\varE[\sA;N]$}
Similar to the above processes, with the additional use of \eqref{Eq:S_sA_b_alpha} with $A_L=0$ and \eqref{Eq:Eo_talpha_o_tH}, we get
\begin{Eq*}
\varE[\sA;0]\lesssim&\hspace{-15pt}\sum\Low{\chi_\Lb(\bY)=0\\\chi_L(\bY)\leq 2\\\chi_\Omega(\bY)\leq 6-2\chi_L(\bY)}\hspace{-15pt}\int_0^1\int_{\rS^2}v^{-4+2\chi_{L}(\bY)}\kl|\bY L(r\sA)\kr|^2\d\omega\d v\\
\lesssim&\hspace{-15pt}\sum\Low{\chi_\Lb(\bY)=0\\\chi_L(\bY)\leq 2\\\chi_\Omega(\bY)\leq 6-2\chi_L(\bY)}\hspace{-15pt}\int_0^1\int_{\rS^2}v^{-2+2\chi_{L}(\bY)}\kl|\bY \alpha\kr|^2\d\omega\d v<\infty.
\end{Eq*}

As for $N=1,2$, similar to above and by use of \eqref{Eq:c_eq_sA}, we have
\begin{Eq*}
\varE[\sA;N]\lesssim&\hspace{-45pt}\sum\Low{\chi_\Lb(\bY)\leq N\\\chi_L(\bY)\leq 3-\chi_\Lb(\bY)\\\chi_\Omega(\bY)\leq 8-2\chi_\Lb(\bY)-2\Ma{\chi_L(\bY)}}\hspace{-45pt}\int_0^1\int_{\rS^2}v^{-6+2\chi_{\Lb,L}(\bY)}\kl|\bY  (r\sA)\kr|^2\d\omega\d v\\
\lesssim&\hspace{-15pt}\sum\Low{\chi_\Lb(\bY)\leq N-1\\\chi_L(\bY)\leq 1-\chi_\Lb(\bY)\\\chi_\Omega(\bY)\leq 4-2\chi_{\Lb,L}(\bY)}\hspace{-15pt}\int_0^1\int_{\rS^2}v^{-4+2\chi_{\Lb,L}(\bY)}\kl|\bY  (rL\Lb (r\sA))\kr|^2\d\omega\d v+\varE[\sA;0]\\
\lesssim&\sum\Low{\cdots}\int_0^1\int_{\rS^2}v^{-4+2\chi_{\Lb,L}(\bY)}\kl|\bY  \kl(\Omega^{\leq 1}(A_\Lb,A_L),\Omega^{\leq 2}\sA,r^2\sJ\kr) \kr|^2\d\omega\d v+\varE[\sA;0]\\
\lesssim&\varE[(A_\Lb,A_L,\sA,\sJ);N-1].
\end{Eq*}

\subsubsection{The estimate of $\varE[\phi;0]$}
Using \eqref{Eq:key_Eob3_2} and \eqref{Eq:Eo_tsAphi_o_tH}, we know that 
\begin{Eq*}
\sup_{v\in(0,1]}\hspace{-20pt}\sum\Low{n\leq 2\\|\beta|\leq 5-2\Ma{n}}\hspace{-20pt}v^{-1}\int_{\rS^2}|v^{-1+n}D_L^n D_\Omega^{\beta}\phi|^2\d\omega
\lesssim&\sum_{\cdots}\int_0^1\int_{\rS^2}v^{-2}\kl|(vL)^{\leq 1}\kl(v^{-1+n}D_L^n D_\Omega^{\beta} \phi\kr)\kr|^2\d\omega\d v\\
\lesssim&\underbrace{\hspace{-20pt}\sum\Low{n\leq 3\\|\beta|\leq 7-2\Ma{n}}\hspace{-20pt}\int_0^1\int_{\rS^2}v^{-4+2n}|D_L^n D_\Omega^{\beta} \phi|^2\d\omega\d v}_{P_1}<\infty.
\end{Eq*}

In order to use this result,
we introduce an obvious observation.
Because $D_L=L$, we have
\begin{Eq*}
\sum\Low{\chi_\Lb(\bY)=0 \\\chi_L(\bY)\leq 3\\\chi_\Omega(\bY)\leq 7-2\Ma{\chi_L(\bY)}}\hspace{-30pt}v^{-2+\chi_L(\bY)}\kl|{\bY\phi}\kr|
\lesssim &\hspace{-20pt}\sum\Low{n\leq 3\\|\beta|\leq 7-2\Ma{n}}\hspace{-20pt}v^{-3+n}\kl|{D_L^n(D_\Omega-\I r\sA)^{7-2\Ma{n}}(r\phi)}\kr|\\
\lesssim&\hspace{-20pt}\sum\Low{n\leq 3\\|\beta|\leq 7-2\Ma{n}}\hspace{-20pt}\kl|v^{-2+n}\kl(D_L^nD_\Omega^\beta\phi,L^n\Omega^\beta\sA\kr)\kr|\cdot \hspace{-20pt}\sum\Low{n\leq2\\|\beta|\leq 5-2\Ma{n}}\hspace{-20pt}\kl<v^{1+n}\kl(D_L^nD_\Omega^\beta\phi,L^n\Omega^\beta\sA\kr)\kr>^5.
\end{Eq*}

Substituting these into $\varE[\phi;0]$, using the \emph{H\"older} inequality and the \emph{Sobolev} inequality on sphere, and noticing that $v\in(0,1]$, we easily find
\begin{Eq*}
\varE[\phi;0]\lesssim& \kl(\varE[\sA;0]+P_1\kr)\cdot\kl<\varF[\sA;0]+P_1\kr>^5.
\end{Eq*}

\subsubsection{The estimate of $\varE[\phi;N]$ $(N=1,2)$}
Similar to above, for part of $\varE[\phi;N]$ with $N=1,2$, we know
\begin{Eq*}
&\hspace{-45pt}\sum\Low{1\leq\chi_\Lb(\bY)\leq N\\\chi_L(\bY)\leq 3-\chi_\Lb(\bY)\\\chi_\Omega(\bY)\leq 7-2\chi_\Lb(\bY)-2\Ma{\chi_L(\bY)}}\hspace{-40pt}\int_0^1\int_{\rS^2}v^{-6+2\chi_{\Lb,L}(\bY)}\kl|\bY  (r\phi)\kr|^2\d\omega\d v\\
\lesssim&\underbrace{\hspace{-15pt}\sum\Low{\chi_\Lb(\bY)\leq N-1\\\chi_L(\bY)\leq 1-\chi_\Lb(\bY)\\\chi_\Omega(\bY)\leq 3-2\chi_{\Lb,L}(\bY)}\hspace{-15pt}\int_0^1\int_{\rS^2}v^{-2+2\chi_{\Lb,L}(\bY)}\kl|L\bY  \Lb(r\phi)\kr|^2\d\omega\d v}_{P_2[N-1]}.
\end{Eq*}

To estimate $P_2[N]$, we take the derivative on both sides of \eqref{Eq:c_eq_phi}. Then, for $N=0,1$, we have
\begin{Eq*}
&\hspace{-15pt}\sum\Low{\chi_\Lb(\bY)\leq N\\\chi_L(\bY)\leq 1-\chi_\Lb(\bY)\\\chi_\Omega(\bY)\leq 3-2\chi_{\Lb,L}(\bY)}\hspace{-15pt}v^{-1+\chi_{\Lb,L}(\bY)}\kl|L\bY\Lb(r\phi)\kr|\\
\lesssim& \hspace{-45pt}\sum\Low{\chi_\Lb(\bY)\leq N\\\chi_L(\bY)\leq 3-\chi_\Lb(\bY)\\\chi_\Omega(\bY)\leq 7-2\chi_\Lb(\bY)-2\Ma{\chi_L(\bY)}}\hspace{-45pt}\kl|v^{-2+\chi_{\Lb,L}(\bY)}\bY (A_\Lb,A_L,\sA,\phi)\kr|
\cdot\hspace{-45pt}\sum\Low{\chi_\Lb(\bY)\leq N\\\chi_L(\bY)\leq 2-\chi_\Lb(\bY)\\\chi_\Omega(\bY)\leq 5-2\chi_\Lb(\bY)-2\Ma{\chi_L(\bY)}}\hspace{-45pt}\kl<v^{1+\chi_{\Lb,L}(\bY)}\bY(A_\Lb,A_L,\sA,\phi)\kr>^2.
\end{Eq*}
Substituting it into $P_2[N]$, 
similar to the process of estimating $\varE[\phi;0]$,
for $N=0,1$, we easily find
\begin{Eq*}
P_2[N]\lesssim \varE[(A_\Lb,A_L,\sA,\phi);N]\cdot \kl<\varF[(A_\Lb,A_L,\sA,\phi);N]\kr>^2.
\end{Eq*}
Back to $\varE[\phi;N]$ with $N=1,2$, we finally find
\begin{Eq*}
\varE[\phi;N]\lesssim& \varE[(A_\Lb,A_L,\sA,\phi);N-1]\cdot \kl<\varF[(A_\Lb,A_L,\sA,\phi);N-1]\kr>^2+\varE[\phi;0].
\end{Eq*}

\subsubsection{The estimate of $\varE[(J_L,\sJ);N]$}
For $J_L$ part, similar to above, we calculate that
\begin{Eq*}
&\hspace{-15pt}\sum\Low{\chi_\Lb(\bY)\leq N\\\chi_L(\bY)\leq 2-\chi_\Lb(\bY)\\\chi_\Omega(\bY)\leq 5-2\chi_{\Lb,L}(\bY)}\hspace{-15pt}v^{\chi_{\Lb,L}(\bY)}|\bY J_L|\\
\lesssim &\hspace{-45pt}\sum\Low{\chi_\Lb(\bY)\leq N\\\chi_L(\bY)\leq  3-\chi_{\Lb}(\bY)\\\chi_\Omega(\bY)\leq 7-2\chi_\Lb(\bY)-2\Ma{\chi_L(\bY)}}\hspace{-45pt}\kl|v^{-2+\chi_{\Lb,L}(\bY)}\bY(A_L,\phi)\kr|
\cdot\hspace{-45pt}\sum\Low{\chi_\Lb(\bY)\leq N\\\chi_L(\bY)\leq  2-\chi_{\Lb}(\bY)\\\chi_\Omega(\bY)\leq 5-2\chi_\Lb(\bY)-2\Ma{\chi_L(\bY)}}\hspace{-45pt}\kl<v^{1+\chi_{\Lb,L}(\bY)}\bY(A_L,\phi)\kr>^2.
\end{Eq*}
Then, similar to above, we find
\begin{Eq*}
\varE[J_L;N]\lesssim \varE[(A_L,\phi);N]\kl<\varF[(A_L,\phi);N]\kr>^2,
\end{Eq*}
and similarly,
\begin{Eq*}
\varE[\sJ;N]\lesssim \varE[(\sA,\phi);N]\kl<\varF[(\sA,\phi);N]\kr>^2.
\end{Eq*}

\subsubsection{The estimate of $\varF[(A_\Lb,A_L,\sA,\phi);N]$}
For $A_\Lb$ part, using \eqref{Eq:key_Eob3_2}, we find
\begin{Eq*}
\varF[A_\Lb;N]=&\sup_{v\in(0,1]}\hspace{-45pt}\sum\Low{\chi_\Lb(\bY)\leq N\\\chi_L(\bY)\leq 2-\chi_\Lb(\bY)\\\chi_\Omega(\bY)\leq 7-2\chi_\Lb(\bY)-2\Ma{\chi_L(\bY)}}\hspace{-45pt}v^{-1}\int_{\rS^2}|v^{-1+\chi_{\Lb,L}(\bY)}\bY A_\Lb|^2\d\omega\\
\lesssim&\sum_{\cdots}\int_0^1\int_{\rS^2}v^{-2}\kl|(vL)^{\leq 1}\kl(v^{-1+\chi_{\Lb,L}(\bY)}\bY A_\Lb\kr)\kr|^2\d\omega\d v \lesssim \varE[A_\Lb;N].
\end{Eq*}
Similarly, we also have
\begin{Eq*}
\varF[A_L;N]\lesssim \varE[A_L;N],\qquad \varF[\sA;N]\lesssim \varE[\sA;N],\qquad \varF[\phi;N]\lesssim \varE[\phi;N].
\end{Eq*}

\subsubsection{The end of the proof}
Mixing these estimates, we get that all quantities defined in \eqref{Eq:itf_1} are finite.
Meanwhile, a direct calculation shows that for any $\beta$, there are
\begin{Eq*}
\partial^\beta f=\sum_{\chi(\bY)\leq|\beta|}C_{\beta,\bY}(\omega)r^{-|\beta|+\chi_{\Lb,L}(\bY)}\bY f.
\end{Eq*}
Then, we finally find
\begin{Eq*}
&\sum\Low{|\alpha|\leq 1\\|\beta|\leq 2}\int_{\rH_0^{0,1}}|(\spartial,L)^\alpha\partial^\beta (A,\phi)|^2\\
\lesssim&\sum\Low{|\alpha|\leq 1\\\chi(\bY)\leq|\beta|\leq 2}\int_0^1\int_{\rS^2}v^2\kl|(\spartial,L)^\alpha \kl(C_{\beta,\bY}(\omega) r^{-|\beta|+\chi_{\Lb,L}(\bY)}\bY(A,\phi)\kr)\kr|^2\d \omega\d v\\
\lesssim&\hspace{-10pt}\sum\Low{\chi_{\Lb}(\bY)\leq 2\\\chi_{L,\Omega}(\bY)\leq 3-\chi_{\Lb}(\bY)}\hspace{-10pt}\int_0^1\int_{\rS^2}v^{-4+2\chi_{\Lb,L}(\bY)}|\bY (A,\phi)|^2\d\omega\d v\\
\lesssim&\sum\Low{\cdots}\int_0^1\int_{\rS^2}v^{-4+2\chi_{\Lb,L}(\bY)}|\bY (A_\Lb,A_L,\sA,\phi)|^2\d\omega\d v
\lesssim \varE[(A_\Lb,A_L,\sA,\phi);2]<\infty.
\end{Eq*}
This finishes the proof. 

\subsection{Proof of \Cm{Cm:Eo_FphiZ_oeni}}\label{Sn:PoCEo_FphiZ_oeni}
As that in \Sn{Sn:Taierp1}, we allow the constant in $\lesssim$ to depend on the initial data, 
and use the convention $\INF{f}$ which means $\lim_{v\rightarrow\infty}f$.

\subsubsection{The iteration form}
For $N=0,1,2$, we define
\begin{Eq}\label{Eq:itf_21}
\varE[\rho;N]:=&\hspace{-45pt}\sum\Low{n\leq N\\\chi_\Lb(\bY)\leq 2-n,~\chi_L(\bY)=0\\\chi_\Omega(\bY)\leq 7-2n-2\Ma{\chi_\Lb(\bY)}}\hspace{-45pt}\int_{-\infty}^{-1}\int_{\rS^2}|u|^{2\chi_\Lb(\bY)-2n-\varepsilon_1}\kl|\INFt{(r^2L)^n\bY (r^2\rrho)}\kr|^2\d\omega\d u,\\
\varE[\sigma;N]:=&\hspace{-45pt}\sum\Low{n\leq N\\\chi_\Lb(\bY)\leq 2-n,~\chi_L(\bY)=0\\\chi_\Omega(\bY)\leq 7-2n-2\Ma{\chi_\Lb(\bY)}}\hspace{-45pt}\int_{-\infty}^{-1}\int_{\rS^2}|u|^{2\chi_\Lb(\bY)-2n-\varepsilon_1}\kl|\INFt{(r^2L)^n\bY (r^2\sigma)}\kr|^2\d\omega\d u,\\
\varE[\alphab;N]:=&\hspace{-30pt}\sum\Low{n\leq N\\\chi_\Lb(\bY)\leq 2-n,~\chi_L(\bY)=0\\\chi_\Omega(\bY)\leq 6-2n-2\chi_\Lb(\bY)}\hspace{-30pt}\int_{-\infty}^{-1}\int_{\rS^2}|u|^{2+2\chi_\Lb(\bY)-2n-\varepsilon_1}\kl|\INFt{(r^2L)^n\bY (r\alphab)}\kr|^2\d\omega\d u,\\
\varE[\phi;N]:=&\hspace{-45pt}\sum\Low{n\leq N\\\chi_\Lb(\bY)\leq 3-n,~\chi_L(\bY)=0\\\chi_\Omega(\bY)\leq 7-2n-2\Ma{\chi_\Lb(\bY)}}\hspace{-45pt}\int_{-\infty}^{-1}\int_{\rS^2}|u|^{2\chi_\Lb(\bY)-2n-\varepsilon_1}\kl|\INFt{(r^2D_L)^nD_\bY(r\phi)}\kr|^2\d\omega\d u,\\
\varE[J_\Lb;N]:=&\hspace{-30pt}\sum\Low{n\leq N\\\chi_\Lb(\bY)\leq 2-n,~\chi_L(\bY)=0\\\chi_\Omega(\bY)\leq 5-2n-2\chi_\Lb(\bY)}\hspace{-30pt}\int_{-\infty}^{-1}\int_{\rS^2}|u|^{3+2\chi_\Lb(\bY)-2n-2\varepsilon_1}\kl|\INFt{(r^2L)^n\bY (r^2J_\Lb)}\kr|^2\d\omega\d u,\\
\varE[\sJ;N]:=&\hspace{-45pt}\sum\Low{n\leq N\\\chi_\Lb(\bY)\leq 2-n,~\chi_L(\bY)=0\\\chi_\Omega(\bY)\leq 6-2n-2\Ma{\chi_\Lb(\bY)}}\hspace{-45pt}\int_{-\infty}^{-1}\int_{\rS^2}|u|^{1+2\chi_\Lb(\bY)-2n-2\varepsilon_1}\kl|\INFt{(r^2L)^n\bY (r^3\sJ)}\kr|^2\d\omega\d u,\\
\varF[\phi;N]:=&\sup_{u\in(-\infty,-1]}\hspace{-45pt}\sum\Low{n\leq N\\\chi_\Lb(\bY)\leq 2-n,~\chi_L(\bY)=0\\\chi_\Omega(\bY)\leq 7-2n-2\Ma{\chi_\Lb(\bY)}}\hspace{-45pt}|u|^{1+2\chi_\Lb(\bY)-2n-\varepsilon_1}\int_{\rS^2}\kl|\INFt{(r^2D_L)^nD_\bY(r\phi)}\kr|^2\d\omega.
\end{Eq}
For $N=0,1$, we define
\begin{Eq}\label{Eq:itf_22}
\varF[\rho;N]:=&\sup_{u\in(-\infty,-1]}\hspace{-45pt}\sum\Low{n\leq N\\\chi_\Lb(\bY)\leq 1-n,~\chi_L(\bY)=0\\\chi_\Omega(\bY)\leq 7-2n-2\Ma{\chi_\Lb(\bY)}}\hspace{-45pt}|u|^{1+2\chi_\Lb(\bY)-2n-\varepsilon_1}\int_{\rS^2}\kl|\INFt{(r^2L)^n\bY (r^2\rrho)}\kr|^2\d\omega,\\
\varF[\sigma;N]:=&\sup_{u\in(-\infty,-1]}\hspace{-45pt}\sum\Low{n\leq N\\\chi_\Lb(\bY)\leq 1-n,~\chi_L(\bY)=0\\\chi_\Omega(\bY)\leq 7-2n-2\Ma{\chi_\Lb(\bY)}}\hspace{-45pt}|u|^{1+2\chi_\Lb(\bY)-2n-\varepsilon_1}\int_{\rS^2}\kl|\INFt{(r^2L)^n\bY (r^2\sigma)}\kr|^2\d\omega,\\
\varF[\alphab;N]:=&\sup_{u\in(-\infty,-1]}\hspace{-30pt}\sum\Low{n\leq N\\\chi_\Lb(\bY)\leq 1-n,~\chi_L(\bY)=0\\\chi_\Omega(\bY)\leq 6-2n-\chi_\Lb(\bY)}\hspace{-30pt}|u|^{3+2\chi_\Lb(\bY)-2n-\varepsilon_1}\int_{\rS^2}\kl|\INFt{(r^2L)^n\bY (r\alphab)}\kr|^2\d\omega,\\
\varF[\alpha;N]:=&\sup_{u\in(-\infty,-1]}\hspace{-45pt}\sum\Low{n\leq N\\\chi_\Lb(\bY)\leq 1-n,~\chi_L(\bY)=0\\\chi_\Omega(\bY)\leq 6-2n-2\Ma{\chi_\Lb(\bY)}}\hspace{-45pt}|u|^{-1+2\chi_\Lb(\bY)-2n-\varepsilon_1}\int_{\rS^2}\kl|\INFt{(r^2L)^n\bY (r^3\alpha)}\kr|^2\d\omega,\\
\varF[J_L;N]:=&\sup_{u\in(-\infty,-1]}\hspace{-45pt}\sum\Low{n\leq N\\\chi_\Lb(\bY)\leq 1-n,~\chi_L(\bY)=0\\\chi_\Omega(\bY)\leq 5-2n-2\Ma{\chi_\Lb(\bY)}}\hspace{-45pt}|u|^{2\chi_\Lb(\bY)-2n-2\varepsilon_1}\int_{\rS^2}\kl|\INFt{(r^2L)^n\bY (r^4J_L)}\kr|^2\d\omega,\\
\varF[\sJ;N]:=&\sup_{u\in(-\infty,-1]}\hspace{-45pt}\sum\Low{n\leq N\\\chi_\Lb(\bY)\leq 1-n,~\chi_L(\bY)=0\\\chi_\Omega(\bY)\leq 6-2n-2\Ma{\chi_\Lb(\bY)}}\hspace{-45pt}|u|^{2+2\chi_\Lb(\bY)-2n-2\varepsilon_1}\int_{\rS^2}\kl|\INFt{(r^2L)^n\bY (r^3\sJ)}\kr|^2\d\omega.
\end{Eq}

\subsubsection{The estimate of $\varE[\rho;0]$}
To begin with, by \eqref{Eq:MKGe-ud_FZ}, we have
\begin{Eq*}
\INFt{\Lb(r^2\rho)}=\INFt{[r\sdiv](r\alphab)-\Im\kl((r\phi)\overline{D_\Lb(r\phi)}\kr)}=\rsdiv \Alphab-\Im\kl(\Phi\overline{D_u\Phi}\kr).
\end{Eq*}
Using \eqref{Eq:crosd} and the fact that $\rho_r=\rho-\q r^{-2}$, we find 
\begin{Eq*}
\lim_{u\rightarrow-\infty}\INFt{r^2\rrho}=\lim_{u\rightarrow-\infty}\INFt{r^2\rho}-\q=\lim_{u\rightarrow\infty}\INFt{r^2\rho}.
\end{Eq*}
Combing this with \eqref{Eq:Eo_rho_oini}, we get
\begin{Eq*}
\lim_{u\rightarrow-\infty}\int_{\rS^2}\kl|\INFt{r^2\Omega^{\leq 5}\rrho}\kr|^2\d\omega=\lim_{u\rightarrow\infty}\int_{\rS^2}\kl|\INFt{r^2\Omega^{\leq 5}\rho}\kr|^2\d\omega=0.
\end{Eq*}

Now, similar to the estimate of $\varE[A_\Lb;N]$ in \Sn{Sn:PoCEo_tAphi_o_tH}, using \eqref{Eq:key_Eob2} and \eqref{Eq:MKGe-ud_FZ}, we know
\begin{Eq*}
\varE[\rho;0]
\lesssim& \Bigg(\hspace{-10pt}\sum\Low{1\leq\chi_\Lb(\bY)\leq 2\\\chi_L(\bY)=0\\\chi_\Omega(\bY)\leq 7-2\chi_\Lb(\bY)}\hspace{-10pt}+\sum\Low{\chi_{\Lb,L}(\bY)=0\\\chi_\Omega(\bY)\leq 5}\Bigg)\int_{-\infty}^{-1}\int_{\rS^2}|u|^{2\chi_\Lb(\bY)-\varepsilon_1}\kl|\INFt{\bY (r^2\rrho)}\kr|^2\d\omega\d u\\
\lesssim& \sum\Low{\chi_\Lb(\bY)\leq 1\\\chi_L(\bY)=0\\\chi_\Omega(\bY)\leq 5-2\chi_\Lb(\bY)}\int_{-\infty}^{-1}\int_{\rS^2}|u|^{2+2\chi_\Lb(\bY)-\varepsilon_1}\kl|\INFt{\bY \Lb(r^2\rrho)}\kr|^2\d\omega\d u\\
\lesssim& \sum_{\cdots} \int_{-\infty}^{-1}\int_{\rS^2}|u|^{2+2\chi_\Lb(\bY)-\varepsilon_1}\kl|\INFt{\bY\kl(\Omega^{\leq 1}(r\alphab),r^2J_\Lb\kr)}\kr|^2\d\omega\d u\\
\lesssim&\varE[(\alphab,J_\Lb);0].
\end{Eq*}

\subsubsection{The estimate of $\varE[\rho;N]$ $(N=1,2)$}
Similar to above, by \eqref{Eq:Eo_rho_oini}, we also know
\begin{Eq*}
\int_{\rS^2}\kl|\INFt{(r^2L)\Omega^{\leq 3}(r^2\rrho)}\kr|_{u=-1}^2\d\omega\lesssim 1.
\end{Eq*}
On the other hand, by \eqref{Eq:MKGe-ud_FZ}, we find
\begin{Eq*}
\int_{\rS^2}\kl|\INFt{(r^2L)^2\Omega^{\leq 1}(r^2\rrho)}\kr|_{u=-1}^2\d\omega
\lesssim&\int_{\rS^2}\kl|\INFt{(r^2L)\Omega^{\leq 1}(\Omega^{\leq 1}\kl(r^3\alpha),r^4J_L\kr)}\kr|_{u=-1}^2\d\omega
\lesssim \varF[(\alpha,J_L);1].
\end{Eq*}
Now, similar to above, using \eqref{Eq:key_Eob1_1} and \eqref{Eq:MKGe-ud_FZ}, for $N=1,2$, we know
\begin{Eq*}
\varE[\rho;N]
\lesssim&\hspace{-40pt}\sum\Low{n\leq N\\\chi_\Lb(\bY)\leq 1-\Mi{n},~\chi_L(\bY)=0\\\chi_\Omega(\bY)\leq 5-2n-2\chi_\Lb(\bY)}\hspace{-40pt}\int_{-\infty}^{-1}\int_{\rS^2}|u|^{2+2\chi_\Lb(\bY)-2n-\varepsilon_1}\kl|\INFt{(r^2L)^n\bY \Lb(r^2\rrho)}\kr|^2\d\omega\d u\\ 
&\quad+\varE[\rho;0]+1+\varF[(\alpha,J_L);N-1]\\
\lesssim&\sum_{\cdots}\int_{-\infty}^{-1}\int_{\rS^2}|u|^{2+2\chi_\Lb(\bY)-2n-\varepsilon_1}\kl|\INFt{(r^2L)^n\bY\kl(\Omega^{\leq 1}(r\alphab),r^2J_\Lb\kr)}\kr|^2\d\omega\d u\\ 
&\quad+\varE[\rho;0]+1+\varF[(\alpha,J_L);N-1]\\
\lesssim&\varE[(\alphab,J_\Lb);N]+\varE[\rho;0]+1+\varF[(\alpha,J_L);N-1].
\end{Eq*}

\subsubsection{The estimate of $\varE[\sigma;N]$} 
First for $N=0$, similar to above, by \eqref{Eq:MKGe-ud_FZ}, \eqref{Eq:crosd} and \eqref{Eq:Eo_sigma_oini}, we know
\begin{Eq*}
\lim_{u\rightarrow-\infty}\int_{\rS^2}\kl|\INFt{r^2\Omega^{\leq 5}\sigma}\kr|^2\d\omega=\lim_{u\rightarrow\infty}\int_{\rS^2}\kl|\INFt{r^2\Omega^{\leq 5}\sigma}\kr|^2\d\omega=0.
\end{Eq*}
Then, similar to the process of estimating $\varE[\rho;0]$, using \eqref{Eq:key_Eob2} and \eqref{Eq:MKGe-ud_FZ}, we also know
\begin{Eq*}
\varE[\sigma;0]\lesssim&\varE[\alphab;0].
\end{Eq*}

On the other hand, by \eqref{Eq:Eo_sigma_oini}, we know
\begin{Eq*}
\sum_{n\leq 2}\int_{\rS^2}\kl|\INFt{(r^2L)^{n}\Omega^{\leq 5-2n}(r^2\sigma)}\kr|_{u=-1}^2\d\omega\lesssim 1.
\end{Eq*}
Then, similar to above, for $N=1,2$, we know
\begin{Eq*}
\varE[\sigma;N]\lesssim&\varE[\alphab;N]+1.
\end{Eq*}

\subsubsection{The estimate of $\varE[\alphab;N]$}
Noticing $\mathcal{E}_{\varepsilon_1}<\infty$, we know
\begin{Eq*}
\varE[\alphab;0]\lesssim 1.
\end{Eq*}
As for $N=1,2$, similar to above and using \eqref{Eq:MKGe-ud_FZ}, we know
\begin{Eq*}
\varE[\alphab;N]\lesssim&\hspace{-20pt}\sum\Low{n\leq N-1\\\chi_\Lb(\bY)\leq 1-n,~\chi_L(\bY)=0\\\chi_\Omega(\bY)\leq 4-2n-2\chi_\Lb(\bY)}\hspace{-20pt}\int_{-\infty}^{-1}\int_{\rS^2}|u|^{2\chi_\Lb(\bY)-2n-\varepsilon_1}\kl|\INFt{(r^2L)^{n}\bY(r^2L)(r\alphab)}\kr|^2\d\omega\d u+\varE[\alphab;0]\\
\lesssim&\sum_{\cdots}\int_{-\infty}^{-1}\int_{\rS^2}|u|^{2\chi_\Lb(\bY)-2n-\varepsilon_1}\kl|\INFt{(r^2L)^n\bY\kl(\Omega(r^2\rrho,r^2\sigma),r^3\sJ\kr)}\kr|^2\d\omega\d u+\varE[\alphab;0]\\
\lesssim& \varE[(\rho,\sigma,\alphab,\sJ);N-1].
\end{Eq*}

\subsubsection{The estimate of $\varE[\phi;0]$}
Noticing $\mathcal{E}_{\varepsilon_1}<\infty$, we know that
\begin{Eq*}
\hspace{-15pt}\sum\Low{n\leq 3\\|\beta|\leq 7-2\Ma{n}}\hspace{-15pt}\int_{-\infty}^{-1}\int_{\rS^2}|u|^{2n-\varepsilon_1}\kl|\INFt{D_\Lb^nD_\Omega^\beta(r\phi)}\kr|^2\d\omega\d u\lesssim 1.
\end{Eq*}
To estimate $\varE[\phi;0]$, we only need to notice that by \eqref{Eq:Cocd}, we know $[D_\Lb,D_\Omega]=\I r\alphab$. 
Then, similar to the process of estimating $\varE[\phi;0]$ in \Sn{Sn:PoCEo_tAphi_o_tH}, we easily get that
\begin{Eq*}
\varE[\phi;0]\lesssim (1+\varE[\alphab;0])\kl(1+\varF[\alphab;0]\kr)^5.
\end{Eq*}

\subsubsection{The estimate of $\varE[\phi;N]$ $(N=1,2)$}
The estimate of $\varE[\phi;N]$ $(N=1,2)$ is much more complicated.
To use the equation \eqref{Eq:MKGe-ud_phi}, we try to place a $D_\Lb$ in the front of $r\phi$ in $\varE[\phi;N]$.

If there is no $\Lb$ in $\bY$, we will use \eqref{Eq:key_Eob1_1} with \eqref{Eq:Eo_phi_oini}.
This step will add a $D_\Lb$ in front of $(r^2D_L)^n$.
Note that the total amount of derivatives increases by $1$ in this step.

Otherwise, if there is an $\Lb$ in $\bY$, or there is a $D_\Lb$ created by last step,
we are hoping to move it to the front of $r\phi$.
However, by \eqref{Eq:Cocd}, we know there will be extra terms with $r^2\rho$, $r\alphab$ or their derivatives produced in this step.
Nevertheless, the total amount of derivatives in the extra terms decreases by $2$.

Now, similar to the estimate of $\varE[\phi;0]$,
after a finite steps, for $N=1,2$, we can get that
\begin{Eq*}
\varE[\phi;N]
\lesssim& \bigg(\underbrace{\hspace{-20pt}\sum\Low{1\leq n\leq N\\\chi_\Lb(\bY)\leq 2-n,~\chi_L(\bY)=0\\\chi_\Omega(\bY)\leq 5-2n-2\chi_\Lb(\bY)}\hspace{-20pt}\int_{-\infty}^{-1}\int_{\rS^2}|u|^{2+2\chi_\Lb(\bY)-2n-\varepsilon_1}\kl|\INFt{(r^2D_L)^nD_\bY D_\Lb(r\phi)}\kr|^2\d\omega\d u}_{P_3[N]}\\
&\quad \phantom{\bigg(}+\varE[\alphab;N]+\varE[\phi;N-1]\bigg)\times\big(1+\varF[(\rho,\alphab,\phi);N-1]\big)^m
\end{Eq*}
with some $m$ large enough.
%

Next, we try to move one $r^2D_L$ to the front of $D_\Lb(r\phi)$ in $P_3[N]$. 
By a similar discussion as above, for $N=1,2$, we know
\begin{Eq*}
P_3[N]\lesssim&\bigg(\underbrace{\hspace{-20pt}\sum\Low{n\leq N-1\\\chi_\Lb(\bY)\leq 1-n,~\chi_L(\bY)=0\\\chi_\Omega(\bY)\leq 3-2n-2\chi_\Lb(\bY)}\hspace{-20pt}\int_{-\infty}^{-1}\int_{\rS^2}|u|^{2\chi_\Lb(\bY)-2n-\varepsilon_1}\kl|\INFt{(r^2D_L)^nD_\bY (r^2D_LD_\Lb)(r\phi)}\kr|^2\d\omega\d u}_{P_4[N]}\\
&\quad\phantom{\bigg(}+\varE[\phi;N-1]\bigg)\times \big(1+\varF[(\rho,\alphab,\alpha,\phi);N-1]\big)^m
\end{Eq*}
with some $m$ large enough.

Finally, using the equation \eqref{Eq:MKGe-ud_phi}, similar to above, we find
\begin{Eq*}
P_4[N]\lesssim \varE[(\rho,\phi);N-1](1+\varF[(\rho,\phi);N-1]).
\end{Eq*}
It means
\begin{Eq*}
\varE[\phi;N]\lesssim (\varE[\alphab;N]+\varE[\phi;N-1])\big(1+\varF[(\rho,\alphab,\alpha,\phi);N-1]\big)^m
\end{Eq*}
with some $m$ large enough.

\subsubsection{The estimate of $\varE[(J_\Lb,\sJ);N]$ and $\varF[(J_L,\sJ);N]$}
For $N=0,1,2$, we know
\begin{Eq*}
\hspace{-25pt}\sum\Low{n\leq N\\\chi_\Lb(\bY)\leq 2-n,~\chi_L(\bY)=0\\\chi_\Omega(\bY)\leq 5-2n-2\chi_\Lb(\bY)}\hspace{-25pt}\kl|(r^2L)^n\bY(r^2J_\Lb)\kr|
\lesssim\hspace{-45pt}\sum\Low{n\leq N\\\chi_\Lb(\bY)\leq 3-n,~\chi_L(\bY)=0\\\chi_\Omega(\bY)\leq 7-2n-2\Ma{\chi_\Lb(\bY)}}\hspace{-45pt}\kl|(r^2D_L)^nD_\bY (r\phi)\kr|\cdot \hspace{-35pt}\sum\Low{n\leq \Mi{N}\\\chi_\Lb(\bY)\leq 2-n,~\chi_L(\bY)=0\\\chi_\Omega(\bY)\leq 5-2n-2\Ma{\chi_\Lb(\bY)}}\hspace{-45pt} |(r^2D_L)^{n}D_{\bY} (r\phi)|.
\end{Eq*}
Then, using \emph{H\"older} inequality, and \emph{Sobolev} inequality on sphere, we easily know 
\begin{Eq*}
\varE[J_\Lb;N]\lesssim \varE[\phi;N] \varF[\phi;N].
\end{Eq*}
Similarly, we also know
\begin{Eq*}
\varE[\sJ;N]\lesssim \varE[\phi;N] \varF[\phi;N],\qquad
\varF[J_L;N]\lesssim \varF[\phi;N+1]^2,\qquad
\varF[\sJ;N]\lesssim \varF[\phi;N]^2.
\end{Eq*}

\subsubsection{The estimate of $\varF[(\phi,\rho,\sigma,\alphab);N]$}
Using \eqref{Eq:key_Eob3_1}, for $N=0,1,2$, we have
\begin{Eq*}
\varF[\phi;N]\approx& \sup_{u\in(-\infty,-1]}\hspace{-45pt}\sum\Low{n\leq N\\\chi_\Lb(\bY)\leq 2-n,~\chi_L(\bY)=0\\\chi_\Omega(\bY)\leq 7-2n-2\Ma{\chi_\Lb(\bY)}}\hspace{-45pt}|u|\int_{\rS^2}\kl||u|^{\chi_\Lb(\bY)-n-\frac{1}{2}\varepsilon_1}\INFt{(r^2D_L)^nD_\bY (r\phi)}\kr|^2\d\omega\\
\lesssim&\sum_{\cdots}\int_{-\infty}^{-1}\int_{\rS^2}\kl|\INFt{(uD_\Lb)^{\leq 1}\kl(|u|^{\chi_\Lb(\bY)-n-\frac{1}{2}\varepsilon_1}(r^2D_L)^nD_\bY (r\phi)\kr)}\kr|^2\d\omega\d u.
\end{Eq*}
Now, using \eqref{Eq:Cocd}, we know
\begin{Eq*}
\varF[\phi;N]\lesssim&\varE[\phi;N]\cdot \kl(1+\sum_{n\leq N-1}|u|^{-2n}\kl\|\INFt{(r^2L)^n(r^2\rho)}\kr\|_{L_\omega^\infty}^2\kr)\\
\lesssim&\varE[\phi;N](1+\varF[\rho;N-1])
\end{Eq*}
with $\varF[\rho;-1]:=0$. By a similar process, we also have
\begin{Eq*}
\varF[\rho;N]\lesssim \varE[\rho;N],
\qquad \varF[\sigma;N]\lesssim \varE[\sigma;N],
\qquad \varF[\alphab;N]\lesssim \varE[\alphab;N].
\end{Eq*}

\subsubsection{The estimate of $\varF[\alpha;N]$}
By \eqref{Eq:Eo_alpha_oini}, we know
\begin{Eq*}
\sum_{n\leq 1}\int_{\rS^2}\kl|\INFt{(r^2L)^n\Omega^{4-2n}(r^3\alpha)}\kr|_{u=-1}^2\d\omega\lesssim 1.
\end{Eq*}

Then, using \eqref{Eq:key_Eob1_1}, \eqref{Eq:spts_3} and \eqref{Eq:MKGe-ud_FZ}, we know
\begin{Eq*}
&\sup_{u\in(-\infty,-1]}\hspace{0pt}\sum\Low{n\leq N\\\chi_{\Lb,L}(\bY)=0\\\chi_\Omega(\bY)\leq 4-2n}|u|^{-1-2n-\varepsilon_1}\int_{\rS^2}\kl|\INFt{(r^2L)^n\bY (r^3\alpha)}\kr|^2\d\omega\\
\lesssim&\sum\Low{n\leq N\\\chi_{\Lb,L}(\bY)=0\\\chi_\Omega(\bY)\leq 4-2n}\int_{-\infty}^{-1}\int_{\rS^2}|u|^{-2n-\varepsilon_1}\kl|\INFt{(r^2L)^n\bY \Lb(rv^2\alpha)}\kr|^2\d\omega\d u+1\\
\lesssim&\sum_{\cdots}\int_{-\infty}^{-1}\int_{\rS^2}|u|^{-2n-\varepsilon_1}\kl|\INFt{(r^2L)^n\bY \kl(\Omega(r^2\rrho,r^2\sigma),r^3\sJ\kr)}\kr|^2\d\omega\d u+1\\
\lesssim&\varE[(\rho,\sigma,\sJ);N]+1.
\end{Eq*}
Meanwhile, by  \eqref{Eq:spts_3} and \eqref{Eq:MKGe-ud_FZ}, we also know
\begin{Eq*}
&\sup_{u\in(-\infty,-1]}\sum\Low{\chi_\Lb(\bY)= 1\\\chi_L(\bY)=0\\\chi_\Omega(\bY)\leq 4}|u|^{1-\varepsilon_1}\int_{\rS^2}\kl|\INFt{\bY (r^3\alpha)}\kr|^2\d\omega\\
\lesssim& \sup_{u\in(-\infty,-1]}\sum\Low{\chi_{\Lb,L}(\bY)=0\\\chi_\Omega(\bY)\leq 4}|u|^{1-\varepsilon_1}\int_{\rS^2}\kl|\INFt{\bY \Lb(rv^2\alpha)}\kr|^2\d\omega\\
\lesssim& \sup_{u\in(-\infty,-1]}\sum\Low{\chi_{\Lb,L}(\bY)=0\\\chi_\Omega(\bY)\leq 4}|u|^{1-\varepsilon_1}\int_{\rS^2}\kl|\INFt{\bY \kl(\Omega(r^2\rrho,r^2\sigma),r^3\sJ\kr)}\kr|^2\d\omega\\
\lesssim&\varF[(\rho,\sigma,\sJ);N].
\end{Eq*}
Mixing them, we finally have
\begin{Eq*}
\varF[\alpha;N]\lesssim \varE[(\rho,\sigma,\sJ);N]+\varF[(\rho,\sigma,\sJ);N]+1.
\end{Eq*}

\subsubsection{End of the proof}
Now, using iteration, we easily find that all quantities defined in \eqref{Eq:itf_21} and \eqref{Eq:itf_22} can be controlled by some constant depended on the initial data.

Now, using the calculation trick
\begin{Eq*}
\kl|\kl(s^2\partial_s\kr)^N(v^{-m}f)\kr|\lesssim\sum_{\max\{N-m,0\}\leq n\leq N}v^{N-m-n}|\kl(s^2 \partial_s\kr)^nf|,
\end{Eq*}
and the fact that $\INF{v^{\leq 1} f}=0$ while $\INF{v^2 f}$ is finite,
we know
\begin{Eq*}
\sum_{\chi(\bZ)\leq 2}|u|^{-\chi_K(\bZ)}\kl|\INFt{r^2\rrho^{(\bZ)}}\kr|
\lesssim&\sum_{\chi(\bZ)\leq 2}|u|^{-\chi_K(\bZ)}\kl|\INFt{\bZ (r^2\rrho)}\kr|\\
\lesssim& \sum\Low{n\leq 2\\\chi_{\Lb,\Omega}(\bY)\leq 2-n\\\chi_L(\bY)=0}|u|^{\chi_\Lb(\bY)-n}\kl|\INF{(r^2L)^n\bY(r^2\rrho)}\kr|.
\end{Eq*}
It means that for all $\chi(\bZ)\leq 2$, there is
\begin{Eq*}
\int_{-\infty}^{-1}\int_{\rS^2} |u|^{-2\chi_K(\bZ)-\varepsilon_1}\kl|\INF{r^2\rrho^{(\bZ)}}\kr|^2\d\omega\d u\lesssim \varE[\rho;2]\lesssim 1.
\end{Eq*}
Through similar discussions, we also know
\begin{Eq*}
&\int_{-\infty}^{-1}\int_{\rS^2} |u|^{-2\chi_K(\bZ)-\varepsilon_1}\kl|\INF{\kl(r^2\sigma^{(\bZ)},r|u|\alphab^{(\bZ)}\kr)}\kr|^2\d\omega\d u\lesssim \varE[(\sigma,\alphab);2]\lesssim 1,\\
&\int_{-\infty}^{-1}\int_{\rS^2} |u|^{-2\chi_K(\bZ)-\varepsilon_1}\kl|\INF{(|u|D_\Lb)^{\leq 1}D_\Omega^{\leq 1}(r\phi^{(\bZ)})}\kr|^2\d\omega\d u\lesssim \varE[\phi;2]\lesssim 1,\\
&\int_{-\infty}^{-1}\int_{\rS^2} |u|^{-2\chi_K(\bZ)+1-2\varepsilon_1}\kl|\INF{vr\kl(|u|J^{(\bZ)}_\Lb,v\sJ^{(\bZ)}\kr)}\kr|^2\d\omega\d u\lesssim \varE[(J_\Lb,\sJ);2]\lesssim 1.
\end{Eq*}
These give the \eqref{Eq:Eo_FphiZ_oeni}.
Meanwhile, for $\chi(\bZ)\leq 1$, there are
\begin{Eq*}
&\sup_{u\in(-\infty,-1]}|u|^{-2\chi_K(\bZ)-1-\varepsilon_1}\int_{\rS^2}\kl|\INF{\kl(r|u|^2\alphab^{(\bZ)},r^2|u|(\rrho^{(\bZ)},\sigma^{(\bZ)}),r^3\alpha^{(\bZ)}\kr)}\kr|^2\d\omega\\
\lesssim&\varF[(\alphab,\rho,\sigma,\alpha);1]\lesssim 1.
\end{Eq*}
This gives the \eqref{Eq:Eo_FZ_onis}.
Moreover, using \eqref{Eq:key_Eob3_1}, for $\chi(\bZ)\leq 2$, we know
\begin{Eq*}
&\sup_{u\in(-\infty,-1]}|u|\int_{\rS^2}\kl|\INF{D_\Omega^{\leq 1}\kl(|u|^{-\chi_K(\bZ)-\frac{1}{2}\varepsilon_1}r\phi^{(\bZ)}\kr)}\kr|^2\d\omega\\
\lesssim&\int_{-\infty}^{-1}\int_{\rS^2} \kl|\INF{(|u|D_\Lb)^{\leq 1}D_\Omega^{\leq 1}\kl(|u|^{-\chi_K(\bZ)-\frac{1}{2}\varepsilon_1}r\phi^{(\bZ)}\kr)}\kr|^2\d\omega\d u\\
\lesssim&\int_{-\infty}^{-1}\int_{\rS^2} |u|^{-2\chi_K(\bZ)-\varepsilon_1}\kl|\INF{(|u|D_\Lb)^{\leq 1}D_\Omega^{\leq 1}(r\phi^{(\bZ)})}\kr|^2\d\omega\d u\lesssim 1.
\end{Eq*}
This and the \emph{Sobolev} inequality give \eqref{Eq:Eo_phiZ_onis}.
Now we finish the proof.

\subsection*{Acknowledgment}
The authors would like to thank the anonymous referee for the careful reading and valuable comments.
W. Dai is supported by the  Natural Science Foundation of Zhejiang province LQN25A010005.
S. Yang is supported by the National Key R\&D Program of China 2021YFA1001700 and the National Science Foundation of China 12171011,  12141102. 
He thanks Mihalis Dafermos for encouragement and helpful discussion on this problem.

\end{document}